\documentclass[reqno,oneside,12pt]{amsart}

\addtolength{\hoffset}{-1cm}
\addtolength{\textwidth}{2cm}
\addtolength{\voffset}{-0.5cm}
\addtolength{\textheight}{1cm}
\setlength{\marginparwidth}{1.5cm}
\usepackage[normalem]{ulem}

\usepackage[color=yellow!60!red,textsize=tiny,textwidth=18mm]{todonotes} 


\usepackage{amsmath,amsfonts,amssymb,amsthm}
\usepackage{MnSymbol}
\usepackage{tikz}
\usepackage{tikz-cd}
\tikzset{node distance=1.5cm, auto}
\usepackage{color}
\definecolor{darkgreen}{rgb}{0,0.45,0}
\usepackage[colorlinks,citecolor=darkgreen,linkcolor=darkgreen]{hyperref}
\usepackage[capitalize]{cleveref}
\usepackage{stmaryrd}
\usepackage{enumerate}
\usepackage[shortlabels]{enumitem}
\usepackage{mathtools}

\addtolength{\hoffset}{-1cm}
\addtolength{\textwidth}{2cm}
\addtolength{\voffset}{-1.5cm}
\addtolength{\textheight}{2.4cm}

\usepackage{graphicx}
\usepackage{enumerate}
\usepackage[all]{xy}
\usepackage{bbold}
\usepackage[latin9]{inputenc}


\newcommand{\epspi}{\epsilon^{\pi}}
\newcommand{\eps}{\epsilon}
\newcommand{\um}{_A}
\newcommand{\one}{1_A}

\newcommand{\Hom}{{\sf Hom}}

\newcommand{\End}{{\sf End}}

\newcommand{\are}{\ar@{-}}

\newcommand{\co}{\ar@{--}}


\newtheorem*{proposicao*}{Proposition}
\newtheorem*{teorema*}{Theorem}
\newtheorem*{corolario*}{Corollary}

\newtheorem{definicao}{Definition}[section]
\newtheorem{teorema}[definicao]{Theorem}
\newtheorem{exemplo}[definicao]{Example}

\newtheorem{lema}[definicao]{Lemma}
\newtheorem{obs}[definicao]{Remark}
\newtheorem{corolario}[definicao]{Corollary}
\newtheorem{proposicao}[definicao]{Proposition}


\newcommand{\K}{\mathbb{K}}

\newcommand{\Z}{\mathbb{Z}}

\newcommand{\larr}{\longrightarrow}
\newcommand{\lmap}{\longmapsto}
\newcommand{\x}{\otimes}

\newcommand{\Hpar}{H_{\textrm{par}}}



\newcommand{\Px}{\mathcal{P}}


\def\eps{\epsilon}




\newtheorem{prop}{Proposition}[section]

\newtheorem{theorem}[prop]{Theorem}

\theoremstyle{definition}

\newtheorem{definition}[prop]{Definition}

\newtheorem{example}[prop]{Example}

\newtheorem{remark}[prop]{Remark}

\newcommand{\benu}{\begin{enumerate}}
\newcommand{\enu}{\end{enumerate}}

\newcommand{\beqna}{\begin{eqnarray}}
\newcommand{\eqna}{\end{eqnarray}}
\newcommand{\beqnast}{\begin{eqnarray*}}
\newcommand{\eqnast}{\end{eqnarray*}}
\newcommand{\beqn}{\begin{equation}}
\newcommand{\eqn}{\end{equation}}
\newcommand{\beqnst}{\begin{equation*}}
\newcommand{\eqnst}{\end{equation*}}

\usepackage{latexsym}

\usepackage{xspace}
\usepackage{amscd}
\usepackage{amssymb}
\usepackage{amsfonts}

\makeatother

\newcommand{\bema}{\left ( \begin{array}}
\newcommand{\ema}{\end{array} \right )}



\newcommand{\apar}{A_{par}}

\newcommand{\calG}{\mathcal{G}}



\begin{document}

	\title[On partial representations of pointed Hopf algebras]{On partial representations of pointed Hopf algebras}

 \author[A. R.\ Alves Neto]{Arthur Rezende Alves Neto}
	\address{\textup{A. R. Alves Neto} Departamento de Matem\'atica, Universidade Tecnol\'ogica Federal do Paran{\'a}, Brazil}
    \email{arthurntcwb@yahoo.com.br}
	
	\author[M.\ M. Alves]{Marcelo Muniz Alves}
	\address{\textup{M.\ M. \ Alves} Departamento de Matem\'atica, Universidade Federal do Paran{\'a}, Brazil}
    \email{marcelomsa@ufpr.br}

 \thanks{This worked was partially financed by the Coordena\c{c}\~ao de Aperfei\c{c}oamento de Pessoal de N\'ivel Superior - Brasil (CAPES), Finance Code 001, and by the National Council for Scientific and Technological Development of Brazil (CNPq), grant 309469/2019-8. 
 \\ {\bf 2020 Mathematics Subject Classification}: 16T05, 16S40, 16S35.\\   {\bf Key words and phrases:} pointed Hopf algebra, partial representation, partial action, partial smash product} 
	
	\flushbottom
	
	\begin{abstract} 
		Partial representations of Hopf algebras were motivated by the theory of partial representations of groups. Alves, Batista e Vercruysse introduced partial representations of a Hopf algebra and showed that, as in the case of partial groups actions, a partial $H$-action on an algebra $A$ leads to a partial representation on the algebra of linear endomorphisms  of $A$, and a left module $M$ over the partial smash product of $A$ by $H$ carries also a partial representation of $H$ on its algebra of linear endomorphisms. Moreover, partial representations of  $H$ correspond to left modules over a Hopf algebroid $H_{par}$. It is known from a result by Dokuchaev, Exel and Piccione that when $H$ is the algebra of a finite group $G$, then $H_{par}$ is isomorphic to the algebra of a finite groupoid determined by $G$. 
        In this work we show that if $H$ is a pointed Hopf algebra with finite group $G$ of grouplikes then $H_{par}$ can be written as a direct sum of unital ideals indexed by the components of the same groupoid associated to the group $G$.
	\end{abstract}
	
	\maketitle
 
	\tableofcontents


	\section*{Introduction}
	\thispagestyle{empty}

Partial representations of Hopf algebras were motivated by the theory of partial representations of groups, which is turn a development of partial group actions.

Partial group actions on sets, topological spaces, semigroups, and algebras were introduced by Exel in the 1990's \cite{exel1994circle,E98}. In a partial action of $G$ on a $\K$-algebra $A$, to each element $g$ of $G$ is assigned an isomorphism between two ideals of $A$ such that the equality $\theta_g \theta_h = \theta_{gh}$ holds in the largest ideal where both sides make sense.  
Every partial action on an algebra gives rise to a $G$-graded algebra $A \ast_\alpha G$ that extends the construction of the usual skew group algebra \cite{DE05}; the modules over $A \ast_\alpha G$ carry a generalized version of a group representation known as a \textit{partial representation} of $G$ over the field $\K$.  It was shown in \cite{piccione} that the former is the same as a left module over the \textit{partial group algebra }
$\K_{par} G $, which is a semigroup algebra defined by generators and relations. In this same paper it is shown that this algebra is isomorphic to the algebra of a explicitly constructed groupoid $\calG(G)$ associated to the group $G$. We should also mention the important fact that $\K_{par} G $ is isomorphic to a skew group algebra:  $\K_{par} G $ contains a commutative subalgebra $A$ on which $G$ acts partially, and  $\K_{par} G \simeq A *_\alpha G$ \cite{DE05}. 

Partial actions and coactions of Hopf algebras on algebras were originally introduced by Caenepeel, de Groot and Janssen \cite{caenepeel2004galois, caenepeel2008partial} as a means of applying the theory of Galois extensions via corings to the theory of partial Galois extensions developed by Dokuchaev, Ferrero and Paques \cite{ferrero2007}.
As in the case of groups, a partial action of a Hopf algebra $H$ on an algebra $A$ leads to an $H$-comodule algebra, the partial smash product $\underline{A \# H}$, which generalizes the smash product associated to a (global) action, and also extends the skew group algebra construction, since a $\K G$-partial action on $A$ is the same as a unital partial $G$-action, and in this case  $\underline{A \# \K G} \simeq A *_\alpha G$  as algebras.

Partial representations of Hopf algebras were  introduced by Alves, Batista e Vercruysse in \cite{alves2015partial}. There they showed that, as in the case of groups, a partial $H$-action on an algebra $A$ leads to a partial representation on the algebra $End(A)$ of linear endomorphisms of $A$, and that a left $\underline{A \# H}$-module also carries a partial representation of $H$ on $End(M)$. In the same paper, a construction was presented that associates another algebra, denoted as $\Hpar$, with each Hopf algebra $H$. This new algebra is called the ``partial Hopf algebra'' associated with $H$ and, by construction, partial representations of $H$ correspond to left  $\Hpar$-modules. Note that there is a canonical algebra isomorphism $\K_{par} G \simeq (\K G)_{par}$. 

A fundamental feature of this construction is that $\Hpar$ is a \textit{Hopf algebroid}, which accommodates nicely the previous theory of partial group representations since a groupoid algebra is a weak Hopf algebra, which is then a special case of a Hopf algebroid. The Hopf algebroid structure comes with a subalgebra  $\apar$, the ``base algebra'',  which carries a canonical partial $H$-action. Extending what was already known for partial group representations, we have an isomorphism of algebras $\underline{\apar \# H} \simeq \Hpar$. 

The Hopf algebroid $\Hpar$ is a functor from the category of Hopf $\K$-algebras to the category of Hopf algebroids. The behaviour of the construction is quite wild: there may happen that the construction collapses and $\Hpar$ is a Hopf algebra isomorphic to $H$, which is the case of enveloping algebras of Lie algebras; on the other extreme, $H$ might be finite-dimensional but $\Hpar$ might not be, which is the case of the Sweedler algebra \cite{alves2015partial}.  

We recall briefly some of the main constructions and results regarding the structure of the algebra $\K_{par}G$ of a finite group $G$. The partial Hopf algebra $\K_{par}G \simeq (\K G)_{\textrm{par}}$ of a finite group $G$ is isomorphic to the algebra of a groupoid $\calG(G)$ whose set of objects is  the set $\Px_1(G)=\{X \subseteq G \mid e \in X\}$ \cite[Thm 2.6]{piccione}. 
Now, if $\calG$ is \textit{any} finite groupoid with components $\calG_i$, $i = 1, \ldots, n$,  then $\K \calG \simeq \bigoplus_{i=1}^n\K \calG_i$; if we chose one object $X_i$ for each component $\calG_i$, with stabilizer $G_i$, then 
\[
\K  \calG \simeq \bigoplus_{i=1}^n Mat_{m_i} (\K G_i) ,
\] 
where $m_i $ is the cardinality of the set of objects of $\calG_i$.

Particularizing to the groupoid $\calG (G)$, let us say that two elements $X, Y$ of its object set $\Px_1(G)$ are equivalent, denoting this relation by $X \sim Y$, if they lie in the same component of $\calG(G)$. Then $X \sim Y$ if an only if there exists $g\in G$, such that $g^{-1}\in X$ and $gX = Y$.
Let $X_1, X_2, ..., X_n$ be a complete set of representatives of the equivalence classes $\mathcal{O}_1, \mathcal{O}_2, \ldots , \mathcal{O}_n$ in $\mathcal{P}_1(G)$. The stabilizer of $X_i$ is isomorphic to the group $ G_i = \{g \in G \mid g\cdot X_i = X_i\}$. Hence, letting $m_i = \mid\mathcal{O}_i\mid$, we have the algebra isomorphisms
\[
\K_{par}G 
\simeq \K \calG(G) 
\simeq \bigoplus_{i=1}^n\K \calG(G)_i \simeq \bigoplus_{i=1}^n Mat_{m_i} (\K G_i).
\]
In \cite[Thm 3.2]{piccione} it is shown that each subgroup $L$ of $G$ appears as a stabilizer of an object of $\calG(C)$ and that the multiplicity of $Mat_m(\K L)$ in the above decomposition can be determined via a recursive formula.

In this work we build upon the structure results for $\K_{par} G$ and we provide a block decomposition of the algebra $\Hpar$ when $H$ is a pointed Hopf algebra with a finite group of grouplike elements $G = G(H)$. More precisely, we show that the algebra $H_{par}$ still can be decomposed as a direct sum of unital ideals indexed by the equivalence classes of $\Px_1(G)$. 

The main stepping stone that leads from the structure of $\K_{par}(G)$ to the structure of the algebra $\Hpar$ of partial representations of a pointed Hopf algebra $H$, with $G$ as its group of grouplikes, is a property of idempotents in the convolution algebra $Hom(C,A)$, where $C$ is a coalgebra and $A$ is an algebra: If $f,g$ are convolution idempotents such that $f*g = g*f = g$ and $f(c) = g(c)$ for every $c \in H_0$ then $f = g$ (Theorem \ref{teorema.idempotent.convolution.coradical}). We mention that this result is also explored in \cite{arthur2024partial} as a means of deciding whether there exist strictly partial $H$-representations. 

The other main technical device utilized in this work comes from the algebra isomorphism 
$\Hpar \simeq \underline{\apar \# H}$. Studying $\Hpar$ directly can get unwieldy pretty fast, so the description of $\Hpar$ as a direct sum comes essentially from the structure of the subalgebra $A_{par}$. In order to do this,  We investigate the structure of an arbitrary partial left $H$-module algebra $A$. 

Given a pointed Hopf algebra $H$ with $G$ as group of grouplikes, let $A$ be a partial $H$-module algebra. Following ideas of  \cite{piccione}, We define idempotents $P^A_X$, where $X \in \Px_1(G)$, and we show that these form a complete set of  orthogonal central idempotents of $A$. Therefore,  we obtain a first decomposition (Proposition \ref{proposicao.A.as.PxA.sommation})
\begin{equation} \label{eq:A.decomposition.PX}
A = \bigoplus_{X \in \Px_1(G)} A P^A_X. 
    \end{equation}

Those components are not stable under the partial $H$-action, and thus \eqref{eq:A.decomposition.PX} does not lift canonically to a direct sum decomposition of $\Hpar$. Using the comparison of convolution idempotents via their restriction to the coradical $H_0 = \K G$, we show that if $X_1, \ldots, X_k$ are representatives of the equivalence classes of $\Px_1(G)$ then each element
    $$ \Gamma^A_{X_k} =  \sum_{Y \sim X_k} P_{Y}^A
    $$
is a central idempotent element of $A$, the set of all $\Gamma^A_{X_k}$ is a complete set of orthogonal idempotents for $A$, and each $\Gamma_{X_k}^A$ satisfies
    $ h\cdot \Gamma^A_{X_k} = (h\cdot 1\um)\Gamma^A_{X_k}, $
    for all  $h\in H$. It then  follows directly that 
\begin{equation} \label{eq:A.decomposition.GammaX}
    A = \bigoplus_{k=1}^n A\Gamma^A_{X_k}, 
\end{equation} 
where each unital ideal $A\Gamma^A_{X_k}$ is $H$-invariant (Theorem \ref{teorema.A.GammaA.direct.somma}). 

Particularizing this decomposition to $A = \apar$ and utilizing the algebra isomorphism $\Hpar \simeq \apar \underline{\#}H$ we obtain the main result of this paper.
\begin{teorema*}[ Thm \ref{proposicao.pointed.Hpar.direct.sum}]
      Let $H$ be a pointed Hopf algebra with invertible antipode and finite group $G$ of grouplike elements.
      If $X_1, X_2, ..., X_n$ constitute a complete set of representatives of the equivalence classes $\mathcal{O}_1, \mathcal{O}_2, \ldots , \mathcal{O}_n$ in $\mathcal{P}_1(G)$ then
      \begin{equation} \label{eq:Hpar.direct.sum}
       \Hpar = \bigoplus_{k=1}^{n} \Hpar\Gamma^A_{X_k},    
      \end{equation}
    where $\Hpar\Gamma^A_{X_k}$ is an ideal of $\Hpar$ for all $k=1,2, ..., n$.
\end{teorema*}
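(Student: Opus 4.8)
The plan is to deduce \eqref{eq:Hpar.direct.sum} from the decomposition \eqref{eq:A.decomposition.GammaX}, specialized to the base algebra $A = \apar$, by transporting it through the algebra isomorphism $\Hpar \simeq \underline{\apar \# H}$. Recall that $\apar$ carries a canonical partial $H$-action and that, under this isomorphism, $\apar$ is identified with the subalgebra $\{a \x 1_H : a \in \apar\}$ of $\underline{\apar \# H}$ (indeed $(a \x 1_H)(1\um \x 1_H) = a(1_H\cdot 1\um)\x 1_H = a\x 1_H$ lies in $\underline{\apar \# H}$, whose unit is $1\um \x 1_H$). Write $\Gamma_k := \Gamma^A_{X_k} \in \apar$ for the idempotents furnished by Theorem~\ref{teorema.A.GammaA.direct.somma} applied with $A = \apar$: they are central in $\apar$, pairwise orthogonal, they sum to $1\um$, and $h\cdot\Gamma_k = (h\cdot 1\um)\Gamma_k$ for all $h\in H$. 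The strategy is to prove that the elements $\Gamma_k\x 1_H$ form a complete system of orthogonal \emph{central} idempotents of $\underline{\apar\#H}$. Once this is established, $\underline{\apar\#H} = \bigoplus_{k=1}^n \underline{\apar\#H}(\Gamma_k\x 1_H)$ with each summand a unital two-sided ideal, and pulling back along $\Hpar\simeq\underline{\apar\#H}$ gives \eqref{eq:Hpar.direct.sum}, with $\Hpar\Gamma^A_{X_k}$ the image of $\underline{\apar\#H}(\Gamma_k\x 1_H)$.

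That the $\Gamma_k\x 1_H$ are orthogonal idempotents summing to $1\um \x 1_H$ follows immediately from the corresponding facts about the $\Gamma_k$ in $\apar$, since $(\Gamma_i\x 1_H)(\Gamma_j\x 1_H) = \Gamma_i(1_H\cdot\Gamma_j)\x 1_H = \Gamma_i\Gamma_j\x 1_H$. The essential point is centrality. A generic element of $\underline{\apar\#H}$ is a sum of terms $\underline{a \# h} = a(h_{(1)}\cdot 1\um)\x h_{(2)}$, and the product in the partial smash is $(u\x g)(w\x l) = u(g_{(1)}\cdot w)\x g_{(2)}l$; using these and the identity $h\cdot\Gamma_k = (h\cdot 1\um)\Gamma_k$ one computes
\begin{align*}
(\underline{a \# h})(\Gamma_k\x 1_H) &= a(h_{(1)}\cdot 1\um)(h_{(2)}\cdot\Gamma_k)\x h_{(3)} \\
&= a(h_{(1)}\cdot 1\um)(h_{(2)}\cdot 1\um)\Gamma_k\x h_{(3)} \\
&= a(h_{(1)}\cdot 1\um)\Gamma_k\x h_{(2)},
\end{align*}
the last step using the partial-action relation $(h_{(1)}\cdot 1\um)(h_{(2)}\cdot 1\um) = h\cdot 1\um$ and coassociativity. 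On the other hand $(\Gamma_k\x 1_H)(\underline{a \# h}) = \Gamma_k\bigl(1_H\cdot a(h_{(1)}\cdot 1\um)\bigr)\x h_{(2)} = \Gamma_k\, a(h_{(1)}\cdot 1\um)\x h_{(2)}$, which equals the previous expression because $\Gamma_k$ is central in $\apar$. Hence $\Gamma_k\x 1_H$ is central in $\underline{\apar\#H}$.

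With centrality in hand, $\underline{\apar\#H}(\Gamma_k\x 1_H)$ is a unital two-sided ideal, the sum over $k$ is direct by orthogonality and exhausts $\underline{\apar\#H}$ by completeness, and transporting everything along the isomorphism $\Hpar \simeq \underline{\apar\#H}$ finishes the proof. I expect the genuine difficulty of the whole chain of results to lie upstream, in Theorem~\ref{teorema.A.GammaA.direct.somma} and the convolution-idempotent comparison Theorem~\ref{teorema.idempotent.convolution.coradical} behind it; at this final stage the only thing needing care is that the $\Gamma_k\x 1_H$ be \emph{central}, not merely idempotent — which is exactly the place where the $H$-invariance property $h\cdot\Gamma_k = (h\cdot 1\um)\Gamma_k$ of the $\Gamma_k$ is indispensable, in contrast with the individual $P^A_X$, whose summands in \eqref{eq:A.decomposition.PX} are not $H$-stable.
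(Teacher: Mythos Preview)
Your proof is correct and follows essentially the same approach as the paper: specialize Theorem~\ref{teorema.A.GammaA.direct.somma} to $A=\apar$, verify that each $\Gamma_{X_k}\otimes 1_H$ is a central idempotent of $\underline{\apar\#H}$ using the key identity $h\cdot\Gamma_{X_k}=(h\cdot 1\um)\Gamma_{X_k}$ together with centrality of $\Gamma_{X_k}$ in $\apar$, and then transport the resulting decomposition through the isomorphism $\Hpar\simeq\underline{\apar\#H}$. The paper's argument and yours are in substance the same, differing only in notation and in the order of the verifications.
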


We then return to the structure of the subalgebra $A = \apar$. Via its universal property, we prove that $A P^A_X \simeq A P^A_{G_X}$ as algebras for every $X \in \mathcal{P}_1(G)$ (Theorem \ref{teorema.APX.simeq.APXG}), and also that $A P^A_X \simeq A P^A_Y$ as algebras  whenever  $X$ and $Y$ lie in the same component (Corollary \ref{cor:AP_X=AP_{gX}}). We then are able to refine the description of $A$ as a direct sum.

        \begin{corolario*}[Cor. \ref{cor:decomposicao.Apar.multiplicidades}]
        Consider a pointed Hopf algebra $H$ with finite group of grouplike elements $G$ and invertible antipode. If $A = \apar$, then
        \begin{equation}
            A \cong \bigoplus_{L \leq G} q(G,L)AP^A_L, 
        \end{equation}   
        where $q(G,L)AP^A_L$ denotes the direct sum of $q(G,L)$ copies of $AP^A_L$, and $q(G,L)$ is the cardinality of the set $\{X \in \Px_1(G) \mid G_X \text{ is conjugate to } L\}$.
    \end{corolario*}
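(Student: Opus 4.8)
The plan is to start from the block decomposition of $A=\apar$ indexed by $\Px_1(G)$ and to merge the blocks in two stages: first according to the stabilizer subgroup $G_X$, and then according to its conjugacy class. The first merge will be provided by the algebra isomorphism $AP^A_X\cong AP^A_{G_X}$, and the second by the functoriality of the assignment $H\mapsto\Hpar$.

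In detail, I would first invoke Proposition~\ref{proposicao.A.as.PxA.sommation} to get the finite direct sum $A=\bigoplus_{X\in\Px_1(G)}AP^A_X$ (finite since $G$, and hence $\Px_1(G)$, is finite). For each $X$, Theorem~\ref{teorema.APX.simeq.APXG} gives an algebra isomorphism $AP^A_X\cong AP^A_{G_X}$, where $G_X=\{g\in G\mid gX=X\}\le G$. Setting $p(G,L)=|\{X\in\Px_1(G)\mid G_X=L\}|$ for a subgroup $L\le G$ and grouping the summands by the value of the stabilizer (every subgroup occurs, since $G_L=L$ shows $p(G,L)\ge 1$), one obtains
\[
A\ \cong\ \bigoplus_{L\le G}p(G,L)\,AP^A_L .
\]

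The second stage rests on establishing that the isomorphism type of $AP^A_L$ depends only on the conjugacy class of $L$, i.e. $AP^A_L\cong AP^A_{tLt^{-1}}$ for every $t\in G$. Here I would use that conjugation by the grouplike element $t$ is an inner automorphism $\phi_t\colon H\to H$ of Hopf algebras, restricting to $g\mapsto tgt^{-1}$ on $G=G(H)$. Since $\Hpar$ is functorial in $H$ and sends a Hopf-algebra isomorphism to an isomorphism of Hopf algebroids, $\phi_t$ induces an algebra automorphism of $\apar=A$; as each idempotent $P^A_X$ is assembled from the idempotents attached to the grouplikes lying in $X$, this automorphism carries $P^A_X$ to $P^A_{tXt^{-1}}$, hence restricts to an algebra isomorphism $AP^A_X\cong AP^A_{tXt^{-1}}$ for every $X$, and taking $X=L$ proves the claim. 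Finally I would pick a representative $L$ for each conjugacy class $[L]$ of subgroups of $G$ and regroup the previous display using these isomorphisms, obtaining
\[
A\ \cong\ \bigoplus_{[L]}\Big(\sum_{L'\in[L]}p(G,L')\Big)AP^A_L ,
\]
where the coefficient of $AP^A_L$ equals $|\{X\in\Px_1(G)\mid G_X\ \text{is conjugate to}\ L\}|=q(G,L)$; this is the asserted decomposition.

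The step I expect to be the main obstacle is the conjugation isomorphism $AP^A_L\cong AP^A_{tLt^{-1}}$. It cannot be deduced from Corollary~\ref{cor:AP_X=AP_{gX}}, because when $tLt^{-1}\ne L$ the subsets $L$ and $tLt^{-1}$ lie in \emph{different} components of $\calG(G)$ — a left coset $tX$ can be a subgroup only if it equals $X$. So the real content is the behaviour of $\apar$ (equivalently of $\Hpar\cong\underline{\apar\#H}$) under the functor $H\mapsto\Hpar$, and the delicate point is to verify that the automorphism induced by $\phi_t$ permutes the central idempotents $P^A_X$ exactly by $X\mapsto tXt^{-1}$.
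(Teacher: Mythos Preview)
Your proposal is correct and is exactly the paper's route: the conjugation argument you describe is precisely the proof of Corollary~\ref{cor:AP_X=AP_{gX}}(1), where the Hopf automorphism $h\mapsto tht^{-1}$ induces an automorphism $c'$ of $\Hpar$ (hence of $A$) with $c'(\eps_h)=\eps_{tht^{-1}}$ and $c'(P_X)=P_{tXt^{-1}}$, and that argument is valid for \emph{every} $t\in G$ --- the hypothesis $g^{-1}\in X$ in the corollary is only used for the identification $gG_Xg^{-1}=G_{gX}$ and for part~(2), not for the isomorphism $AP_{G_X}\cong AP_{gG_Xg^{-1}}$ itself. So the obstacle you anticipate does not arise, and combining Proposition~\ref{proposicao.A.as.PxA.sommation}, Theorem~\ref{teorema.APX.simeq.APXG}, and this conjugation isomorphism yields the result just as you outline.
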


In the third and last section of this paper we study three cases of the direct sum decomposition \eqref{eq:Hpar.direct.sum}. In the first one we review the partial group  algebra $\K_{par}G$. The next two cases are examples of the Hopf algebras of rank one introduced by Krop and Radford in \cite{krop2006finite}. We describe vector bases of the components of two such Hopf algebras, one of the nilpotent class, the other of the non-nilpotent class, which share the same group of grouplike elements and same dimension.


	\section{Preliminaries}
	\label{se:prelim}
	
	In this section we will recall the basic notions about partial representations of Hopf algebras and the coradical filtration.
	
	\subsection{Partial representations of Hopf algebras}
	
	\begin{definition}[\cite{alves2015partial}]\label{definicao.representacao.parcial}
		A partial representation of \(H\) on an algebra \(B\) is a linear map \(\pi : H \to B\) such that for all \(h, k \in H\)
		\begin{enumerate}[(PR1)]
			\item \(\pi(1_H) = 1_B\); \label{PR1}
			\item \(\pi(h) \pi(k_{(1)}) \pi(S(k_{(2)})) = \pi(hk_{(1)}) \pi(S(k_{(2)}))\); \label{PR2}
			\item \(\pi(h_{(1)}) \pi(S(h_{(2)})) \pi(k) = \pi(h_{(1)}) \pi(S(h_{(2)})k)\). \label{PR3}
		\end{enumerate}
		By \cite[Lemma 3.3]{alves2021partial}, axioms \ref{PR2} and \ref{PR3} can be replaced with
		\begin{enumerate}[(PR1)]
			\setcounter{enumi}{3}
			\item \(\pi(h) \pi(S(k_{(1)})) \pi(k_{(2)}) = \pi(hS(k_{(1)})) \pi(k_{(2)})\); \label{PR4}
			\item \(\pi(S(h_{(1)})) \pi(h_{(2)}) \pi(k) = \pi(S(h_{(1)}))\pi(h_{(2)}k)\). \label{PR5}
		\end{enumerate}
	\end{definition}
	If \(B = \End_k(M)\) for some \(k\)-vector space \(M,\) then we call \(M\) a left partial \(H\)-module. 
	From the axioms it follows immediately that an algebra morphism \(\pi: H \to B\) is a partial representation. We will often refer to those as \textit{global representations} of \(H\) on \(B\). 
	
	It turns out that the partial representations of \(H\) factor uniquely through the partial ``Hopf'' algebra \(H_{par}\). This algebra is constructed as the quotient of the tensor algebra \(T(H)\) by the relations
	\begin{gather}
		1_H = 1_{T(H)}; \label{eq:Hparrelation1} \\
		h \otimes k_{(1)} \otimes S(k_{(2)}) = hk_{(1)} \otimes S(k_{(2)}); \label{eq:Hparrelation2}\\
		h_{(1)} \otimes S(h_{(2)}) \otimes k = h_{(1)} \otimes S(h_{(2)})k; \label{eq:Hparrelation3}
	\end{gather}
	for all \(h, k \in H\). The class of \(h \in H\) in \(H_{par}\) is denoted as \([h]\).
	It is easy to see that the linear map 
	\begin{equation}
		\label{eq:bracket}
		[-] : H \to H_{par} : h \mapsto [h]
	\end{equation}
	is a partial representation. In the other direction there is an algebra map
	\begin{equation}
		\label{eq:removing_brackets}
		H_{par} \to H : [h_1] \cdots [h_n] \mapsto h_1 \cdots h_n.
	\end{equation}
	
	The algebra \(H_{par}\) satisfies the following universal property.
	\begin{theorem}[{\cite[Theorem 4.2]{alves2015partial}}]
		\label{th:universal_property}
		For every partial representation \(\pi : H \to B\) there is a unique algebra morphism \(\hat{\pi} : H_{par} \to B\) such that \(\pi = \hat{\pi} \circ [-]\). Conversely, given an algebra morphism \(\varphi : H_{par} \to B,\) the map \(\varphi \circ [-] : H \to B\) is a partial representation.
	\end{theorem}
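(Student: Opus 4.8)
The plan is to leverage the fact that $H_{par}$ is, by construction, the quotient of the tensor algebra $T(H)$ by the two-sided ideal $I$ generated by the relations \eqref{eq:Hparrelation1}--\eqref{eq:Hparrelation3}, and then transfer the universal property of $T(H)$ across this quotient. The starting point is the universal property of the tensor algebra: every $\K$-linear map $\pi : H \to B$ into a unital associative algebra $B$ extends to a unique unital algebra morphism $T(\pi) : T(H) \to B$, given on a pure tensor $h_1 \otimes \cdots \otimes h_n$ by $\pi(h_1)\cdots\pi(h_n)$ (and on the degree-zero part by $1_{T(H)} \mapsto 1_B$). The whole question thus reduces to deciding for which linear maps $\pi$ the morphism $T(\pi)$ annihilates $I$.

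The second step is to verify that if $\pi$ is a partial representation then $I \subseteq \Ker T(\pi)$; since $\Ker T(\pi)$ is a two-sided ideal, it suffices to check this on the generators of $I$. Relation \eqref{eq:Hparrelation1} maps under $T(\pi)$ to $\pi(1_H) - 1_B$, which vanishes by \ref{PR1}; relation \eqref{eq:Hparrelation2} maps to $\pi(h)\pi(k_{(1)})\pi(S(k_{(2)})) - \pi(hk_{(1)})\pi(S(k_{(2)}))$, which vanishes by \ref{PR2}; and relation \eqref{eq:Hparrelation3} maps to $\pi(h_{(1)})\pi(S(h_{(2)}))\pi(k) - \pi(h_{(1)})\pi(S(h_{(2)})k)$, which vanishes by \ref{PR3}. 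Hence $T(\pi)$ factors through the canonical projection $q : T(H) \to H_{par}$, producing a unital algebra morphism $\hat{\pi} : H_{par} \to B$; evaluating on degree-one tensors gives $\hat{\pi}([h]) = \pi(h)$ for all $h$, that is, $\pi = \hat{\pi} \circ [-]$.

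For uniqueness, I would use that $H_{par}$ is generated as an algebra by $\{[h] : h \in H\}$, since $T(H)$ is generated by $H$ and $q$ is surjective: any algebra morphism $H_{par} \to B$ that agrees with $\pi$ on every $[h]$ must coincide with $\hat{\pi}$. For the converse statement, given an algebra morphism $\varphi : H_{par} \to B$ I would simply note that $\varphi \circ [-]$ is a partial representation, because $[-]$ already is one (as recorded just after \eqref{eq:bracket}) and $\varphi$, being a unital algebra morphism, preserves units and products and therefore carries each of the identities \ref{PR1}--\ref{PR3} for $[-]$ to the corresponding identity for $\varphi \circ [-]$.

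I do not expect a genuine obstacle here: this is the standard ``presentation by generators and relations'' argument. The single point worth a second look is relation \eqref{eq:Hparrelation1}, which identifies the element $1_H$ --- sitting inside $T(H)$ as a degree-one tensor --- with the genuine unit $1_{T(H)}$ of the tensor algebra. Since $T(\pi)$ automatically sends $1_{T(H)}$ to $1_B$, demanding that \eqref{eq:Hparrelation1} lie in $\Ker T(\pi)$ is precisely the normalization axiom \ref{PR1}; this is why \eqref{eq:Hparrelation1} must be included among the defining relations of $H_{par}$, rather than merely working with the non-unital free algebra on $H$.
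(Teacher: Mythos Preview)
Your proof is correct and is the standard generators-and-relations argument one expects here. The paper itself does not supply a proof of this statement; it merely cites \cite[Theorem 4.2]{alves2015partial}, so there is nothing further to compare against.
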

	This tells us that there is a bijective correspondence between partial representations of \(H\) and representations of \(H_{par}\), and that the category of left partial \(H\)-modules is isomorphic to the category of left \(H_{par}\)-modules.

An essential tool in understanding the algebra structure of \( H_{par}\) is its description as a \textit{partial smash product}, a construction introduced in \cite{caenepeel2008partial} associated to left partial actions. 

\begin{definition}[\cite{alves2015partial}]\label{definicao.acao.parcial} A \textit{left partial action} of a Hopf algebra $H$ on an algebra $A$ is a linear map \(H \otimes A \to A\), given by $h \otimes a \mapsto h \cdot a$, such that
\begin{itemize}
    \item[(PA$1$)]\label{A} $1_H\cdot a = a$,
    \item[(PA$2$)] $h\cdot (ab) = (h_{(1)} \cdot a)(h_{(2)} \cdot b)$,
    \item[(PA$3$)] $h\cdot (k\cdot a) = (h_{(1)} \cdot 1_A)(h_{(2)} k\cdot a)$,
\end{itemize}
for all $a, b\in A$ and $h,k \in H$. The algebra $A$ is called a \textit{partial left $H$-module algebra}. A left partial action is \textit{symmetric} if in addition, it satisfies
\begin{itemize}
    \item[(PA$4$)] $h\cdot (k\cdot a) = (h_{(1)} k\cdot a)(h_{(2)} \cdot 1_A)$, 
\end{itemize}
for all $h,k \in H$ and $a\in A$.
In this paper \textit{every partial action will be symmetric}.

     Let $A$ be a partial $H$-module algebra. The partial action endows \(A \otimes H\) with the associative product 
     $$ (a\x h)(b\x k) = a(h_{(1)}\cdot b)\x h_{(2)} k $$
     and the left ideal  $\underline{A \# H} = (A\x H)(1_A\x 1_H)$ generated by the idempotent $1_A \otimes 1_H$ is a unital algebra. As a vector space, $\underline{A \# H}$ is generated by elements of the form
$$ a\# h = a(h_{(1)} \cdot 1_A)\x h_{(2)} $$
for \(a \in A\) and \(h \in H\).
This algebra is called the \textit{partial smash product} of $A$ by $H$ \cite{caenepeel2008partial}.
\end{definition}

\begin{exemplo}\label{exemplo.A.symmetric.EndA.partial}\cite{alves2015partial}
Let $A$ be a symmetric partial left $H$-module algebra, and let $B = \textrm{End}(A)$. Then the map $\pi:H \to B$, given by $\pi(h)(a)=h\cdot a$ is a partial representation. Also, if we consider the partial smash product $\underline{A \# H}$, the map $\eta:H \to \underline{A \# H}$, given by $\eta(h) = 1_A\# h$ is a partial representation.
\end{exemplo}

	 \(H_{par}\) itself can be described as a partial smash product. Consider the subalgebra \(\apar\) of \( H_{par}\) which is generated by the elements of the form
	\begin{equation}
		\label{eq:epsilonh}
		\varepsilon_h = [h_{(1)}] [S(h_{(2)})] \qquad \text{for } h \in H.
	\end{equation}
There is a canonical partial action of $H$ on the algebra \(\apar\), and in fact the algebra \(H_{par}\) is isomorphic to the corresponding partial smash product.

	\begin{theorem}[{\cite[Theorems 4.8 and 4.10]{alves2015partial}}] \label{teorema.Hpar.igual.AH}
		\begin{enumerate}[(i)] \ \
			 \item There is a partial action of \(H\) on \(\apar\) defined by
			\begin{equation*}
				h \cdot a := [h_{(1)}] a [S(h_{(2)}]
			\end{equation*}
			for \(h \in H, a \in A\).
			\item The partial ``Hopf'' algebra \(H_{par}\) is isomorphic to the partial smash product algebra \(\underline{\apar \# H}\). The partial representation \eqref{eq:bracket} becomes
			\begin{equation}
				\label{eq:bracket_smash}
				H \to \underline{\apar \# H} : h \mapsto 1_{\apar} \# h.
			\end{equation}
        \item If \(H\) has invertible antipode, then $H_{par}$ has the structure of a Hopf algebroid over the subalgebra $\apar$.
		\end{enumerate}
	\end{theorem}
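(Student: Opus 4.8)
All three parts rest on a handful of straightening identities in $H_{par}$ that come straight from the defining relations \eqref{eq:Hparrelation1}--\eqref{eq:Hparrelation3}, i.e. from applying axioms \ref{PR2}--\ref{PR5} to the universal partial representation $[-]$. Writing $\varepsilon_h = [h_{(1)}][S(h_{(2)})]$ as in \eqref{eq:epsilonh}, axioms \ref{PR2} and \ref{PR3} read
\[
[h]\,\varepsilon_k = [h k_{(1)}][S(k_{(2)})], \qquad \varepsilon_h\,[k] = [h_{(1)}][S(h_{(2)}) k],
\]
and \ref{PR4}, \ref{PR5} are their antipode-reversed counterparts. The first things I would prove, by a counit computation, are the reduction identities
\[
[h_{(1)}][S(h_{(2)})][h_{(3)}] = [h], \qquad [S(h_{(1)})][h_{(2)}][S(h_{(3)})] = [S(h)],
\]
the commutativity of the subalgebra $\apar$, and, most importantly, the commutation rule
\[
[h]\,a = (h_{(1)}\cdot a)\,[h_{(2)}] \quad (a\in\apar), \qquad \text{where } h\cdot a := [h_{(1)}]\,a\,[S(h_{(2)})].
\]
The commutation rule I would check on the generators $a=\varepsilon_g$ of $\apar$ (this is the one genuinely fiddly computation, using \ref{PR2}--\ref{PR4} together with the antipode axiom) and then extend multiplicatively with the help of the reduction identities. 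In particular this shows $h\cdot\varepsilon_g\in\apar$, so $h\cdot(-)$ preserves $\apar$.

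For (i), well-definedness of $h\cdot(-):\apar\to\apar$ is then immediate, and the partial-action axioms collapse to bracket identities provable from \ref{PR2}--\ref{PR5}. Axiom (PA1) is $1_H\cdot a = [1_H]a[1_H] = a$ by \eqref{eq:Hparrelation1}, while (PA2) and the two associativity axioms (PA3), (PA4) follow by expanding $h\cdot(-)$ and repeatedly applying the commutation rule and the reduction identities. The symmetry hypothesis is exactly what secures (PA4), hence the associativity of the smash product that is used in (ii).

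For (ii), Example~\ref{exemplo.A.symmetric.EndA.partial} tells us that $\eta:H\to\underline{\apar\#H}$, $h\mapsto 1_{\apar}\#h$, is a partial representation, so the universal property (Theorem~\ref{th:universal_property}) yields a unique algebra map $\Phi:H_{par}\to\underline{\apar\#H}$ with $\Phi([h]) = 1_{\apar}\#h$; a direct computation in the smash product gives $\Phi(\varepsilon_h)=\varepsilon_h\#1_H$. For the inverse I would set $\Psi(a\#h) = a\,[h]$ and verify it respects the multiplication $(a\#h)(b\#k) = a(h_{(1)}\cdot b)\#h_{(2)}k$; this is precisely where the commutation rule and \ref{PR5} enter, since
\[
\Psi(a\#h)\Psi(b\#k) = a[h]b[k] = a(h_{(1)}\cdot b)[h_{(2)}][k] = a(h_{(1)}\cdot b)[h_{(2)}k] = \Psi\big((a\#h)(b\#k)\big).
\]
One then checks that $\Phi\circ\Psi$ and $\Psi\circ\Phi$ are the identity on the generating sets $\{a\#h\}$ and $\{[h]\}$ using the reduction identity, and $\Phi([h]) = 1_{\apar}\#h$ is exactly \eqref{eq:bracket_smash}.

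For (iii), assuming $S$ invertible, I would equip $H_{par}$ with the structure maps of a Hopf algebroid over $\apar$: the source is the inclusion $\apar\hookrightarrow H_{par}$, the target is built from that inclusion twisted by $S$, the comultiplication and counit are determined on generators by $\Delta([h]) = [h_{(1)}]\otimes_{\apar}[h_{(2)}]$ and $[h]\mapsto h\cdot 1_{\apar} = \varepsilon_h$, and the antipode by $[h]\mapsto[S(h)]$. The work is to check that each of these respects \eqref{eq:Hparrelation1}--\eqref{eq:Hparrelation3}, so is well defined, and then to verify the bialgebroid and antipode axioms on the generators $[h]$; invertibility of $S$ is exactly what is needed to produce a well-defined antipode over the (in general noncommutative) base $\apar$. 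The main obstacle throughout is the commutation rule $[h]a = (h_{(1)}\cdot a)[h_{(2)}]$ and the stability $h\cdot\apar\subseteq\apar$: these are the load-bearing facts on which both (i) and (ii) rest, and getting the Sweedler bookkeeping right in their proof is the crux. I expect the Hopf algebroid verification in (iii) to be the most laborious but, once the base, source/target and the $S$-dependent antipode are correctly identified, the least conceptually risky, following the standard bialgebroid template.
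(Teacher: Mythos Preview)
The paper does not prove this theorem: it is stated in the preliminaries section with a citation to \cite[Theorems 4.8 and 4.10]{alves2015partial}, and no argument is given in the present paper. There is therefore no in-paper proof to compare your proposal against; your outline is broadly the standard route taken in the cited reference (universal property to build $\Phi$, the explicit inverse $\Psi(a\#h)=a[h]$, and a direct verification of the Hopf algebroid axioms on generators).

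One caution about your sketch: you list ``the commutativity of the subalgebra $\apar$'' among the preliminary facts you would prove, but $\apar$ is \emph{not} commutative in general, and nothing in parts (i)--(iii) requires it. What the argument actually needs is only the commutation rule $[h]\,a=(h_{(1)}\cdot a)[h_{(2)}]$ and the stability $h\cdot\apar\subseteq\apar$, both of which you correctly identify as load-bearing. Also, in your multiplicativity check for $\Psi$, the step
\[
a(h_{(1)}\cdot b)[h_{(2)}][k]=a(h_{(1)}\cdot b)[h_{(2)}k]
\]
is valid, but not because \ref{PR5} applies directly to $[h_{(2)}][k]$; rather, expanding $h_{(1)}\cdot b=[h_{(1)}]b[S(h_{(2)})]$ exposes the factor $[S(h_{(2)})][h_{(3)}][k]$, to which \ref{PR5} then legitimately applies. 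It would be worth making that explicit.
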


The subalgebra $A_{par}$ can be defined by generators and relations, and therefore it can be computed on its own. This result provides an approach to studying $\Hpar$ via the isomorphism $\Hpar \simeq \underline{ \apar \# H}$.  

Given any left partial $H$-action on an algebra $B$, let $e_B: H \to B$ be the map defined as $e_B (h) = h \cdot 1_B$. 
It can be shown that   
	
	\begin{eqnarray}
    e (1_H) & = & 1_B; \label{eq:Arelation11} \\
		e_B(h) & = &  e_B(h_{(1)}) e_B(h_{(2)}); \label{eq:Arelation22} \\
		e_B(h_{(1)}) e_B(h_{(2)}k) & = &  e_B(h_{(1)}k) e_B(h_{(2)}), 		 \label{eq:Arelation33}
    \end{eqnarray}
	for all \(h, k \in H\).
In particular, for the partial action of $H$ on $\apar$ we have that $e_{\apar} (h) = \varepsilon_h$, and hence 
		\begin{eqnarray}
		1_{\apar} & = & \varepsilon_{1_H}; \label{eq:Arelation1} \\
		\varepsilon_h & = & \varepsilon_{h_{(1)}}  \varepsilon_{h_{(2)}}; \label{eq:Arelation2} \\
		\varepsilon_{h_{(1)}} \varepsilon_{h_{(2)}k} & = &\varepsilon_{h_{(1)}k} \varepsilon_{h_{(2)}}, \label{eq:Arelation3}
        \end{eqnarray}
	for all \(h, k \in H\).
 
The next result shows that those are in fact defining relations for $\apar$. 
 \begin{teorema} [{\cite[Theorem 4.12]{alves2015partial}}]
 \label{teorema.A.propriedade.universal}
     Let $H$ be a Hopf algebra with invertible antipode and let $\apar$ be the base algebra of the Hopf algebroid $H_{par}$. If $(B, e_B)$ is any pair consisting of an algebra $B$ and a linear map $e_B:H \to B$ that satisfies Equations \eqref{eq:Arelation11}, \eqref{eq:Arelation22} and \eqref{eq:Arelation33} then there exists a unique morphism of algebras $u :\apar \to B$ such that $e_B (h) =u (\varepsilon_h) $ for every $h \in H$.
 \end{teorema}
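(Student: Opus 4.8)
The plan is to deduce the statement from the fact that $\apar$ is presented, as an algebra, by the generators $\varepsilon_h$ ($h\in H$, linear in $h$) subject to exactly the relations \eqref{eq:Arelation1}, \eqref{eq:Arelation2} and \eqref{eq:Arelation3}. Uniqueness is the easy half: since $\apar$ is generated by the set $\{\varepsilon_h : h \in H\}$ and $u$ is required to be an algebra map with $u(\varepsilon_h) = e_B(h)$, its value on any product $\varepsilon_{h^1}\cdots\varepsilon_{h^n}$ is forced to equal $e_B(h^1)\cdots e_B(h^n)$; hence at most one such $u$ can exist.

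For existence, I would first introduce the algebra $\tilde A$ defined by the above presentation --- concretely, the quotient of the tensor algebra $T(H)$ on the vector space $H$ by the two-sided ideal generated by \eqref{eq:Arelation1}--\eqref{eq:Arelation3} (reading the generator attached to $h$ as $\varepsilon_h$). By the universal property of an algebra given by generators and relations, a pair $(B,e_B)$ satisfying \eqref{eq:Arelation11}--\eqref{eq:Arelation33} is exactly the datum of an algebra map $\tilde A \to B$, $\varepsilon_h \mapsto e_B(h)$. Since the genuine elements $\varepsilon_h \in \apar$ satisfy \eqref{eq:Arelation1}--\eqref{eq:Arelation3}, there is a canonical algebra epimorphism $\Phi : \tilde A \twoheadrightarrow \apar$, $\varepsilon_h \mapsto \varepsilon_h$. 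The whole theorem then reduces to showing that $\Phi$ is an isomorphism: granting that, for any $(B,e_B)$ one simply sets $u := (\tilde A \to B)\circ \Phi^{-1}$.

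To invert $\Phi$, I would build an algebra map $\Psi : \apar \to \tilde A$ with $\Psi(\varepsilon_h)=\varepsilon_h$ by means of the universal property of $\Hpar$ (\thref{universal_property}). The idea is to equip $\tilde A$ with a symmetric partial $H$-action (Definition~\ref{definicao.acao.parcial}) whose unit part is $h\cdot 1_{\tilde A} = \varepsilon_h$; on generators this is dictated by (PA$2$) together with the formula $h\cdot\varepsilon_k = [h_{(1)}]\varepsilon_k[S(h_{(2)})]$ already valid in $\apar$ (Theorem~\ref{teorema.Hpar.igual.AH}), transported to $\tilde A$. Granting this action, Example~\ref{exemplo.A.symmetric.EndA.partial} yields a partial representation $\eta : H \to \underline{\tilde A \# H}$, $h \mapsto 1_{\tilde A}\# h$, which by \thref{universal_property} induces an algebra map $\hat\eta : \Hpar \to \underline{\tilde A \# H}$ with $\hat\eta([h]) = 1\#h$. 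A direct smash-product computation then gives
\[
\hat\eta(\varepsilon_h) = (1\# h_{(1)})(1\# S(h_{(2)})) = (h_{(1)}\cdot 1_{\tilde A})\# h_{(2)}S(h_{(3)}) = \varepsilon_h \# 1_H ,
\]
so $\hat\eta$ carries the subalgebra $\apar \subseteq \Hpar$ into $\tilde A \# 1_H$, which is a subalgebra isomorphic to $\tilde A$. Composing with this identification yields $\Psi$ with $\Psi(\varepsilon_h)=\varepsilon_h$; since $\Phi$ and $\Psi$ both restrict to the identity on generators, they are mutually inverse, as required.

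The hard part will be the construction of the partial action on the abstractly presented algebra $\tilde A$: one must verify that the formulas for $h\cdot 1_{\tilde A}$ and $h\cdot\varepsilon_k$, extended multiplicatively by (PA$2$), are well defined modulo the relations \eqref{eq:Arelation1}--\eqref{eq:Arelation3} and that they genuinely satisfy (PA$1$)--(PA$4$). This is a coproduct-bookkeeping verification, in which the invertibility of the antipode is precisely what secures the symmetry axiom (PA$4$); everything else (uniqueness, the epimorphism $\Phi$, and the smash computation above) is routine. I note that the naive alternative --- trying to place a partial action directly on an arbitrary $B$ generated by $e_B(H)$ --- breaks down exactly because $B$ may satisfy relations beyond \eqref{eq:Arelation1}--\eqref{eq:Arelation3}, so the freest model $\tilde A$ is the correct place to carry out the construction.
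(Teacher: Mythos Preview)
The paper does not give its own proof of this statement: it is quoted verbatim from \cite[Theorem~4.12]{alves2015partial} and used as a black box throughout (in particular, in Lemmas~\ref{lema.varphiX.algebra.sobre} and~\ref{lema.psiX.algebra.ker}). So there is nothing in the present paper to compare your proposal against.

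That said, your strategy is exactly the one carried out in the original reference. You correctly isolate the real content: $\apar$, a priori only defined as a subalgebra of $\Hpar$, coincides with the algebra $\tilde A$ freely presented by the generators $\varepsilon_h$ and the relations \eqref{eq:Arelation1}--\eqref{eq:Arelation3}; the universal property is then tautological for $\tilde A$. Your plan to invert the canonical surjection $\Phi:\tilde A\twoheadrightarrow\apar$ by manufacturing a partial $H$-action on $\tilde A$ and passing through $\underline{\tilde A\#H}$ is the right one, and the smash computation you display is the correct identification of $\hat\eta(\varepsilon_h)$.

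The only point worth flagging is the phrase ``extended multiplicatively by (PA$2$)'': the action on a product of generators is not literally multiplicative but is the twisted formula
\[
h\cdot(\varepsilon_{k^1}\cdots\varepsilon_{k^n})=\varepsilon_{h_{(1)}k^1}\cdots\varepsilon_{h_{(n)}k^n}\,\varepsilon_{h_{(n+1)}},
\]
which is equation~\eqref{formula.H.action.on.Apar} in the paper. You seem to be aware of this, but when you come to the well-definedness check this formula is what you must show respects the defining ideal of $\tilde A$; the bookkeeping is genuine (several applications of \eqref{eq:Arelation3} in both directions), and the invertibility of $S$ is used, as you note, to secure the symmetric axiom (PA$4$). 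Once that is done, your argument goes through.
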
   

	Let us look at some examples to illustrate how partial representations of different Hopf algebras can behave very differently. 

 \begin{example}
     [Partial representations of finite groups] The structure of the algebra $(\K G)_{par}$ of a finite group $G$ was completely described in \cite{piccione}. Let $\Px_1(G)=\{X \subseteq G \mid e \in X\}$. 
     Let $\calG(G)$ be the groupoid whose arrows are the pairs $(X,g) \in \Px_1(G) \times G$ where $g^{-1} \in X$, and whose composition is defined as follows: the pair $((X,g), (Y,h))$ is composable only when 
     $X = hY$, and in this case $(hY, g) \circ  (Y,h) = (Y,gh)$. The identities are the arrows $(X,e)$ and we identify the set of objects of $\calG(G)$ with the set $\Px_1(G)$. The point is that the algebras $(\K G)_{par}$  
     and $\K \calG(G)$ are isomorphic. An isomorphism can be constructed as follows: to each $X \in \Px_1(G)$ one associates the idempotent 
$$ P_X = \prod_{x\in X} \epsilon_x\prod_{y\in G\smallsetminus X} (1_{\K_{par}G} - \epsilon_y) $$
of $\K_{par} G$. This is a complete set of central orthogonal idempotents of $\apar \subset \K_{par} G$. It follows from the proof of \cite[Thm 2.6]{piccione} that the linear map \[\psi: \K \calG(G) \to \K_{par} G , \ \ \ (X,g) \mapsto [g]P_X\] is an isomorphism of algebras. 
     
 Now if $\calG(G) = \bigsqcup_{i=1}^n \calG(G)_i$ is the decomposition as a union of connected groupoids then $\K \calG(G) \simeq \bigoplus_{i=1}^n\K \calG(G)_i$. The components can be described: two objects $X, Y \in \Px_1(G)$ lie in the same component if an only if there exists $g\in G$, such that $g^{-1}\in X$ and $gX = Y$. Let $\mathcal{P}_1(G) = \bigcup_{i=1}^n \mathcal{O}_i$ be the partition of $\mathcal{P}_1(G)$ according to the components of $\calG(C)$ and let $X_1, X_2, ..., X_n$ be a complete set of representatives in $\mathcal{P}_1(G)$ be the sets of objects of the components. The stabilizer of $X_i$ is isomorphic to the group $ G_i = \{g \in G \mid g\cdot X_i = X_i\}$. Letting $m_i = \mid\mathcal{O}_i\mid$, we have the algebra isomorphisms
\[
\K_{par}G 
\simeq \K \calG(G) 
\simeq \bigoplus_{i=1}^n\K \calG(G)_i \simeq \bigoplus_{i=1}^n Mat_{m_i} (\K G_i).
\]
This decomposition can be refined: 
\[
\K_{par}G = \bigoplus_{\substack{L \leq G \\ 1 m \leq [G:L]}} c_m (L) Mat_m (\K L)
\]
where the multiplicity factor $c_m(L)$ can be recursively computed (a slight mistake in this formula was later corrected in  \cite{DS16}).
     \end{example}
	
	\begin{example}
		\label{ex:lie}
		Let \(\mathfrak{g}\) be a Lie algebra and \(U(\mathfrak{g})\) its universal enveloping algebra. Every partial representation of \(U(\mathfrak{g})\) is global, as is shown in \cite[Example 4.4]{alves2015partial}.
	\end{example}

	\begin{example}
		\label{ex:sweedler}
		Let \(H\) be Sweedler's 4-dimensional Hopf algebra over a field of characteristic different of 2 (see for instance \cite[Example 1.5.6]{montgomery1993hopf}). 
		In \cite[Example 4.13]{alves2015partial}, a description of \(H_{par}\) was given, and in particular it was shown that the base algebra \(\apar\) of the Hopf algebroid \(H_{par}\) is infinite-dimensional and isomorphic to \(k[x, y]/(2x^2 - x, 2xz - z)\). Moreover, this shows that \(H_{par}\) does not have the structure of a weak Hopf algebra, since those have a separable Frobenius (in particular finite-dimensional) base algebra.
	\end{example}

\subsection{Convolution idempotent maps}

Let $A$ be an partial left $H$-module algebra. The map 
$$ e_A: H \to A, \ \ e_A(h) = h\cdot 1_A, $$ satisfies the fundamental equalities \eqref{eq:Arelation11}, \eqref{eq:Arelation22}, \eqref{eq:Arelation33}; the equality \eqref{eq:Arelation22} says that $e_A$ is an idempotent in the convolution algebra 
$\Hom_{\K}(H,A)$.

\begin{definicao}\label{definicao.idempotente.convolution.f.f*f}
Let $C$ be a coalgebra and let $A$ be an algebra. A map $f:C \to A$ is a \textbf{convolution idempotent} map if it is an idempotent of  the $\K$-algebra $\Hom_{\K}(C,A)$, that is,  
$$ f(c) = f(c_1)f(c_2), \ \ \forall c\in C. $$
\end{definicao}

If $A$ is a a partial left $H$-module algebra then the map $e_A : H \to A$, $e_A (h) = h \cdot 1_A$ is a convolution idempotent map. 
A natural extension of this example is the following: let $E$ be an idempotent element of $A$. By axiom (PA$2$) of Definition \ref{definicao.acao.parcial}, we have that
$$ h\cdot E = h\cdot(E^2) = (h_1\cdot E)(h_2\cdot E), \ \forall h \in H, $$
therefore the linear map $\alpha_E:H \to A$ defined by $\alpha_E(h) = h\cdot E$ is also a convolution idempotent map.

Our next objective is to show that if two convolution idempotent maps in $\Hom_{\K}(C,A)$ coincide on a particular subcoalgebra of $C$, its \textit{coradical}, then these maps are the same. In order to do so, it will be important to point out a fundamental property of the kernel of a convolution idempotent map.

Recall that if $V$ and $W$ are subspaces of the coalgebra $C$, their \textbf{wedge product} is the subspace 
\begin{equation} \label{wedge}
V \wedge W = \Delta^{-1}(V\x C + C\x W).
\end{equation}
Let  $f:C \to A$ be a convolution idempotent map and let $V$, $W$ be subspaces of $\ker f \subseteq C$. If $$c \in V \wedge W$$ then there exist $v_i \in V$, $w_j \in W$, and $x_i, y_j \in C$ ($i=1,...,n$; $j=1,...,k$) such that
$$ \Delta(c) = \sum_{i=1}^nv_i\x x_i + \sum_{j=1}^ky_j\x w_j. $$ 
Since $f$ is a convolution idempotent we have that
$$ f(c) = f(c_1)f(c_2) = \sum_{i}\underbrace{f(v_i)}_{=0}f(x_i) +
\sum_{j}f(y_j)\underbrace{f(w_j)}_{=0} = 0. $$

We have just proved the following fact.

\begin{lema}\label{lema.convolution.wedge.product}

Let $C$ be a coalgebra, $A$ be an algebra, and let $f:C \to A$ be a convolution idempotent map. If $V$ and $W$ are subspaces of $\ker f \subseteq C$ then 
$$ f(c) = 0, \ \ \forall c \in V\wedge W.$$
\end{lema}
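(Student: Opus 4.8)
The plan is to unwind the definition of the wedge product and then invoke the convolution‑idempotency of $f$ directly; no machinery beyond the definitions is needed. Recall that $V \wedge W = \Delta^{-1}(V \otimes C + C \otimes W)$, so membership $c \in V \wedge W$ means exactly that $\Delta(c)$ lies in the subspace $V \otimes C + C \otimes W$. First I would fix a finite representation
\[
\Delta(c) = \sum_{i=1}^{n} v_i \otimes x_i + \sum_{j=1}^{k} y_j \otimes w_j,
\]
with $v_i \in V$, $w_j \in W$ and $x_i, y_j \in C$; such an expression exists precisely because $c$ lies in the preimage under $\Delta$ of $V \otimes C + C \otimes W$, and any decomposition of this shape will do.

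Next I would use that $f$ is a convolution idempotent, i.e.\ $f * f = f$, which in terms of $\Delta$ and the multiplication of $A$ says $f(c) = f(c_1)f(c_2)$ in Sweedler notation. Evaluating this on $c$ and substituting the chosen expression for $\Delta(c)$ yields
\[
f(c) = \sum_{i=1}^{n} f(v_i) f(x_i) + \sum_{j=1}^{k} f(y_j) f(w_j).
\]
Since $V \subseteq \ker f$ we have $f(v_i) = 0$ for all $i$, and since $W \subseteq \ker f$ we have $f(w_j) = 0$ for all $j$; every summand therefore vanishes and $f(c) = 0$, as claimed.

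I do not expect any genuine obstacle here: the argument is purely formal. The only point meriting a word of care is that the decomposition of $\Delta(c)$ is neither canonical nor symmetric, but this is harmless because the two groups of terms are annihilated independently — the summands in the first group die through their left tensor factors lying in $V$, those in the second group through their right tensor factors lying in $W$ — so no interaction between the two families needs to be controlled.
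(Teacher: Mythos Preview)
Your proof is correct and follows essentially the same approach as the paper: write $\Delta(c)$ as a sum of tensors with left factor in $V$ plus tensors with right factor in $W$, then apply $f*f=f$ and use $V,W\subseteq\ker f$ to kill every term. The paper's argument is line-for-line the same.
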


\begin{lema}\label{lema.convolution.f-g.idempotent}
 
 Let $C$ be a coalgebra, $A$ an algebra, and consider $f,g:C \to A$ two convolution idempotent maps. If for any $c \in C$, 
 $$ f(c_1)g(c_2) = g(c_1)f(c_2) = g(c), $$ 
 then $f-g$ is a convolution idempotent map as well. 
 
\end{lema}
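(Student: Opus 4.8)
The approach I would take is to phrase everything in terms of the convolution algebra $\Hom_{\K}(C,A)$, whose product $*$ is given by $(\varphi * \psi)(c) = \varphi(c_1)\psi(c_2)$ and whose unit is $c \mapsto \varepsilon_C(c)1_A$. In this language, saying that $f$ and $g$ are convolution idempotent maps is exactly saying that $f*f = f$ and $g*g = g$ in $\Hom_{\K}(C,A)$, while the hypothesis $f(c_1)g(c_2) = g(c_1)f(c_2) = g(c)$ is precisely $f*g = g*f = g$. The assertion to be proved, namely that $f-g$ is a convolution idempotent map, then becomes the single identity $(f-g)*(f-g) = f-g$.

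With this setup the computation is immediate. First I would use bilinearity of $*$ to expand
$$(f-g)*(f-g) = f*f - f*g - g*f + g*g,$$
and then I would substitute the four identities listed above, $f*f = f$, $f*g = g$, $g*f = g$ and $g*g = g$, obtaining $(f-g)*(f-g) = f - g - g + g = f - g$. Unwinding the definition of $*$, this says $(f-g)(c) = (f-g)(c_1)(f-g)(c_2)$ for every $c \in C$, which is exactly the statement that $f-g$ is a convolution idempotent map.

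There is no genuine obstacle here; the only points worth spelling out are that $\Hom_{\K}(C,A)$ is an associative unital $\K$-algebra under convolution, so that the bilinear expansion above is legitimate, and that every product appearing in it is accounted for either by the standing hypotheses or by the individual idempotency of $f$ and $g$. I would also record, by the same expansion, the by-product identities $f*(f-g) = (f-g)*f = f-g$ and $g*(f-g) = (f-g)*g = 0$, which express that $f-g$ is orthogonal to $g$; these are likely to be used in the subsequent comparison of convolution idempotents through their restriction to the coradical.
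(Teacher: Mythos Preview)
Your proof is correct and follows exactly the approach the paper indicates: the paper simply observes that the hypotheses say $f*g = g*f = g$ in the convolution algebra $\Hom_\K(C,A)$ and that the result then holds for any two idempotents $f,g$ in a ring satisfying $fg=gf=g$. You have spelled out this ring-theoretic computation in full, which is more detail than the paper provides.
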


In fact, these equalities mean that $f*g = g*f = g$ in $\Hom_\K (C,A)$ and 
this result is valid for any two idempotents $f,g$ in a ring such that $fg=gf=g$.

\begin{obs}
    Let $H$ be a Hopf algebra and $A$ be an algebra. We already know that if $A$ is a partial left $H$ module algebra, that is, if there exists a left partial action $\cdot:H\x A \to A$, then for each idempotent element $E$ of $A$, the map
    $$ \begin{array}{cccc}
        \alpha_E: & H & \larr & A, \\
           & h & \lmap & h\cdot E,
    \end{array} $$
    is a convolution idempotent. Notice that the unit $1_A$ of $A$ is an idempotent. Furthermore 
\[
    \alpha_E(h)  =  h\cdot E   =  h\cdot(E1_A)  = (h_1\cdot E)(h_2\cdot 1_A)  =  \alpha_E(h_1)\alpha_{1_A}(h_2), 
\]
and we also have $\alpha_E(h) = \alpha_{1_A}(h_1)\alpha_E(h_2)$ by a similar computation. Hence, conclude that $(\alpha_E - \alpha_{1_A})$ is a convolution idempotent map for any idempotent element $E$ of $A$.
\end{obs}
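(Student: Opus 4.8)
The plan is to recognize the two displayed equalities in the Remark as the statement that $\alpha_{1_A}$ and $\alpha_E$ satisfy the hypothesis of Lemma~\ref{lema.convolution.f-g.idempotent} in the convolution algebra $\Hom_\K(H,A)$, and then to invoke that lemma. First I would record that both maps are genuine convolution idempotents: $\alpha_{1_A} = e_A$ is idempotent by \eqref{eq:Arelation22}, while $\alpha_E$ is idempotent because $E = E^2$ combined with axiom (PA$2$) of Definition~\ref{definicao.acao.parcial} gives $h \cdot E = h \cdot (E^2) = (h_{(1)} \cdot E)(h_{(2)} \cdot E)$, i.e.\ $\alpha_E * \alpha_E = \alpha_E$.

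The crux is the pair of cross-relations $\alpha_{1_A} * \alpha_E = \alpha_E * \alpha_{1_A} = \alpha_E$. Both come from (PA$2$) together with the fact that $1_A$ is a two-sided unit of $A$: writing $E = E\,1_A = 1_A\,E$ and expanding by (PA$2$) yields
\[
\alpha_E(h_{(1)})\,\alpha_{1_A}(h_{(2)}) = (h_{(1)}\cdot E)(h_{(2)}\cdot 1_A) = h\cdot(E\,1_A) = h\cdot E = \alpha_E(h),
\]
and symmetrically $\alpha_{1_A}(h_{(1)})\,\alpha_E(h_{(2)}) = h\cdot(1_A\,E) = \alpha_E(h)$. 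This is exactly the computation already sketched in the Remark, and it says precisely that $f * g = g * f = g$ for $f = \alpha_{1_A}$ and $g = \alpha_E$.

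With these identities verified, Lemma~\ref{lema.convolution.f-g.idempotent} applies directly and produces the convolution idempotent $f - g = \alpha_{1_A} - \alpha_E$; equivalently one may simply expand $(\alpha_{1_A} - \alpha_E)^{*2}$ using the four products $\alpha_{1_A}*\alpha_{1_A}=\alpha_{1_A}$, $\alpha_E*\alpha_E=\alpha_E$, and $\alpha_{1_A}*\alpha_E=\alpha_E*\alpha_{1_A}=\alpha_E$ to obtain $\alpha_{1_A}-\alpha_E-\alpha_E+\alpha_E=\alpha_{1_A}-\alpha_E$. There is essentially no hard step: the entire content is matching the partial-action identity $h\cdot E = (h_{(1)}\cdot E)(h_{(2)}\cdot 1_A)$ to the hypothesis of the lemma. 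The only point that requires care is the order of the difference — since the displayed relations put $\alpha_E$ on the right-hand side of $f*g=g*f=g$, it is the difference $\alpha_{1_A}-\alpha_E$ (in that order) that the lemma delivers as a convolution idempotent, reflecting that $\alpha_E$ lies below $\alpha_{1_A}$ in the idempotent order induced by $E = E\,1_A = 1_A\,E$.
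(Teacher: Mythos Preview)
Your proposal is correct and follows exactly the paper's own argument: verify the cross-relations $\alpha_{1_A}*\alpha_E = \alpha_E*\alpha_{1_A} = \alpha_E$ via (PA2) and then invoke Lemma~\ref{lema.convolution.f-g.idempotent}. Your closing observation about the order of the difference is well taken: with $f=\alpha_{1_A}$ and $g=\alpha_E$ the lemma yields that $\alpha_{1_A}-\alpha_E$ is the convolution idempotent, so the sign in the Remark's last line appears to be a slip (indeed $(\alpha_E-\alpha_{1_A})^{*2}=\alpha_{1_A}-\alpha_E$, not $\alpha_E-\alpha_{1_A}$).
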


As we already mentioned, we are now going to prove that if two convolution idempotent maps in $\Hom_{\K}(C,A)$ coincide on the \textit{coradical} of $C$, then they coincide on the whole coalgebra $C$.

\begin{definicao}\label{definicao.coradical.coalgebra}

    Let $C$ be a coalgebra. The \textbf{coradical} of $C$ is the sum of all simple subcoalgebras of $C$, denoted by $C_0$.
    
\end{definicao}

The wedge product of two subcoalgebras is also a subcoalgebra, and this allows the construction of the \textbf{coradical filtration} of a coalgebra $C$ as follows: starting at the coradical $C_0$, consider the following family of subcoalgebras of $C$, defined recursively by 
\begin{equation} \label{equation.coradical.filtration}
C_1 := C_0\wedge C_0, \ \ C_n:= C_{n-1}\wedge C_0, \ \textrm{for all } n\geq 1, 
\end{equation}
where, as defined in equation \eqref{wedge}, $V\wedge W =\Delta^{-1}(V\x C + C\x W)$ for any subspaces $V, W$ of $C$. Then we have
\begin{equation}\label{equation.coradical.filtration.encaixe}
    C_0 \subseteq C_1 \subseteq ... \subseteq \bigcup_{n\geq 0} C_n = C. 
\end{equation}
A proof of the inclusions (\ref{equation.coradical.filtration.encaixe}) can be found in \cite[Prop.4.1.5]{DavidERadford}.

\begin{teorema}\label{teorema.idempotent.convolution.coradical}
    Let $C$ be a coalgebra and $A$ an algebra, and $f,g:C \to A$ convolution idempotent maps. Then the following are equivalent:
    \begin{itemize}
        \item[a)] 
        $f = g$.

        \item[b)] $\left\{\begin{array}{rcl}
            f(c_1)g(c_2) & = & g(c_1)f(c_2) \ = \ g(c), \textrm{ for all } c\in C; \\
            f(x) & = & g(x), \textrm{ for all } x \in C_0,\textrm{ where } C_0 \textrm{ is the coradical of } C.
        \end{array}\right.$ 
    \end{itemize}
\end{teorema}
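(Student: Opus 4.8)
The plan is to dispose of the implication (a) $\Rightarrow$ (b) at once and to concentrate all the work on (b) $\Rightarrow$ (a). For the easy direction: if $f = g$, then $f * g = f * f = f = g$ because $f$ is a convolution idempotent, and symmetrically $g * f = g$; moreover $f$ and $g$ agree on all of $C$, in particular on $C_0$. So the content lies entirely in (b) $\Rightarrow$ (a).

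Assume (b) and set $h := f - g \in \Hom_\K(C,A)$. The first line of (b) is precisely the statement that $f * g = g * f = g$, which is the hypothesis of Lemma~\ref{lema.convolution.f-g.idempotent}; hence $h$ is again a convolution idempotent map. (Concretely, $h * h = f*f - f*g - g*f + g*g = f - g - g + g = h$.) The second line of (b) says that $h$ kills the coradical, i.e.\ $C_0 \subseteq \ker h$. What remains is to show $\ker h = C$, which forces $h = 0$ and therefore $f = g$. I want to stress that this step genuinely uses idempotency: a general linear map vanishing on $C_0$ need not vanish (the identity map is an example), so plain linear algebra will not do — one has to feed the coradical filtration into the wedge-product property of convolution idempotents.

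The engine for this is Lemma~\ref{lema.convolution.wedge.product}: if $V$ and $W$ are subspaces of $\ker h$, then $h$ vanishes on $V \wedge W$. Applying it with $V = W = C_0$ gives $C_1 = C_0 \wedge C_0 \subseteq \ker h$. Then I would argue by induction on $n$: assuming $C_{n-1} \subseteq \ker h$, take $V = C_{n-1}$ and $W = C_0$, both contained in $\ker h$, to conclude $C_n = C_{n-1} \wedge C_0 \subseteq \ker h$. Hence $C_n \subseteq \ker h$ for every $n \geq 0$, and since $C = \bigcup_{n \geq 0} C_n$ (see \eqref{equation.coradical.filtration.encaixe}), it follows that $\ker h = C$, i.e.\ $f = g$.

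There is no real computational obstacle here; the substance has been isolated in Lemmas~\ref{lema.convolution.wedge.product} and~\ref{lema.convolution.f-g.idempotent}, and the filtration does the rest via a routine induction. The only point deserving care is conceptual: one must notice that the exact hypothesis $f*g = g*f = g$ — not merely that $f$ and $g$ are idempotents — is what makes $f - g$ a convolution idempotent, and that it is the closure of $\ker h$ under wedging with $C_0$ that upgrades agreement on the coradical to agreement on all of $C$.
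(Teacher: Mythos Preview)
Your proof is correct and follows essentially the same route as the paper: reduce to showing that $h = f - g$ is a convolution idempotent vanishing on $C_0$ via Lemma~\ref{lema.convolution.f-g.idempotent}, then climb the coradical filtration using Lemma~\ref{lema.convolution.wedge.product}. The only slip is the parenthetical ``the identity map is an example'' of a linear map vanishing on $C_0$ but not everywhere --- the identity map does not vanish on $C_0$; you presumably meant something like a linear complement projection off $C_0$, but this is a motivational aside and does not affect the argument.
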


\begin{proof}
    The fact that a) implies b) is obvious. Conversely, assume that the conditions in b) hold, and notice that since 
    $$ f(c_1)g(c_2) = g(c_1)f(c_2) = g(c), $$
    by Lemma \ref{lema.convolution.f-g.idempotent} we have that $(f-g):C \to A$ is also a convolution idempotent map. By assumption, we conclude that $(f-g)(x) = 0$ for all $x \in C_0$. Since $C_n := C_{n-1}\wedge C_0$, for all $n \geq 1$, it follows from Lemma \ref{lema.convolution.wedge.product} that $(f-g)(x) = 0$ for all $x \in C_n$ with $n\geq 0$. Finally, since $\bigcup_{n\geq 0} C_n = C$ we conclude that  $(f-g)(c) = 0$ for all $c \in C$.
\end{proof}

\begin{obs}\label{obs.idempotent.convolution.sizes}
    Note that in the previous theorem, it seems that functions $f$ and $g$ play the same role, but the identity
    $$ f(c_1)g(c_2) = g(c_1)f(c_2) = g(c), \ \textrm{ for all } c\in C $$
    actually distinguishes them. 
 That identity can be rewritten as $ f*g=g*f=g $ which, in terms of ideals of $\Hom_\K (C,A)$, tell us that the left (resp. right) ideal generated by $f$ contains the left (resp. right) ideal generated by $g$.

\end{obs}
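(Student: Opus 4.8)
The plan is to recognize the displayed identity as an equation in the convolution algebra and then invoke an elementary fact about principal one-sided ideals in a unital ring. First I would recall that the product on $\Hom_\K(C,A)$ is convolution, $(f*g)(c) = f(c_1)g(c_2)$, so that the two equalities $f(c_1)g(c_2) = g(c)$ and $g(c_1)f(c_2) = g(c)$, imposed for every $c \in C$, are nothing but the equations $f*g = g$ and $g*f = g$ in $\Hom_\K(C,A)$. This already establishes the first assertion, namely that the identity can be rewritten as $f*g = g*f = g$.

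For the ideal interpretation I would use that $\Hom_\K(C,A)$ is a unital associative $\K$-algebra, with identity the map $c \mapsto \epsilon(c)1_A$; hence the left ideal generated by an element $x$ is $\Hom_\K(C,A)*x$ and the right ideal generated by $x$ is $x*\Hom_\K(C,A)$, each containing $x$ itself. From $g*f = g$ I would read off that $g = g*f$ lies in the left ideal $\Hom_\K(C,A)*f$ generated by $f$; since a left ideal is closed under left multiplication, the entire left ideal $\Hom_\K(C,A)*g$ generated by $g$ is then contained in $\Hom_\K(C,A)*f$. Symmetrically, $f*g = g$ gives $g = f*g \in f*\Hom_\K(C,A)$, whence the right ideal generated by $g$ sits inside the right ideal generated by $f$. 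This is precisely the claimed containment of one-sided ideals, in the stated direction.

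The argument is entirely formal, so there is no genuine obstacle; the only point requiring care is the bookkeeping of sides, so that one does not accidentally reverse the containment (the ideal generated by $f$ is the larger one). It is worth stressing what the asymmetry means, since this is the purpose of the remark: the condition $f*g = g*f = g$ is only superficially symmetric in $f$ and $g$, as it designates $g$ as the generator of the smaller one-sided ideals. Indeed, were the two maps truly interchangeable one would also have $f*g = g*f = f$, and combining this with $f*g = g*f = g$ forces $f = g$ outright; thus the identity genuinely distinguishes $f$ from $g$.
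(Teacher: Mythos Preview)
Your proposal is correct and complete. In the paper this statement is a remark offered without proof; your argument supplies exactly the routine justification the authors leave implicit, unpacking the convolution identity and reading off the principal-ideal containments in the obvious way.
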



	\section{Partial representations of pointed Hopf algebras}
	\label{se:smash}


A  \textbf{pointed} Hopf algebra $H$ is a Hopf algebra whose simple subcoalgebras are all one-dimensional, that is, whose coradical $H_0$ coincides with the Hopf subalgebra $\K G(H)$ generated by its group $G(H)$ of grouplike elements. 

Before actually dealing with partial representations of  pointed Hopf algebras, we shall explore the relation between convolution idempotent maps and grouplike elements.

 \subsection{Grouplikes and Idempotents}
    \label{se:grouplikes_idempotents}
\begin{proposicao}\label{proposicao.epsg.idempotent.central}
Let $\pi:H \to B$ be a partial representation of a Hopf algebra $H$ on an algebra $B$. Consider the group $G:=G(H)$ of grouplike elements of $H$, and consider the map $\epsilon^\pi:H \to B$ defined by $\epsilon^\pi(h) = \pi(h_1)\pi(S(h_2))$ for all $h \in H$. Then 
$$ \epsilon^{\pi}(g)^2 = \epsilon^{\pi}(g) \ \ \textrm{and} \ \ \epsilon^\pi(h)\epsilon^\pi(g) = \epsilon^\pi(g)\epsilon^\pi(h), $$ for all $g \in G$ and $h \in H$.
\end{proposicao}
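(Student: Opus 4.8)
The plan is to argue directly from the partial representation axioms (PR1)--(PR3), exploiting the only special feature we need about a grouplike element: if $g\in G$ then $\Delta(g)=g\otimes g$ and $S(g)=g^{-1}$, so that $\epsilon^\pi(g)=\pi(g)\pi(g^{-1})$ and, more generally, $\epsilon^\pi(gk)=\pi(gk_{(1)})\pi(S(k_{(2)})g^{-1})$ for every $k\in H$ (using $S(gk_{(2)})=S(k_{(2)})g^{-1}$). No multiplicativity of $\pi$ on grouplikes is used — only the defining axioms and the fact that $g^{-1}g=1_H$ in $H$.

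Idempotency is immediate: $\epsilon^\pi(g)^2=\pi(g)\pi(g^{-1})\pi(g)\pi(g^{-1})$, and (PR3) with $h=k=g$ gives $\pi(g)\pi(g^{-1})\pi(g)=\pi(g)\pi(g^{-1}g)=\pi(g)\pi(1_H)=\pi(g)$ by (PR1); hence $\epsilon^\pi(g)^2=\pi(g)\pi(g^{-1})=\epsilon^\pi(g)$.

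For the commutation relation I would first establish the auxiliary identity $\epsilon^\pi(g)\,\epsilon^\pi(gk)=\epsilon^\pi(gk)\,\epsilon^\pi(g)$ for all $k\in H$, by bringing both products to the common expression $\pi(g)\,\pi(k_{(1)})\pi(S(k_{(2)}))\,\pi(g^{-1})$. For the left-hand product, (PR3) with $h=g$ turns $\pi(g)\pi(g^{-1})\pi(gk_{(1)})$ into $\pi(g)\pi(k_{(1)})$, and then (PR3) with $h=k$, read from right to left, rewrites $\pi(k_{(1)})\pi(S(k_{(2)})g^{-1})$ as $\pi(k_{(1)})\pi(S(k_{(2)}))\pi(g^{-1})$. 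For the right-hand product, (PR3) with $h=gk$ and $k$ replaced by $g$ collapses $\pi(S(k_{(2)})g^{-1})\pi(g)$ to $\pi(S(k_{(2)}))$, leaving $\pi(gk_{(1)})\pi(S(k_{(2)}))\pi(g^{-1})$, and then (PR2), read from right to left with $h=g$, restores the leading $\pi(g)$. Comparing the two computations gives the auxiliary identity. Since $g$ is invertible in $H$, substituting $k=g^{-1}h$ then yields $\epsilon^\pi(g)\epsilon^\pi(h)=\epsilon^\pi(h)\epsilon^\pi(g)$ for every $h\in H$.

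The only delicate point is the bookkeeping in the commutation argument: one must keep track of which leg of which coproduct each factor $\pi(\,\cdot\,)$ belongs to and apply each axiom in the correct direction; the computation itself is short once one notices that both sides reduce to the same ``$g$-conjugate'' of $\epsilon^\pi(k)$. An equivalent route is through the universal property (Theorem~\ref{th:universal_property}): the induced algebra map $\hat\pi\colon H_{par}\to B$ with $\pi=\hat\pi\circ[-]$ satisfies $\epsilon^\pi(h)=\hat\pi(\varepsilon_h)$, so it suffices to know that $\varepsilon_g$ is a central idempotent of $\apar$ — idempotency from \eqref{eq:Arelation2} at $h=g$, and centrality from \eqref{eq:Arelation3} at $h=g$ after replacing $k$ by $g^{-1}k$. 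I would present the direct computation as the main argument and only mention this alternative.
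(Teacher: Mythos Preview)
Your proof is correct. The idempotency argument and the commutation argument both check out: each application of (PR2)/(PR3) is used exactly as stated, and the substitution $k=g^{-1}h$ at the end is legitimate because $g$ is grouplike (hence invertible in $H$). The alternative route via the universal property and \eqref{eq:Arelation2}--\eqref{eq:Arelation3} is also valid.

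By contrast, the paper disposes of this proposition in one line, observing that $\pi|_{\K G}$ is a partial representation of the group $G$ and deferring to \cite{piccione}. That reference certainly gives the idempotency of $\epsilon^\pi(g)$ and the commutation $\epsilon^\pi(g)\epsilon^\pi(g')=\epsilon^\pi(g')\epsilon^\pi(g)$ for $g,g'\in G$, but it does not literally cover the mixed case $\epsilon^\pi(g)\epsilon^\pi(h)=\epsilon^\pi(h)\epsilon^\pi(g)$ with $h\in H$ arbitrary; one still has to run a short computation like yours (or invoke the $\apar$-relations) to close the gap. Your direct argument is therefore more self-contained, and in fact supplies precisely the missing step; the trade-off is only length.
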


\begin{proof}
    The restriction of $\pi$ to $G$ yields a partial representation of the group $G$ on $B$, and then those equalities hold by \cite{piccione}. 
\end{proof}

Now let $\pi:H \to B$ be a partial representation and consider the subalgebra $A_\pi(B)$ of $B$ generated by all elements of the form $\epspi(h) =  \pi(h_1)\pi(S(h_2))$, for any $h \in H$. By the universal property of $\apar$ (Theorem \ref{teorema.A.propriedade.universal}), we can define the surjective algebra map $\overline{\epspi}: \apar \to A_\pi(B)$, given by $\overline{\epspi}(\eps_h) = \epspi(h)$. This subalgebra $A_{\pi}(B)$ has a canonical left partial $H$-action defined by 
$$ h\cdot a = \pi(h_1)a\pi(S(h_2)), \ \forall h \in H, a \in A_{\pi}(B). $$
More precisely, if $b = \epspi(k)$ for some $k \in H$, then
$$\begin{array}{ccl}
    h\cdot \epspi(k) & = & \pi(h_1)\pi(k_1)\pi(S(k_2))\pi(S(h_2))  \\
     & = & \pi(h_1k_1)\pi(S(k_2))\pi(S(h_2))  \\
     & = & \pi(h_1k_1)\pi(S(h_2k_2)h_3k_3)\pi(S(k_4))\pi(S(h_4))  \\
     & = & \pi(h_1k_1)\pi(S(h_2k_2))\pi(h_3k_3)\pi(S(k_4))\pi(S(h_4))  \\
     & = & \pi(h_1k_1)\pi(S(h_2k_2))\pi(h_3k_3S(k_4))\pi(S(h_4))  \\
     & = & \pi(h_1k_1)\pi(S(h_2k_2))\pi(h_3)\pi(S(h_4))  \\
     & = & \epspi(h_1k)\epspi(h_2).
\end{array}$$
For a generic element $b = \epspi(k^1)\epspi(k^2)...\epspi(k^n) \in A_{\pi}(B)$ where $k^1, k^2, ..., k^n \in H$, we have
\begin{equation} \label{formula.H.action.on.Apar}
    h\cdot b = \epspi(h_1k^1)\epspi(h_2k^2)...\epspi(h_nk^n)\epspi(h_{n+1}). 
\end{equation}
In other words, equation (\ref{formula.H.action.on.Apar}), tell us that 
$$ h\cdot b = \overline{\epspi}(h\cdot a), $$
where $a \in \apar$ and $\overline{\epspi}(a) = b$.
Therefore $A_\pi(B)$ is a partial left $H$-module algebra, and $\epspi(g) \in A_\pi(B)$ is a central idempotent for every grouplike element $g$ of $H$.

The above argument suggests to use idempotent elements in order to prove the following key lemma. We already know that if 
$$ \begin{array}{cccc}
    \cdot : & H\x A & \larr & A, \\
     & h\x a & \lmap & h\cdot a
\end{array} $$
is a left partial action and $E$ an idempotent element of $A$, the map $\alpha_E:H \to A$, given by $\alpha_E(h) = h\cdot E$ is a convolution idempotent map.

 \begin{lema}\label{lema.partialaction.idempotent.fE}
    Let $\cdot: H\x A \to A$ be a left partial representation of a Hopf algebra $H$ on an algebra $A$. If $E$ is a central idempotent element of $A$, and $f:H \to A$ a convolution idempotent map such that
    $$\left\{\begin{array}{rcl}
        f(h_1)(h_2\cdot E) & = & (h_1\cdot E)f(h_2) \ \ = \ \ h\cdot E, \textrm{ for all } h \in H, \\
        x\cdot E & = & f(x)E, \textrm{ for all } x\in H_0, \textrm{ where } H_0 \textrm{ it the coradical of } H;
    \end{array}\right.$$
    then $h\cdot E = f(h)E$, for all $h \in H$.
\end{lema}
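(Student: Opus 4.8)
The plan is to compare, inside the convolution algebra $\Hom_\K(H,A)$, three idempotent elements: the map $\alpha_E\colon h\mapsto h\cdot E$ (a convolution idempotent by axiom (PA$2$), as recalled just before the lemma), the map $\beta\colon h\mapsto f(h)E$, and the auxiliary map $\gamma\colon h\mapsto (h\cdot E)E$. Since $E$ is central, the map $\phi\colon A\to A$, $\phi(a)=aE$, is multiplicative, so $\gamma=\phi\circ\alpha_E$ and $\beta=\phi\circ f$ are convolution idempotent maps as well (a composite of a convolution idempotent with a multiplicative map is again a convolution idempotent). The goal $h\cdot E=f(h)E$ is exactly the identity $\alpha_E=\beta$, and I will obtain it by showing separately that $\gamma=\alpha_E$ and $\gamma=\beta$, each time invoking the equivalence of Theorem~\ref{teorema.idempotent.convolution.coradical}.

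For $\gamma=\alpha_E$: using (PA$2$) and the centrality of $E$ one checks $\alpha_E(h_1)\gamma(h_2)=(h_1\cdot E)(h_2\cdot E)E=(h\cdot E)E=\gamma(h)$ and, symmetrically, $\gamma(h_1)\alpha_E(h_2)=\gamma(h)$, i.e.\ $\alpha_E*\gamma=\gamma*\alpha_E=\gamma$. On the coradical $H_0$ the second hypothesis gives $x\cdot E=f(x)E$, hence $\gamma(x)=(x\cdot E)E=f(x)E^2=f(x)E=x\cdot E=\alpha_E(x)$. Theorem~\ref{teorema.idempotent.convolution.coradical}, applied to the pair $\alpha_E,\gamma$, then forces $\alpha_E=\gamma$; in particular $h\cdot E=(h\cdot E)E\in AE$ for every $h\in H$.

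For $\gamma=\beta$: by centrality of $E$ together with the two equalities in the first hypothesis, $\gamma(h_1)\beta(h_2)=(h_1\cdot E)E\,f(h_2)E=(h_1\cdot E)f(h_2)E=(h\cdot E)E=\gamma(h)$ and likewise $\beta(h_1)\gamma(h_2)=f(h_1)(h_2\cdot E)E=(h\cdot E)E=\gamma(h)$, so $\beta*\gamma=\gamma*\beta=\gamma$. On $H_0$, $\beta(x)=f(x)E=x\cdot E=(x\cdot E)E=\gamma(x)$ by the second hypothesis. Theorem~\ref{teorema.idempotent.convolution.coradical}, now applied to the pair $\beta,\gamma$, gives $\beta=\gamma$. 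Combining the two steps yields $h\cdot E=\gamma(h)=\beta(h)=f(h)E$ for all $h\in H$.

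The one subtle point — and the reason for introducing $\gamma$ — is that $\alpha_E$ and $\beta$ cannot be fed directly into Theorem~\ref{teorema.idempotent.convolution.coradical}: the convolution product $\alpha_E*\beta$ is a priori the third map $\gamma$ rather than $\alpha_E$ or $\beta$, and it is not automatic for a partial action that $h\cdot E$ lies in the ideal $AE$. Establishing $\gamma=\alpha_E$ (equivalently, $h\cdot E\in AE$), which is precisely where the coradical hypothesis is used, is what lets both comparisons close; all remaining verifications are one-line computations with (PA$2$) and the centrality of $E$.
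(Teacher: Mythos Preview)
Your proof is correct and follows essentially the same strategy as the paper's: the paper introduces precisely the same three convolution idempotents (there denoted $\alpha_E$, $\alpha'_E=\gamma$, and $f'=\beta$), verifies the same convolution relations $\alpha_E*\gamma=\gamma*\alpha_E=\gamma$ and $\beta*\gamma=\gamma*\beta=\gamma$, and applies Theorem~\ref{teorema.idempotent.convolution.coradical} twice in the same way, only in the opposite order. Your closing remark about why the intermediate map $\gamma$ is needed is a nice addition that the paper leaves implicit.
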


\begin{proof}
    The main idea is to construct convolution idempotent maps, and then show that they coincide on the coradical $H_0$. Consider the following maps:
    $$ \begin{array}{ccccl}
        \alpha_E: & H & \larr & A, & \textrm{defined by } \alpha_E(h)=h\cdot E,  \\
        \alpha'_E: & H & \larr & A, & \textrm{defined by } \alpha'_E(h)=\bigl(h\cdot E\bigr)E, \\
        f': & H & \larr & A, & \textrm{defined by } f'(h)=f(h)E,  
    \end{array}$$
    for all $h \in H$. It is readily seen that these three maps are convolution idempotent. Now consider $x \in H_0$. Since $x\cdot E = f(x)E$ by assumption, we have:
    $$ \alpha_E(x) = x\cdot E = f(x)E = f'(x), \ \ \textrm{and} \ \ \alpha'_E(x) = \bigl(x\cdot E\bigr)E = f(x)E^2=f(x)E = f'(x). $$
    Therefore all three maps $\alpha_E, \alpha'_E, f':H \to A$ coincide on the coradical $H_0$. By assumption, we know that
    $$ \begin{array}{ccl}
        \alpha'_E(h_1)f'(h_2) & = & \Bigl(\bigl(h_1\cdot E\bigr)E\Bigr)f(h_2)E, \ E \textrm{ is central,} \\
         & = & \Bigl(\bigl(h_1\cdot E\bigr)f(h_2)\Bigr)E^2  \\
         & = & (h\cdot E)E  \\
         & = & \alpha'_E(h), \\
    \end{array} $$
    and
    $$ \begin{array}{ccl}
        f'(h_1)\alpha'_E(h_2) & = & f(h_1)E\Bigl(\bigl(h_2\cdot E\bigr)E\Bigr), \ E \textrm{ is central,} \\
         & = & \Bigl(f(h_1)\bigl(h_2\cdot E\bigr)\Bigr)E^2  \\
         & = & (h\cdot E)E  \\
         & = & \alpha'_E(h). \\
    \end{array} $$
    Applying Theorem \ref{teorema.idempotent.convolution.coradical}, we deduce that $(h\cdot E)E = \alpha'_E(h) = f'(h) = f(h)E$, for all $h \in H$. Furthermore, we also have
    $$ \begin{array}{ccl}
        \alpha_E(h_1)\alpha'_E(h_2) & = & (h_1\cdot E)(h_2\cdot E)E  \\
         & = & (h\cdot E)E  \\
         & = & \alpha'_E(h)  \\
         & = & (h\cdot E)E  \\
         & = & (h_1\cdot E)(h_2\cdot E)E  \\
         & = & (h_1\cdot E)E(h_2\cdot E)  \\
         & = & \alpha'_E(h_1)\alpha_E(h_2),
    \end{array} $$
    so that, by applying Theorem \ref{teorema.idempotent.convolution.coradical}, again, we obtain $(h\cdot E)E = \alpha'_E(h) = \alpha_E(h) = h\cdot E$, for all $h \in H$. Finally, we see that
    $$ h\cdot E = \alpha_E(h) = \alpha'_E(h) = f'(h) = f(h)E, $$
    which concludes the proof.
\end{proof}

\subsection{Symmetric left partial action of a pointed Hopf algebra}

\label{se:Partial_Action}

Consider $H$ a Hopf algebra, together with a symmetric partial left $H$-module algebra $A$. Recall from Definition \ref{definicao.acao.parcial} that a structure of (symmetric) left partial $H$-module algebra on $A$ is a linear map
$$ \cdot:H\x A \to A, \ \ h\x a \mapsto h\cdot a, $$
such that
$$ 1_H\cdot a = a; \ \ \ \ h\cdot(ab) = (h_1\cdot a)(h_2\cdot b), \ \ \ \ 
 h\cdot (k\cdot a) = (h_1\cdot 1\um)(h_2k\cdot a) = (h_1k\cdot a)(h_2\cdot 1\um), $$
for all $h,k \in H$ and $a,b \in A$. Note that if $g$ is a grouplike element, and $a,b \in A$, then $g\cdot (ab) = (g\cdot a)(g\cdot b)$. 
Also, for any $h \in H$, we have:
$$ \begin{array}{ccl}
     (g\cdot 1\um)(h\cdot 1\um) & = & (g\cdot 1\um)\bigl((gg^{-1})h\cdot 1\um\bigr)  \\
     & = & (g\cdot 1\um)\bigl(g(g^{-1}h)\cdot 1\um\bigr)  \\
     & = & g\cdot \bigl((g^{-1}h)\cdot 1\um\bigr)  \\
     & = & \bigl(g(g^{-1}h)\cdot 1\um\bigr)(g\cdot 1\um)  \\
     & = & (h \cdot 1\um)(g\cdot 1\um). \\
\end{array} $$
Hence $g\cdot 1\um$ it is central on the subalgebra of $A$ generated by the elements of the form $h\cdot 1\um$, for $h \in H$. The next lemma shows that $g\cdot 1\um$ is in fact central in the whole algebra $A$, for any grouplike element $g$.
\begin{lema}\label{lema.g1.central.idempotent.of.A}
    Let $A$ be a symmetric partial left $H$-module algebra of a Hopf algebra $H$. If $g$ is a grouplike element of $H$ then $g\cdot 1\um$ is a central idempotent of $A$.
\end{lema}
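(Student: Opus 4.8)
The plan is to derive centrality directly from the two symmetry axioms (PA3) and (PA4) of \deref{acao.parcial}, applied with $h = g$ and $k = g^{-1}$, using that $g^{-1}$ is again a grouplike element of $H$ (so that $g^{-1}\cdot a$ is defined for every $a \in A$). Idempotency has already been recorded: since $g$ is grouplike, $\Delta(g) = g\ot g$ and axiom (PA2) gives $g\cdot 1\um = g\cdot(1\um 1\um) = (g\cdot 1\um)(g\cdot 1\um)$. So only centrality needs a proof.

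For centrality, fix $a \in A$ and evaluate $g\cdot(g^{-1}\cdot a)$ in two ways. Since $\Delta(g) = g\ot g$ and $gg^{-1} = 1_H$, axiom (PA3) with $h = g$, $k = g^{-1}$ yields
$$ g\cdot(g^{-1}\cdot a) \;=\; (g\cdot 1\um)\bigl(gg^{-1}\cdot a\bigr) \;=\; (g\cdot 1\um)\,a, $$
while the symmetric axiom (PA4) with the same data yields
$$ g\cdot(g^{-1}\cdot a) \;=\; \bigl(gg^{-1}\cdot a\bigr)(g\cdot 1\um) \;=\; a\,(g\cdot 1\um). $$
Comparing the two right-hand sides gives $(g\cdot 1\um)\,a = a\,(g\cdot 1\um)$ for every $a \in A$, which is exactly the assertion that $g\cdot 1\um$ is central in $A$; together with idempotency this finishes the argument.

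There is essentially no obstacle in this proof; the one point worth flagging is that the symmetric hypothesis (PA4) is genuinely used, and without it one only obtains centrality of $g\cdot 1\um$ inside the subalgebra of $A$ generated by the elements $h\cdot 1\um$, as in the computation immediately preceding the lemma. Indeed, that preliminary computation, showing $(g\cdot 1\um)(h\cdot 1\um) = (h\cdot 1\um)(g\cdot 1\um)$, is precisely the special case $a = h\cdot 1\um$ of the displayed argument above.
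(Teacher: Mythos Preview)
Your proof is correct and is essentially identical to the paper's: both compute $g\cdot(g^{-1}\cdot a)$ via (PA3) and via the symmetric axiom (PA4) with $h=g$, $k=g^{-1}$, and compare. The paper simply writes this as a single chain of equalities $a(g\cdot 1\um) = (gg^{-1}\cdot a)(g\cdot 1\um) = g\cdot(g^{-1}\cdot a) = (g\cdot 1\um)(gg^{-1}\cdot a) = (g\cdot 1\um)a$, whereas you split it into two displays, but the content is the same.
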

\begin{proof}
    Given $a \in A$ and $g \in G(H),$ we have: 
    $$ \begin{array}{ccl}
        a(g\cdot 1\um) & = & (gg^{-1}\cdot a)(g\cdot 1\um)  \\
         & = & g\cdot (g^{-1} \cdot a)  \\
         & = & (g\cdot 1\um)(gg^{-1}\cdot a)  \\
         & = & (g\cdot 1\um)a,
    \end{array} $$
    hence $g\cdot 1\um$ is central in $A$. The fact that $g\cdot 1_A$ is idempotent follows from
    $$ g\cdot 1\um = (g\cdot(1_A1_A)) = (g\cdot 1\um)(g\cdot 1\um), $$
    which concludes the proof.
\end{proof}

In the sequel, we shall assume that $H$ is a pointed Hopf algebra, and we will denote by $G:=G(H)$ the group of grouplike elements of $H$. We also shall assume that $G$ is a finite group and the antipode $S$ of $H$ is invertible. Also, we assume that $A$ is a symmetric partial left $H$-module algebra, where 
$$ \cdot:H\x A \to A, \ \ h\x a \mapsto h\cdot a, $$
is the left partial action of $H$ on $A$.

\begin{obs}\label{obs.Psa.PSAa}
   Let $G$ be a finite group and let $A$ be a partial symmetric $\K G$-module algebra; recall that the linear map $\pi : \K G \to \End(A) $ defined by $\pi(g) (a) = g \cdot a$ is a partial representation of $\K G$. For any $g \in G$, consider the element $\varepsilon(g) = \pi(g)\pi(g^{-1}) \in \End(A)$. In \cite{piccione}, the authors define, for any $S$ subset of $G$, the following element in $\End(A)$:
$$ P_S = \prod_{s\in S}\varepsilon(s) \prod_{s\in G \smallsetminus S} (\textrm{Id}\um-\varepsilon(s)). $$
Notice also that, for all $a \in A$ and $g \in G$,
$$ \varepsilon(g)(a)  =  \pi(g)\pi(g^{-1})(a)  
      = g\cdot(g^{-1}\cdot a)  
      =  (g\cdot 1\um)(gg^{-1}\cdot a)  
      = (g\cdot 1\um)a \ ;
$$
therefore, we have that
\begin{equation}
P_S(a) = \prod_{s\in S}(s\cdot 1\um) \prod_{s\in G \smallsetminus S} (1\um-(s\cdot 1\um))a.      
\end{equation}
Now considering the elements of the form 
$$ P_S^A = \prod_{s \in S}(s\cdot
        1\um) \prod_{s\in G\smallsetminus S} (1\um- (s\cdot 1\um)) \in A, $$
then we have, 
\begin{equation}
    P_S(a) = P_S^Aa,
\end{equation}
for all $a \in A$.
\end{obs}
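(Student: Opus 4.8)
The plan is to verify the two computational claims packed into the remark: that $\varepsilon(g)$ acts on $A$ as left multiplication by $g\cdot 1\um$, and that, as a consequence, the operator $P_S$ acts as left multiplication by the element $P_S^A\in A$.

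Since $A$ is a symmetric partial $\K G$-module algebra, Example~\ref{exemplo.A.symmetric.EndA.partial} (with $H=\K G$) tells us that $\pi\colon\K G\to\End(A)$, $\pi(g)(a)=g\cdot a$, is a partial representation, and Proposition~\ref{proposicao.epsg.idempotent.central} then says the $\varepsilon(g)=\pi(g)\pi(g^{-1})$ are pairwise commuting idempotents in $\End(A)$, so that $P_S$ (and, below, $P_S^A$) is well defined independently of the order of the factors. For the first claim, $g$ grouplike gives $\Delta(g)=g\x g$ and $S(g)=g^{-1}$, hence $\varepsilon(g)(a)=\pi(g)\bigl(\pi(g^{-1})(a)\bigr)=g\cdot(g^{-1}\cdot a)$; applying axiom (PA$3$) of the symmetric partial action with $h=g$, $k=g^{-1}$, and then (PA$1$),
\[
g\cdot(g^{-1}\cdot a)=(g_{(1)}\cdot 1\um)(g_{(2)}g^{-1}\cdot a)=(g\cdot 1\um)(gg^{-1}\cdot a)=(g\cdot 1\um)(1_H\cdot a)=(g\cdot 1\um)\,a .
\]
(One could equally use (PA$4$); both are available by symmetry and both close cleanly because $gg^{-1}=1_H$.) Thus $\varepsilon(g)=L_{g\cdot 1\um}$, where $L_x\colon a\mapsto xa$ is left multiplication by $x$, and therefore also $\mathrm{Id}\um-\varepsilon(g)=L_{1\um-(g\cdot 1\um)}$.

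For the second claim I would use that $L\colon A\to\End(A)$, $x\mapsto L_x$, is an algebra homomorphism, i.e.\ $L_x\circ L_y=L_{xy}$, with the order of composition matching the order of multiplication in $A$. Composing the defining factors of $P_S$ and substituting $\varepsilon(s)=L_{s\cdot 1\um}$ and $\mathrm{Id}\um-\varepsilon(s)=L_{1\um-(s\cdot 1\um)}$, the composite equals left multiplication by $\prod_{s\in S}(s\cdot 1\um)\prod_{s\in G\smallsetminus S}(1\um-(s\cdot 1\um))$, which is exactly $P_S^A$; hence $P_S=L_{P_S^A}$, i.e.\ $P_S(a)=P_S^A a$ for every $a\in A$. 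Here Lemma~\ref{lema.g1.central.idempotent.of.A} guarantees that each $g\cdot 1\um$, and hence $1\um-(g\cdot 1\um)$ and the product $P_S^A$ itself, is a genuine central idempotent of $A$.

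There is no serious obstacle. The only two points demanding a little care are (i) keeping the order of operator composition aligned with the order of multiplication inside $A$ when passing from $\prod_s\varepsilon(s)$ to $L_{\prod_s(s\cdot 1\um)}$, and (ii) observing that the group element $gg^{-1}$ is the unit $1_H$, so that the term $g_{(2)}g^{-1}\cdot a$ produced by (PA$3$) collapses to $a$ via (PA$1$).
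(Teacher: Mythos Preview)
Your proof is correct and follows essentially the same route as the paper: the remark itself carries out the computation $\varepsilon(g)(a)=g\cdot(g^{-1}\cdot a)=(g\cdot 1\um)(gg^{-1}\cdot a)=(g\cdot 1\um)a$ via (PA3) and (PA1), and then passes from the operator product to multiplication by $P_S^A$. You are simply more explicit about the scaffolding (that $L\colon A\to\End(A)$ is an algebra map, and that the factors commute), which the paper leaves implicit; note that your appeal to Lemma~\ref{lema.g1.central.idempotent.of.A} is a forward reference in the paper's ordering, but the lemma is proved independently so this causes no circularity.
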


We shall now delve deeper into the same element previously discussed, within the context of any Hopf algebra $H$. Notice that, since for all $g \in G = G(H)$, $(g\cdot 1\um)$ is a central idempotent of $A$, then $\bigl(1\um-(g\cdot 1\um)\bigr)$ also is a central idempotent. Hence 
$$ \begin{array}{ccl}
     1\um & = & \displaystyle \prod_{g \in G}\one  \\
       & = & \displaystyle \prod_{g \in G}\bigl((g\cdot 1\um) + 1\um-(g\cdot 1\um)\bigr)  \\
       & = & \displaystyle \sum_{X \subseteq G}\prod_{x \in X}(x\cdot
        1\um) \prod_{y\in G\smallsetminus X} (1\um- (y\cdot 1\um)).
\end{array}
 $$
 So given $X \subseteq G$, we may define the following element of $A$,
 \begin{equation}
     P_X^A = \prod_{x\in X}(x\cdot 1\um)\prod_{y\in G \smallsetminus X}(\one-(y\cdot 1\um)). 
 \end{equation} 
 We easily see that if $y\cdot 1\um = \one$, then $\one-(y\cdot 1\um) = 0$, therefore $P_X^A = 0$ if $y \notin X$. Hence, we only have to take into account elements $P_X^A \in A$ for $X \subseteq G$ such that $1_H \in X$, given that $1_H\cdot a = a$, for all $a \in A$.

 Denote by $\Px_1(G) = \{X \subseteq G \mid 1_H \in X\}$, and notice that 
$$ \one = \sum_{X \in \Px_1(G)}P^A_X. $$

 One important thing to note is that $P_X^A$ can eventually be zero even when $X \in \Px_1(G)$, it only depends on the left partial action of $H$ on $A$. 
Besides, it follows from the definition of $P_X^A$ that 
$$P_X^AP_Y^A = \left\{\begin{array}{cc}
    P_X^A, & \textrm{if } X=Y, \\
    0, & \textrm{otherwise}.
\end{array}\right.$$
Finally, note that the elements $P^A_X$ are  central idempotents of $A$, since each one is a product of central idempotents, and they are pairwise orthogonal, so we have the following.
\begin{proposicao}\label{proposicao.A.as.PxA.sommation}
    Let $H$ be a pointed Hopf algebra, let $G$ be the group of grouplike elements of $H$, and assume that $G$ is finite and that the antipode of $H$ is invertible. Then any symmetric partial left $H$-module algebra $A$ decomposes as an algebra, into a direct sum 
    $$ A = \bigoplus_{X\in \Px_1(G)} A\ P^A_X $$
    of unital ideals,     where $\Px_1(G) = \{X \subseteq G \mid 1_H \in X\}$, and 
    $$P_X^A = \prod_{x \in X}(x\cdot
        \one) \prod_{y\in G\smallsetminus X} (\one- (y\cdot 1\um)) $$
    are central idempotents of $A$, for all $X \in \Px_1(G)$.
\end{proposicao}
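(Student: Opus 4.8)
The proof is almost entirely a matter of assembling facts already established in the discussion preceding the statement; the plan is to check that $\{P^A_X : X \in \Px_1(G)\}$ is a complete set of pairwise orthogonal central idempotents of $A$ and then to invoke the standard direct-sum (Peirce) decomposition attached to such a family.

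First I would invoke Lemma \ref{lema.g1.central.idempotent.of.A}: for each $g \in G$ the element $g\cdot\one$ is a central idempotent of $A$, hence $\one - (g\cdot\one)$ is also a central idempotent, orthogonal to $g\cdot\one$. Because $G$ is finite, the identity $\one = \prod_{g\in G}\bigl((g\cdot\one) + (\one-(g\cdot\one))\bigr)$ is a finite product of commuting factors, and expanding it distributively gives $\one = \sum_{X\subseteq G} P^A_X$, where a subset $X$ records, for each $g$, whether the factor $(g\cdot\one)$ or the factor $(\one-(g\cdot\one))$ was chosen. I would then remark that if $1_H \notin X$ the factor $\one - (1_H\cdot\one) = \one - \one = 0$ occurs inside $P^A_X$, so those terms vanish and the sum reduces to $\one = \sum_{X\in\Px_1(G)} P^A_X$. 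Orthogonality and idempotency follow from the elementary identity $e(\one-e) = 0$ for an idempotent $e$: when $X\neq Y$ some $g\in G$ belongs to exactly one of $X, Y$, contributing a factor $g\cdot\one$ to one of $P^A_X, P^A_Y$ and a factor $\one-(g\cdot\one)$ to the other, so $P^A_X P^A_Y = 0$; and $(P^A_X)^2 = P^A_X$ since every factor occurring in $P^A_X$ is idempotent and the factors commute. Centrality of each $P^A_X$ is immediate, being a product of central elements.

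It then remains to record the standard consequence: for a finite complete family $\{e_X\}$ of pairwise orthogonal central idempotents with $\sum_X e_X = \one$, each $Ae_X$ is a two-sided ideal of $A$ by centrality of $e_X$, and is itself a unital algebra with identity $e_X$; one has $A = \sum_X Ae_X$ because $a = a\one = \sum_X ae_X$, the sum is direct because multiplying a relation $\sum_X a_X e_X = 0$ by $e_Y$ forces $a_Y e_Y = 0$, and the multiplication of $A$ is the ``diagonal'' one with respect to this splitting since $e_X e_Y = \delta_{X,Y}\,e_X$; hence $A \cong \bigoplus_{X\in\Px_1(G)} Ae_X$ as algebras. (Some summands $AP^A_X$ may be $0$, which happens precisely when $P^A_X = 0$, but this does not affect the argument.)

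There is essentially no obstacle here: the one genuinely substantive ingredient is the centrality of $g\cdot\one$, which is exactly where the invertibility of the grouplike elements is used, and this is already secured in Lemma \ref{lema.g1.central.idempotent.of.A}; everything else is bookkeeping with a finite complete system of orthogonal central idempotents, using only the finiteness of $G$.
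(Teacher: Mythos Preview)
Your proof is correct and follows essentially the same approach as the paper: the paper likewise establishes centrality and idempotency of each $g\cdot\one$ via Lemma~\ref{lema.g1.central.idempotent.of.A}, expands $\one = \prod_{g\in G}\bigl((g\cdot\one)+(\one-(g\cdot\one))\bigr)$ to obtain $\one=\sum_{X\in\Px_1(G)}P^A_X$ after discarding the terms with $1_H\notin X$, and records the orthogonality relation $P^A_XP^A_Y=\delta_{X,Y}P^A_X$. The standard Peirce decomposition you spell out is exactly what the paper then invokes.
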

For all $X \in \Px_1(G)$, $A \, P_X^A$ is an ideal of $A$ generated by $P_X^A$. However, in general $A \, P_X^A$ will not be stable under the left $H$-action. 

\begin{proposicao}
    Let $H$ be a pointed Hopf algebra, let $G$ be the group of grouplike elements of $H$, and assume that $G$ is finite and that the antipode of $H$ is invertible. Given $g \in G$ and $a \in A$, we have
$$ g\cdot (aP_X^A) = (g\cdot a)(g\cdot P_X^A) =  \left\{\begin{array}{cc}
    (g\cdot a)P_{gX}^A, & \textrm{if } g^{-1}\in X, \\
    0, & \textrm{otherwise}.
\end{array}\right. ,$$
 for all $X \in \Px_1(G)$.
\end{proposicao}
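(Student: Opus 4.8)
The plan is to reduce everything to the computation of $g\cdot P_X^A$, exploiting the two features of a grouplike element $g$: that $g\cdot(-)$ is multiplicative on products (immediate from (PA2) and $\Delta(g)=g\x g$), and that $g\cdot(k\cdot a)=(g\cdot\one)(gk\cdot a)$ (the special case of (PA3) with $g$ grouplike). First I would dispose of the first claimed equality: since $g\cdot(ab)=(g\cdot a)(g\cdot b)$ for grouplike $g$, we get $g\cdot(aP_X^A)=(g\cdot a)(g\cdot P_X^A)$, so the whole statement follows once $g\cdot P_X^A$ is identified.

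Next I would expand $g\cdot P_X^A$. By Lemma~\ref{lema.g1.central.idempotent.of.A} each $x\cdot\one$ is a central idempotent, hence so is each $\one-(y\cdot\one)$, and $P_X^A$ is a product of $|G|$ commuting central idempotents. Applying multiplicativity of $g\cdot(-)$ to this product and then simplifying each factor via (PA3) — namely $g\cdot(x\cdot\one)=(g\cdot\one)(gx\cdot\one)$ and consequently $g\cdot\bigl(\one-(y\cdot\one)\bigr)=(g\cdot\one)\bigl(\one-(gy\cdot\one)\bigr)$ — one obtains a product in which the idempotent $(g\cdot\one)$ occurs $|G|$ times and can be collapsed to a single factor. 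Re-indexing the remaining factors through the bijection $h\mapsto gh$ of $G$, which carries $X$ onto $gX$ and $G\smallsetminus X$ onto $G\smallsetminus gX$, gives
\[
g\cdot P_X^A \;=\; (g\cdot\one)\,\prod_{x'\in gX}(x'\cdot\one)\prod_{y'\in G\smallsetminus gX}\bigl(\one-(y'\cdot\one)\bigr)\;=\;(g\cdot\one)\,P_{gX}^A,
\]
where $P_{gX}^A\in A$ denotes the element defined by the same formula for the subset $gX\subseteq G$.

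It remains to analyze $(g\cdot\one)P_{gX}^A$ in the two cases. If $g^{-1}\in X$ then $1_H=gg^{-1}\in gX$, so $gX\in\Px_1(G)$ and $P_{gX}^A$ is one of the idempotents of Proposition~\ref{proposicao.A.as.PxA.sommation}; moreover, because $X\in\Px_1(G)$ we have $1_H\in X$, hence $g=g\cdot 1_H\in gX$, so $(g\cdot\one)$ is itself one of the central-idempotent factors appearing in $P_{gX}^A$, and therefore $(g\cdot\one)P_{gX}^A=P_{gX}^A$; combining with the first equality yields $g\cdot(aP_X^A)=(g\cdot a)P_{gX}^A$. If $g^{-1}\notin X$ then $1_H=gg^{-1}\notin gX$, so the factor $\one-(1_H\cdot\one)=\one-\one=0$ occurs in $P_{gX}^A$, whence $P_{gX}^A=0$ and $g\cdot P_X^A=0$, giving $g\cdot(aP_X^A)=0$.

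There is no deep obstacle here: the whole argument is the manipulation of commuting central idempotents together with two axioms. The one point deserving care is the bookkeeping in the re-indexing step and, in the first case, the observation that one actually has $g\in gX$ (not merely $1_H\in gX$) — this is precisely what lets the extra factor $(g\cdot\one)$ be absorbed into $P_{gX}^A$ rather than surviving as an honest obstruction.
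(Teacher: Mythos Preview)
Your proof is correct and follows essentially the same route as the paper: both expand $g\cdot P_X^A$ via the multiplicativity of $g\cdot(-)$ and the identity $g\cdot(x\cdot\one)=(g\cdot\one)(gx\cdot\one)$, then absorb the extra factor $(g\cdot\one)$ using the observation that $1_H\in X$ forces $g\in gX$. The only cosmetic difference is that the paper absorbs this factor before splitting into cases, whereas you carry it as $(g\cdot\one)P_{gX}^A$ and absorb it during the case analysis.
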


\begin{proof}
   Given $g \in G$, we have
 $$ \begin{array}{ccl}
     g\cdot P_X^A & = & \displaystyle g\cdot\Bigl[\prod_{x\in X}(x\cdot 1\um)\prod_{y\in G \setminus X}(\one-(y\cdot 1\um))\Bigr]  \\
      & = & \displaystyle \prod_{x\in X}g\cdot (x\cdot 1\um)\prod_{y\in G \setminus X}g\cdot\bigl(1\um-y\cdot 1\um\bigr)  \\
      & = & \displaystyle \prod_{x\in X}g\cdot (x\cdot 1\um)\prod_{y\in G \setminus X}\bigl((g\cdot 1\um)-g\cdot(y\cdot 1\um)\bigr)  \\
      & = & \displaystyle \prod_{x\in X}(gx\cdot 1\um)(g\cdot 1\um)\prod_{y\in G \setminus X}\bigl((g\cdot 1\um)-(g\cdot 1\um)(gy\cdot 1\um)\bigr)  \\
      & = & \displaystyle \prod_{x\in X}(gx\cdot 1\um)(g\cdot 1\um)\prod_{y\in G \setminus X}(g\cdot 1\um)\bigl(\one-(gy\cdot 1\um)\bigr)  \\
      & = & \displaystyle \prod_{x\in X}(gx\cdot 1\um)(g\cdot 1\um)\prod_{y\in G \setminus X}\bigl(\one-(gy\cdot 1\um)\bigr)  \\
      & = & \displaystyle \prod_{x\in X}(gx\cdot 1\um)\prod_{y\in G \setminus X}\bigl(\one-(gy\cdot 1\um)\bigr),
 \end{array} $$
 where in the last equality we used the fact that $x = 1_H \in X$.
  Hence,  
 $$ g\cdot P_X^A = \left\{\begin{array}{cl}
    P_{gX}^A, & \textrm{if } g^{-1} \in X, \\
    0, & \textrm{otherwise},
\end{array}\right. $$
then 
$$ g\cdot (aP_X^A) = (g\cdot a)(g\cdot P_X^A) = \left\{\begin{array}{cl}
    (g\cdot a)P_{gX}^A, & \textrm{if } g^{-1} \in X, \\
    0, & \textrm{otherwise}.\end{array}\right. $$
\end{proof}

The previous proposition shows that if $g^{-1} \in X$ then $g \cdot P^A_X = P^A_{gX}$. It follows immediately that if $g$ satisfies the equality $g \cdot P^A_X = P^A_{X}$ (that is, $g$ fixes $P^A_X$) than it comes from the \textit{stabilizer} of $X$
$$ G_X = \{g\in G \mid g^{-1} \in X, \  gX = X\} $$ 
which is a group for every $X \in \mathcal{P}_1(G)$ (see  \cite{piccione}). We also remark that if $ g^{-1} \in X$ then the conjugation map $g(-)g^{-1}:G_X \to G_{gX}$, given by $h \mapsto ghg^{-1}$, is an isomorphism of groups.

\begin{lema}\label{lema.hGammaA.h1GammaA}
    Let $H$ be a pointed Hopf algebra with invertible antipode and finite group $G=G(H)$ of grouplike elements. Let $A$ be a symmetric partial left $H$-module algebra. Given $X \in \mathcal{P}_1(G)$, the element
    $$ \Gamma^A_X = \dfrac{1}{|G_X|}\sum_{g\in G} g\cdot P^A_X = \dfrac{1}{|G_X|}\sum_{g^{-1}\in X}P^A_{gX}, $$
    is a central idempotent element of $A$, and satisfies
    $$ h\cdot \Gamma^A_X = (h\cdot 1\um)\Gamma^A_X, \ \textrm{ for all } h\in H. $$
\end{lema}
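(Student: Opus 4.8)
The plan is to first identify $\Gamma^A_X$ with the ``block sum'' $\sum_{Y \sim X} P^A_Y$ over the $\sim$-class of $X$, deduce centrality and idempotency directly from Proposition~\ref{proposicao.A.as.PxA.sommation}, and then obtain the identity $h\cdot\Gamma^A_X = (h\cdot 1\um)\Gamma^A_X$ by invoking Lemma~\ref{lema.partialaction.idempotent.fE} with $E := \Gamma^A_X$ and $f := e_A$, where $e_A(h) = h\cdot 1\um$.

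First I would record that the two displayed formulas for $\Gamma^A_X$ agree: by the previous proposition $g\cdot P^A_X = P^A_{gX}$ when $g^{-1}\in X$ and $g\cdot P^A_X = 0$ otherwise, so $\sum_{g\in G} g\cdot P^A_X = \sum_{g^{-1}\in X} P^A_{gX}$. Next I would count multiplicities in this last sum. Since $X\in\mathcal{P}_1(G)$ one has $1_H\in X$, hence $gX = X$ forces $g^{-1}\in g^{-1}X = X$; thus the set-theoretic stabilizer $\{g\in G : gX = X\}$ coincides with $G_X$, which is a group by \cite{piccione}. It follows that for a fixed $g$ with $g^{-1}\in X$, the elements $h$ with $h^{-1}\in X$ and $hX = gX$ are precisely the coset $gG_X$, a set of size $|G_X|$ (that $gG_X$ lies inside $\{h : h^{-1}\in X\}$ uses $g^{-1}\in X$ together with $k^{-1}X = X$ for $k\in G_X$). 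Since moreover $\{gX : g^{-1}\in X\}$ is exactly the $\sim$-class of $X$, each $P^A_Y$ with $Y\sim X$ occurs $|G_X|$ times in $\sum_{g^{-1}\in X}P^A_{gX}$, and therefore
\[
\Gamma^A_X = \frac{1}{|G_X|}\sum_{g^{-1}\in X} P^A_{gX} = \sum_{Y\sim X} P^A_Y .
\]
By Proposition~\ref{proposicao.A.as.PxA.sommation} each $P^A_Y$ is a central idempotent of $A$ and the family $\{P^A_Y\}$ is orthogonal, so $\Gamma^A_X$ is central and $(\Gamma^A_X)^2 = \sum_{Y\sim X}(P^A_Y)^2 = \Gamma^A_X$.

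It then remains to prove $h\cdot\Gamma^A_X = (h\cdot 1\um)\Gamma^A_X$ for all $h\in H$, and for this I would apply Lemma~\ref{lema.partialaction.idempotent.fE} to the central idempotent $E = \Gamma^A_X$ and the convolution idempotent map $f = e_A$. The first hypothesis of that lemma, $f(h_1)(h_2\cdot E) = (h_1\cdot E)f(h_2) = h\cdot E$, is immediate from axiom (PA$2$): $(h_1\cdot 1\um)(h_2\cdot\Gamma^A_X) = h\cdot(1\um\,\Gamma^A_X) = h\cdot\Gamma^A_X$, and symmetrically. For the second hypothesis I must check $x\cdot E = f(x)E$ only on the coradical $H_0 = \K G$, hence, by linearity, only for grouplike $g$: there, axiom (PA$3$) gives $g\cdot(k\cdot P^A_X) = (g\cdot 1\um)(gk\cdot P^A_X)$, so
\[
g\cdot\Gamma^A_X = \frac{1}{|G_X|}\sum_{k\in G} g\cdot(k\cdot P^A_X) = (g\cdot 1\um)\,\frac{1}{|G_X|}\sum_{k\in G}(gk\cdot P^A_X) = (g\cdot 1\um)\Gamma^A_X,
\]
the last equality by re-indexing the sum over $G$. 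Lemma~\ref{lema.partialaction.idempotent.fE} then yields $h\cdot\Gamma^A_X = f(h)\Gamma^A_X = (h\cdot 1\um)\Gamma^A_X$ for all $h\in H$.

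The only genuinely delicate points are bookkeeping. One is getting the coset count right so that the normalization $1/|G_X|$ comes out correctly (i.e. that the stabilizer of $X$ really is $G_X$ and that each fibre of $g\mapsto gX$ has exactly $|G_X|$ elements). The other, which I regard as the conceptual crux, is assigning the roles in Lemma~\ref{lema.partialaction.idempotent.fE} so that its conclusion is \emph{exactly} the wanted formula: one must take $f = e_A$, because then the lemma's coradical hypothesis ``$x\cdot E = f(x)E$ on $H_0$'' is precisely the restriction to grouplikes of the identity $h\cdot\Gamma^A_X = (h\cdot 1\um)\Gamma^A_X$ we are after, and it is the coradical-filtration comparison inside that lemma (ultimately Theorem~\ref{teorema.idempotent.convolution.coradical}) that propagates it from $\K G$ to all of $H$.
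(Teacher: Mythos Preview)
Your proof is correct and follows essentially the same approach as the paper: the identity $h\cdot\Gamma^A_X = (h\cdot 1\um)\Gamma^A_X$ is obtained exactly as in the paper, by applying Lemma~\ref{lema.partialaction.idempotent.fE} with $E=\Gamma^A_X$ and $f=e_A$, verifying the coradical hypothesis on grouplikes via the reindexing $k\mapsto gk$. The only cosmetic difference is that you first rewrite $\Gamma^A_X = \sum_{Y\sim X} P^A_Y$ (by the coset count) and read off centrality and idempotency immediately from Proposition~\ref{proposicao.A.as.PxA.sommation}, whereas the paper computes $(\Gamma^A_X)^2$ directly from the unsimplified sum and only records the formula $\Gamma^A_X = \sum_{Y\sim X} P^A_Y$ afterwards.
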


\begin{proof}
    First, we show that $\Gamma^A_X$ is an idempotent element. We have: 
    $$ \Gamma^A_X\Gamma^A_X  = \dfrac{1}{|G_X|}\dfrac{1}{|G_X|}\sum_{g^{-1}\in X}P^A_{gX}\sum_{k^{-1}\in X}P^A_{kX} = \dfrac{1}{|G_X|^2}\sum_{g^{-1}\in X}\sum_{k^{-1}\in X}P^A_{gX}P^A_{kX}. $$
    Next, $P^A_{gX}P^A_{kX}=P^A_{gX}$ if and only if $gX=kX$ or $g^{-1}kX=X$, otherwise $P^A_{gX}P^A_{kX}$ vanishes, therefore:
    $$ \Gamma^A_X\Gamma^A_X = \displaystyle 
        \dfrac{1}{|G_X|^2}\sum_{g^{-1}\in X}\sum_{{k^{-1}\in X,} \atop {g^{-1}k\in G_X}}P^A_{gX}. $$
    If we consider $y:=g^{-1}k \in G_X$, then $k=gy$ and $k^{-1} = y^{-1}g^{-1}\in y^{-1}X = X$, hence
    $$ \begin{array}{ccl}
        \Gamma^A_X\Gamma^A_X & = & 
        \displaystyle \dfrac{1}{|G_X|^2}\sum_{g^{-1}\in X}\sum_{y\in G_X}P^A_{gX}  \\
        & = & \displaystyle \dfrac{1}{|G_X|^2}\sum_{g^{-1}\in X}|G_X|P^A_{gX}  \\
        & = & \displaystyle \dfrac{1}{|G_X|}\sum_{g^{-1}\in X}P^A_{gX} \ \ = \ \ \Gamma^A_X. \\
    \end{array}  $$
    We shall now prove that $h\cdot\Gamma^A_X = (h\cdot 1\um)\Gamma^A_X$. Consider the map $({-}\cdot 1\um):H \to A$, given by $h \to (h\cdot 1\um)$, which is a convolution idempotent map, satisfying:
    $$ \left\{\begin{array}{rcl}
        h\cdot \Gamma^A_X & = & (h_1\cdot 1\um)(h_2\cdot \Gamma^A_X), \\
        h\cdot \Gamma^A_X & = & (h_1\cdot \Gamma^A_X)(h_2\cdot 1\um),
    \end{array}\right. $$
    for all $h \in H$. Given $k \in  G$, we have
    $$ \begin{array}{ccl}
        k\cdot\Gamma^A_X & = & \displaystyle \dfrac{1}{|G_X|}\sum_{g\in G}k\cdot\bigl(g\cdot P^A_X\bigr)  \\
        & = & \displaystyle \dfrac{1}{|G_X|}\sum_{g\in G}(k\cdot 1\um)(kg\cdot P^A_X)  \\
        & = & \displaystyle(k\cdot 1\um)\biggl(\dfrac{1}{|G_X|}\sum_{g\in G}(kg\cdot P^A_X)\biggr)  \\
         & = & \displaystyle(k\cdot 1\um)\biggl(\dfrac{1}{|G_X|}\sum_{y\in G}(y\cdot P^A_X)\biggr)  \\
        & = & (k\cdot 1\um)\Gamma^A_X.
    \end{array}
     $$
     Hence for any $x \in H_0=\K G$, we have $x\cdot \Gamma^A_X = (x\cdot 1\um)\Gamma^A_X$. Since $\Gamma^A_X$ is a central idempotent element in $A$, from Lemma \ref{lema.partialaction.idempotent.fE} we conclude that 
     $$h \cdot \Gamma^A_X = (h\cdot 1\um)\Gamma^A_X,$$
     for all $h \in H$.
\end{proof}

\begin{remark}
   Since $g \cdot P^A_X = P^A_{gX} $ when $g^{-1} \in X$, the partial left $H$-action on $A$ induces a partial $G$-action on the set $\{P^A_X; X \in \mathcal{P}_1(G) \}$ which is isomorphic to the canonical partial $G$-action on $\mathcal{P}_1(G)$; we may view the definition of the idempotents $\Gamma_X^A$ as the result of ``taking the average'' of the idempotents $P^A_X$ under the partial $G$-action. See \cite{DE05} for more details on partial actions of groups on sets and algebras. 
\end{remark}

     By Proposition \ref{proposicao.A.as.PxA.sommation}, we have that 
$$ \one = \sum_{X \in \mathcal{P}_1(G)}P^A_X. $$ 
There is an equivalence relation on $\mathcal{P}_1(G)$, given by

\begin{equation}\label{equivalence.relation}
X \sim Y \textrm{ if there exists } g \in G \textrm{ such that } g^{-1}\in X \textrm{ and }  gX = Y. 
\end{equation}

Let $\mathcal{O}_1, \mathcal{O}_2, ..., \mathcal{O}_n$ denote the distinct equivalence classes. Since $\mathcal{P}_1(G) = \bigcupdot_{k=1}^n\mathcal{O}_k$ it follows that
$$ \one = \sum_{k=1}^n\Bigl(\sum_{X \in \mathcal{O}_k}P^A_X\Bigr). $$
It is not hard to see that
$$ \Gamma^A_X = \dfrac{1}{|G_X|}\sum_{g^{-1}\in X}P^A_{gX} = \sum_{Y\sim X} P^A_Y, $$
for any $X \in \mathcal{P}_1(G)$. Then, choosing representatives $X_1, X_2, ..., X_n$ in $\mathcal{P}_1(G)$ with $X_k \in \mathcal{O}_k$ for all $k=1,2,...,n$, we have:
$$ \one = \sum_{k=1}^n\Bigl(\sum_{X \sim X_k}P^A_X\Bigr) = \sum_{k=1}^n\Gamma^A_{X_k}. $$
Finally, by Lemma \ref{lema.hGammaA.h1GammaA}, $\Gamma^A_X$ is a central idempotent of $A$ and we have 
$$ A = \bigoplus_{k=1}^n \bigl(A\Gamma^A_{X_k}\bigr), $$
and again by Lemma \ref{lema.hGammaA.h1GammaA}, we have
$$ h\cdot(a\Gamma_X^A) = (h_1\cdot a)\bigl(h_2\cdot \Gamma_X^A\bigr) = (h_1\cdot a)(h_2\cdot 1\um)\Gamma_{X}^A = (h\cdot a)\Gamma_X^A, $$
therefore for all $X \in P_1(G)$ the ideal $A \, \Gamma_X^A$ generated by $\Gamma_X^A$ is stable under the $H$-action.

In other words, we have the main theorem of this section.

\begin{teorema}\label{teorema.A.GammaA.direct.somma}
    
 Let $H$ be a pointed Hopf algebra with invertible antipode and finite group $G=G(H)$ of grouplike elements, and let $A$ be a symmetric partial left $H$-module algebra.  If $X_1, X_2, ..., X_n$ constitute a complete set of representatives in $\mathcal{P}_1(G)$ for the equivalence relation $\sim$ defined in \eqref{equivalence.relation}, then
    $$ A = \bigoplus_{k=1}^{n}(A \, \Gamma^A_{X_k}) $$
    where $A \, \Gamma^A_{X_k}$ is the ideal of $A$ generated by $\Gamma_{X_k}^A$, which is stable under the left partial action of $H$, for all $k=1,2, ..., n$. Moreover, for every $X \in \Px_1(G)$, every $a \in A$ and $h \in H$,  
    $$ h\cdot(a\Gamma^A_X) = (h\cdot a)\Gamma^A_X.$$   
\end{teorema}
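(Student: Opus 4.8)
The plan is to assemble the theorem from the two structural facts already in hand: Proposition \ref{proposicao.A.as.PxA.sommation}, which gives the decomposition $A = \bigoplus_{X \in \Px_1(G)} A P^A_X$ into unital ideals generated by a complete set of pairwise orthogonal central idempotents $P^A_X$, and Lemma \ref{lema.hGammaA.h1GammaA}, which says that each $\Gamma^A_X$ is a central idempotent of $A$ with $h \cdot \Gamma^A_X = (h \cdot 1\um)\Gamma^A_X$ for all $h \in H$. Everything else is reindexing by the equivalence relation $\sim$ of \eqref{equivalence.relation} together with routine idempotent bookkeeping; in particular, all the genuine analytic input — the comparison of convolution idempotents through their restriction to the coradical, via Theorem \ref{teorema.idempotent.convolution.coradical} and Lemma \ref{lema.partialaction.idempotent.fE} — has already been consumed in the proof of Lemma \ref{lema.hGammaA.h1GammaA}, so I do not expect a real obstacle. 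The only place requiring a moment's care is the reindexing identity $\Gamma^A_X = \sum_{Y\sim X} P^A_Y$, which rests on the coset structure of the fibers of the map $g \mapsto gX$.

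First I would record that identity. Writing $T_X = \{g \in G : g^{-1} \in X\}$, the map $g \mapsto gX$ sends $T_X$ onto the equivalence class $\mathcal{O}$ of $X$, and the fiber over a fixed $Y \in \mathcal{O}$ is a left coset of the stabilizer $G_X$ (here one uses $1_H \in X$), hence has exactly $|G_X|$ elements; grouping the sum $\frac{1}{|G_X|}\sum_{g \in T_X} P^A_{gX}$ by its value therefore cancels the factor $\frac{1}{|G_X|}$ and leaves $\sum_{Y \in \mathcal{O}} P^A_Y$, each $P^A_Y$ occurring exactly once. In particular $\Gamma^A_X$ depends only on the class of $X$, so writing $\Gamma^A_{X_k}$ for a chosen representative $X_k$ of $\mathcal{O}_k$ is unambiguous.

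Next, since the classes $\mathcal{O}_1, \dots, \mathcal{O}_n$ partition $\Px_1(G)$, summing the previous identity over the representatives $X_1, \dots, X_n$ and using $1\um = \sum_{X \in \Px_1(G)} P^A_X$ from Proposition \ref{proposicao.A.as.PxA.sommation} gives $1\um = \sum_{k=1}^n \Gamma^A_{X_k}$. The orthogonality $\Gamma^A_{X_i}\Gamma^A_{X_j} = 0$ for $i \neq j$ is immediate from $\Gamma^A_{X_k} = \sum_{Y \in \mathcal{O}_k} P^A_Y$, the orthogonality of the $P^A_Y$, and the disjointness of the $\mathcal{O}_k$, while idempotency and centrality of each $\Gamma^A_{X_k}$ are given by Lemma \ref{lema.hGammaA.h1GammaA}. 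A complete set of pairwise orthogonal central idempotents summing to $1\um$ produces the internal direct-sum decomposition of algebras $A = \bigoplus_{k=1}^n A\Gamma^A_{X_k}$, where each summand is a two-sided (since $\Gamma^A_{X_k}$ is central) unital ideal with identity element $\Gamma^A_{X_k}$.

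Finally, for stability under the partial action I would compute, for $h \in H$, $a \in A$ and any $X \in \Px_1(G)$,
\[
h \cdot (a\,\Gamma^A_X) = (h_1 \cdot a)(h_2 \cdot \Gamma^A_X) = (h_1 \cdot a)(h_2 \cdot 1\um)\,\Gamma^A_X = (h \cdot a)\,\Gamma^A_X,
\]
where the first equality is axiom (PA$2$), the second is Lemma \ref{lema.hGammaA.h1GammaA}, and the third is (PA$2$) applied once more to $h \cdot a = h \cdot (a\,1\um)$. This is exactly the last displayed formula of the theorem, and taking $a$ arbitrary it shows $h \cdot (A\Gamma^A_{X_k}) \subseteq A\Gamma^A_{X_k}$, so each block is stable under the left partial action; together with the direct-sum decomposition obtained in the previous step, this is the full content of the statement.
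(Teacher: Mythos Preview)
Your proof is correct and follows essentially the same route as the paper: both arguments combine Proposition~\ref{proposicao.A.as.PxA.sommation} with Lemma~\ref{lema.hGammaA.h1GammaA}, reindex via the identity $\Gamma^A_X = \sum_{Y\sim X} P^A_Y$, and conclude stability with the same three-step computation using (PA$2$). You are in fact somewhat more careful than the paper, which leaves the reindexing identity and the orthogonality of the $\Gamma^A_{X_k}$ to the reader, whereas you spell out the coset-counting argument explicitly.
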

Now notice that if $X \in P_1(G)$ is a subgroup of $G$, then $G_X = X$ therefore
$$ \Gamma_X^A = \dfrac{1}{|G_X|}\sum_{g^{-1}\in X}P^A_{gX} = \dfrac{1}{|X|}\sum_{g\in X}P^A_{X} = P^A_X = \prod_{x\in X}(x\cdot 1\um)\prod_{y\in G\setminus X}(\one-(y\cdot 1\um)). $$
In particular, for $X=G$, we obtain:
$$ \Gamma_G^A = P_G^A = \prod_{g\in G}(g\cdot 1\um). $$
For that special element, we see that, for all $k \in G$:
$$ k\cdot \Gamma^A_G = \displaystyle (k\cdot
     1\um)\Gamma^A_G = \displaystyle (k\cdot 1\um)\prod_{g \in G}(g\cdot 1\um) = \displaystyle \prod_{g \in G}(g\cdot 1\um) = \epsilon(k)\Gamma^A_G. $$

We also have
$$  h\cdot \Gamma^A_G = \epsilon(h_1)\bigl(h_2\cdot \Gamma^A_G\bigr) = \bigl(h_1\cdot \Gamma^A_G\bigr)\epsilon(h_2), $$
for all $h \in H$, so we conclude by the Lemma \ref{lema.partialaction.idempotent.fE}, that
$$ h\cdot \Gamma^A_G = \epsilon(h)\Gamma^A_G, \ \ \textrm{for all }h \in H. $$

We have just proved the following lemma.

\begin{lema}\label{lema.hGammaG.epshGammaG}
 Let $H$ be a pointed Hopf algebra with invertible antipode and finite group $G$ of grouplike elements, and let $A$ be a symmetric partial left $H$-module algebra.
 \begin{enumerate}
     \item If $H$ is a subgroup of $G$, then 
     $$ \Gamma_H^A = P^A_H.$$
     \item For $\Gamma^A_G = \prod_{g \in G} (g\cdot 1\um)$ we have that 
    $$ h\cdot \Gamma^A_G = (h\cdot 1\um)\Gamma^A_G = \epsilon(h)\Gamma^A_G,$$ 
    for all $h \in H$.
    \end{enumerate}
\end{lema}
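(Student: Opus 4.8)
The plan is to handle the two parts independently, in both cases working directly with the idempotents $P^A_X$, the closed form $\Gamma^A_X=\tfrac{1}{|G_X|}\sum_{g^{-1}\in X}P^A_{gX}$, and the transformation rule $h\cdot\Gamma^A_X=(h\cdot\one)\Gamma^A_X$, all of which are provided by Lemma~\ref{lema.hGammaA.h1GammaA}.

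For part~(1), assume $X\in\Px_1(G)$ is a subgroup of $G$. The first observation is that the stabilizer $G_X=\{g\in G\mid g^{-1}\in X,\ gX=X\}$ is then just $X$: since $X$ is a subgroup, $g^{-1}\in X$ holds precisely when $g\in X$, and for such $g$ one has $gX=X$. Substituting $|G_X|=|X|$ and letting $g$ range over $X$ in the closed form, every summand is $P^A_{gX}=P^A_X$ and there are exactly $|X|$ of them, so the normalising scalar cancels and $\Gamma^A_X=P^A_X$. Specialising to $X=G$ (which lies in $\Px_1(G)$ and is a subgroup) gives $\Gamma^A_G=P^A_G=\prod_{g\in G}(g\cdot\one)$, the empty second product being $\one$. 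This part is purely routine.

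For part~(2), the equality $h\cdot\Gamma^A_G=(h\cdot\one)\Gamma^A_G$ is exactly the case $X=G$ of Lemma~\ref{lema.hGammaA.h1GammaA}, so the content is the identity $(h\cdot\one)\Gamma^A_G=\epsilon(h)\Gamma^A_G$ for all $h\in H$. I would obtain it from Lemma~\ref{lema.partialaction.idempotent.fE} applied with the central idempotent $E=\Gamma^A_G$ (a product of central idempotents $g\cdot\one$) and the convolution idempotent $f(h)=\epsilon(h)\one$. The two ``middle'' hypotheses $f(h_1)(h_2\cdot E)=(h_1\cdot E)f(h_2)=h\cdot E$ collapse, via the counit axiom and linearity of the action, to $\epsilon(h_1)(h_2\cdot\Gamma^A_G)=h\cdot\Gamma^A_G$, which is automatic. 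So the only thing to check is the boundary condition $x\cdot E=f(x)E$ for $x$ in the coradical $H_0=\K G$: for a grouplike $k$, Lemma~\ref{lema.hGammaA.h1GammaA} gives $k\cdot\Gamma^A_G=(k\cdot\one)\Gamma^A_G$, and since $k\cdot\one$ is one of the (commuting, idempotent) factors of $\Gamma^A_G$ we get $(k\cdot\one)\Gamma^A_G=\Gamma^A_G=\epsilon(k)\Gamma^A_G$; extending $\K$-linearly over $\K G$ yields $x\cdot\Gamma^A_G=\epsilon(x)\Gamma^A_G$ for all $x\in H_0$. Lemma~\ref{lema.partialaction.idempotent.fE} then gives $h\cdot\Gamma^A_G=f(h)\Gamma^A_G=\epsilon(h)\Gamma^A_G$ for all $h$, and chaining with the first equality finishes part~(2).

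The only step requiring an actual argument — the ``main obstacle'', such as it is — is the absorption identity $(k\cdot\one)\Gamma^A_G=\Gamma^A_G$ for grouplike $k$, which is what forces the coradical comparison in Lemma~\ref{lema.partialaction.idempotent.fE} to collapse onto $\epsilon$; it depends on $\Gamma^A_G$ being precisely $P^A_G=\prod_{g\in G}(g\cdot\one)$, so that $k\cdot\one$ literally occurs as a factor. Everything else is bookkeeping with the partial-action axioms and the two lemmas already at hand.
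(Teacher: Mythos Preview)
Your proposal is correct and follows essentially the same route as the paper: part~(1) is the same subgroup computation $G_X=X$ collapsing the sum to $P^A_X$, and part~(2) is the same application of Lemma~\ref{lema.partialaction.idempotent.fE} with $E=\Gamma^A_G$ and $f=\epsilon(-)\one$, the coradical hypothesis being verified via the absorption $(k\cdot\one)\Gamma^A_G=\Gamma^A_G$ for grouplike $k$. The only cosmetic difference is that you cite Lemma~\ref{lema.hGammaA.h1GammaA} for the identity $k\cdot\Gamma^A_G=(k\cdot\one)\Gamma^A_G$ on grouplikes, whereas the paper just reads it off the product form directly.
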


\begin{obs}
    Using the previous lemma, notice that
$$ 
    h\cdot (k\cdot a)\Gamma^A_G  =  (h_1\cdot 1\um)(h_2k\cdot a)\Gamma^A_G  
      =  (h_1\cdot 1\um)\Gamma^A_G(h_2k\cdot a)  
      =  \eps(h_1)(h_2k\cdot a)\Gamma^A_G  
      = (hk\cdot a)\Gamma^A_G,
 $$

and hence the restriction of the left partial action to $A \, \Gamma^A_G$ is actually a left global action.
\end{obs}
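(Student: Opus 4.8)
The claim is that once the partial $H$-action is restricted to the ideal $A\Gamma^A_G$ it becomes a genuine \emph{global} action of $H$ on the unital algebra $A\Gamma^A_G$. First I would collect what is already in hand: by Theorem~\ref{teorema.A.GammaA.direct.somma}, applied to the equivalence class of $G$ inside $\Px_1(G)$ --- a singleton, because $g^{-1}\in G$ and $gG=G$ for every $g\in G$, so that $\Gamma^A_G=P^A_G$ --- the subspace $A\Gamma^A_G$ is an $H$-stable unital subalgebra with identity element $\Gamma^A_G$, and $h\cdot(a\Gamma^A_G)=(h\cdot a)\Gamma^A_G$ for all $h\in H$, $a\in A$. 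Of the module-algebra axioms, $1_H\cdot x=x$ and $h\cdot(xy)=(h_{(1)}\cdot x)(h_{(2)}\cdot y)$ are inherited verbatim from $A$; the only statement carrying content is the strict associativity $h\cdot(k\cdot x)=(hk)\cdot x$ for $h,k\in H$ and $x\in A\Gamma^A_G$, which is precisely the assertion that the correction term $h_{(1)}\cdot 1\um$ occurring in (PA3) becomes inert on $A\Gamma^A_G$.

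The plan is to derive this from (PA3) using the single extra input that the identity $\Gamma^A_G$ of the ideal is $H$-invariant in the strongest sense, $h\cdot\Gamma^A_G=(h\cdot 1\um)\Gamma^A_G=\epsilon(h)\Gamma^A_G$ (Lemma~\ref{lema.hGammaG.epshGammaG}(2)), together with the centrality of $\Gamma^A_G$ (Lemma~\ref{lema.hGammaA.h1GammaA}). Writing $x=a\Gamma^A_G$ and applying (PA3) in $A$ yields $h\cdot(k\cdot x)=(h_{(1)}\cdot 1\um)(h_{(2)}k\cdot x)$; since $h_{(2)}k\cdot x$ again lies in the $H$-stable ideal $A\Gamma^A_G$ it equals $(h_{(2)}k\cdot x)\Gamma^A_G$, so after sliding that central $\Gamma^A_G$ next to $h_{(1)}\cdot 1\um$ and replacing $(h_{(1)}\cdot 1\um)\Gamma^A_G$ by $\epsilon(h_{(1)})\Gamma^A_G$, the counit identity $\epsilon(h_{(1)})h_{(2)}=h$ collapses the expression to $(hk\cdot x)\Gamma^A_G=hk\cdot x$. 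This is exactly the four-term computation displayed in the remark.

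I do not expect a genuine obstacle here: all the weight sits upstream, in Lemma~\ref{lema.hGammaG.epshGammaG}(2), whose proof is where one actually has to leave the grouplike part of $H$. There one first checks $k\cdot\Gamma^A_G=\epsilon(k)\Gamma^A_G$ for $k\in G$ directly --- $(k\cdot 1\um)$ is one of the idempotent factors of $\Gamma^A_G=\prod_{g\in G}(g\cdot 1\um)$, so multiplying by it does nothing --- which gives agreement on the coradical $H_0=\K G$, and then one passes to all of $H$ through Lemma~\ref{lema.partialaction.idempotent.fE} with $E=\Gamma^A_G$ and $f(h)=\epsilon(h)1\um$; the hypotheses of that lemma unwind to the counit axiom together with Lemma~\ref{lema.hGammaA.h1GammaA}, which in turn rests on the coradical comparison of convolution idempotents, Theorem~\ref{teorema.idempotent.convolution.coradical}. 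Granting that chain, the present statement is a one-line manipulation, and the only bookkeeping needing care is keeping track of which factors already lie in $A\Gamma^A_G$ so that trailing copies of $\Gamma^A_G$ may be inserted or deleted freely.
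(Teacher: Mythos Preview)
Your argument is correct and follows the same route as the paper: apply (PA3), use centrality of $\Gamma^A_G$ to slide it next to $h_{(1)}\cdot 1\um$, invoke Lemma~\ref{lema.hGammaG.epshGammaG}(2) to replace $(h_{(1)}\cdot 1\um)\Gamma^A_G$ by $\epsilon(h_{(1)})\Gamma^A_G$, and collapse with the counit. The only cosmetic difference is that you phrase the computation in terms of $x=a\Gamma^A_G$ while the paper keeps $a$ and an external factor $\Gamma^A_G$, but by Theorem~\ref{teorema.A.GammaA.direct.somma} these are the same thing.
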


	\subsection{$\Hpar$ for a pointed Hopf algebra} \label{se:Hpar_Pointed}

 The aim of this subsection is to study the algebra $\Hpar$ when $H$ is a pointed Hopf algebra with invertible antipode and with finite group of grouplikes $G:=G(H)$.

The linear map $[-]:H \to \Hpar$ defined by $H \ni h \mapsto [h] \in \Hpar$ is a partial representation. The corresponding convolution idempotent map $\epsilon^{[-]}:H \to \Hpar$ is actually the linear map
\[
    \epsilon_{-}:  H  \larr  \Hpar, \ \ \ \ 
      h  \lmap  \epsilon_h=[h_1][S(h_2)].
\]
Let us denote by $A$ the subalgebra $A = \apar=\langle \epsilon_h \in H \mid h \in H \rangle$ of $\Hpar$. By Proposition \ref{proposicao.epsg.idempotent.central}, if $g \in G$ then  $\epsilon_g$ is a central idempotent element in $A$. Recall that, for any $h \in H$ and $a=\epsilon_{k^1} \cdots \epsilon_{k^n} \in A$, the element
$$ h\cdot a = [h_1]a[S(h_2)], $$
is also an element of $A$ since, by equality \eqref{formula.H.action.on.Apar},
$$ h\cdot a = \epsilon_{h_1k^1} \cdots \epsilon_{h_nk^n}\epsilon_{h_{n+1}}. $$
The linear map $H\x A \to A$ that takes $h\x a$ to $h\cdot a$ defines a symmetric left partial action of $H$ on $A$ (see \cite[Thm 4.8]{alves2015partial}). In the previous section we obtained a system of central idempotents for $A$, given by:
$$ P_X^A = \prod_{x\in X}(x\cdot 1\um)\prod_{y\in G \smallsetminus X} (\one-(y\cdot 1\um)), $$
where $X \in \Px_1(G) = \{X\subseteq G \mid 1_H \in X\}$. Since $h\cdot 1\um = \eps_h$ for all $h \in H$, we can rewrite $P_X^A$ using the notation $\eps_g$, for $g\in G$. We shall also drop the superscript $A$ from $P_X^A$, which we simply denote by $P_X$, so 
$$ P_X =  \prod_{x\in X}\epsilon_x\prod_{y\in G \smallsetminus S}(1\um -\epsilon_y). $$
Now recall that for any $g \in G$: 
$$ g\cdot P_X = \left\{\begin{array}{cl}
    P_{gX}, & \textrm{if } g^{-1} \in X, \\
    0, & \textrm{otherwise},
\end{array}\right. $$
and also that $$P_XP_Y = \left\{\begin{array}{cc}
    P_X, & \textrm{if } X=Y, \\
    0, & \textrm{otherwise}.
\end{array}\right.$$
Since $A$ is a subalgebra of $\Hpar$ their identities coincide, i.e., $1_{\Hpar} = \one$, and therefore, by Proposition \ref{proposicao.A.as.PxA.sommation}, it follows that 
$$ 1_{\Hpar}  = \sum_{X \in \mathcal{P}_1(G)}P_X, \ \ \textrm{and} \ \ A = \bigoplus_{X \in P_1(G)} AP_X. $$
Notice that, although the elements $P_X$ are pairwise orthogonal central idempotents of $A$, they might not be central in $\Hpar$ in general; however, this is true of the idempotents $\Gamma_X$ introduced previously, as we will show next.

By Theorem \ref{teorema.Hpar.igual.AH}, we know that $\Hpar \simeq \underline{A\# H}$, and by Proposition \ref{lema.hGammaA.h1GammaA}, and Theorem \ref{teorema.A.GammaA.direct.somma}, the element 
$$\Gamma_X = \dfrac{1}{|G_X|}\sum_{g^{-1}\in X} g\cdot P_X = \dfrac{1}{|G_X|}\sum_{g^{-1}\in X} P_{gX},  $$
is a central idempotent of $A$ for any $X \in \Px_1(G)$. Moreover, by Theorem \ref{teorema.A.GammaA.direct.somma},  
$$ h\cdot \Gamma_X = (h\cdot 1\um)\Gamma_X = \eps_h\Gamma_X, \ \forall h \in H $$
and there exist $X_1, X_2, ..., X_n \in \Px_1(G)$, such that 
$$ A = \bigoplus_{k=1}^n (A\Gamma_{X_k}). $$
Hence for all $a\# h \in \underline{A\# H}$, 
$$ a\# h = \sum_{k=1}^n(a\Gamma_{X_k}\# h),$$
and since $\Gamma_X$ is central in $A$,
$$ a\# h = \sum_{k=1}^n(a\Gamma_{X_k}\# h) = \sum_{k=1}^n(\Gamma_{X_k}a\# h) = \sum_{k=1}^n(\Gamma_{X_k}\# 1_H)(a\# h). $$
Next, consider $a \in A$, $h \in H$ and $X\in \Px_1(G)$. We have: 
$$ 
     (a\# h)(\Gamma_X \# 1_H)  =  a(h_1\cdot \Gamma_X)\# h_2  =  a\eps_{h_1}\Gamma_X\# h_2  =  \Gamma_Xa\eps_{h_1}\# h_2  =  (\Gamma_X\# 1_H)(a\# h),
 $$

 which shows that $\Gamma_X\# 1_H$ is central in $\underline{A\# H}$. Furthermore, since the map   
 $$ \begin{array}{ccc}
     \underline{A\# H} & \larr & \Hpar  \\
     a\# h & \lmap & a[h]
 \end{array} $$
 is an isomorphism of algebras, $\Gamma_X$ is a central idempotent of $\Hpar$. We have just proved the following.

\begin{proposicao}\label{proposicao.Gamma.central.idempotent}
     Let $H$ be a pointed Hopf algebra with invertible antipode and finite group $G$ of grouplike elements.
    For any $X \in \mathcal{P}_1(G)$, the element
    $$ \Gamma_X = \dfrac{1}{|G_X|}\sum_{g\in G} g\cdot P_X = \dfrac{1}{|G_X|}\sum_{g^{-1}\in X}P_{gX}, $$
    is a central idempotent element in $\Hpar$. Furthermore, for any $h\in H$, we have:
    $$ h\cdot \Gamma_X = \epsilon_h\Gamma_X. $$
\end{proposicao}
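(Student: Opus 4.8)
The plan is to obtain this as a direct application of Theorem~\ref{teorema.A.GammaA.direct.somma} to the particular symmetric partial $H$-module algebra $A = \apar$, followed by a transfer of centrality from $\apar$ to $\Hpar$ through the isomorphism $\Hpar \simeq \underline{\apar \# H}$ of Theorem~\ref{teorema.Hpar.igual.AH}.

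First I would take $A = \apar$ equipped with its canonical partial action $h \cdot a = [h_1]\,a\,[S(h_2)]$, noting that $h \cdot \one = \eps_h$. Theorem~\ref{teorema.A.GammaA.direct.somma} (and Lemma~\ref{lema.hGammaA.h1GammaA}) then yields at once that $\Gamma_X$ is a central idempotent of $\apar$ and that $h \cdot \Gamma_X = (h \cdot \one)\Gamma_X = \eps_h\Gamma_X$ for every $h \in H$. Since $\apar$ is a subalgebra of $\Hpar$ with the same unit, $\Gamma_X$ is automatically still an idempotent of $\Hpar$; so the whole matter reduces to showing that $\Gamma_X$ is \emph{central} in $\Hpar$.

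To see that, I would compute inside $\underline{\apar \# H}$, whose multiplication is $(a \# h)(b \# k) = a(h_1 \cdot b) \# h_2 k$. For arbitrary $a \in \apar$ and $h \in H$,
\[
(a \# h)(\Gamma_X \# 1_H) = a(h_1 \cdot \Gamma_X) \# h_2 = a\,\eps_{h_1}\Gamma_X \# h_2 = \Gamma_X\,a\,\eps_{h_1} \# h_2 = (\Gamma_X \# 1_H)(a \# h),
\]
where the central equalities use $h_1 \cdot \Gamma_X = \eps_{h_1}\Gamma_X$ together with the centrality of $\Gamma_X$ in $\apar$ and the fact that $\eps_{h_1}\in\apar$. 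Because the elements $a \# h$ span $\underline{\apar \# H}$, this shows $\Gamma_X \# 1_H$ is central; pushing it through the algebra isomorphism $\underline{\apar \# H} \to \Hpar$, $a \# h \mapsto a[h]$ — under which $\Gamma_X \# 1_H \mapsto \Gamma_X$ — we conclude that $\Gamma_X$ is a central idempotent of $\Hpar$, which, combined with $h\cdot\Gamma_X = \eps_h\Gamma_X$, is the full statement.

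The only real content — and the step I would flag — is the identity $h \cdot \Gamma_X = \eps_h\Gamma_X = (h \cdot \one)\Gamma_X$ furnished by Theorem~\ref{teorema.A.GammaA.direct.somma}: it is precisely this that allows $\Gamma_X$ to be dragged through the $H$-leg of the smash product. For a generic central idempotent $P_X$ of $\apar$ the analogous element $P_X \# 1_H$ need not be central in $\underline{\apar \# H}$, which is exactly why one must pass to the ``averaged'' idempotents $\Gamma_X$; everything else is routine bookkeeping with the smash-product multiplication and the fact that the relevant elements all lie in $\apar$.
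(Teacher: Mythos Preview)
Your proposal is correct and follows essentially the same route as the paper: apply Lemma~\ref{lema.hGammaA.h1GammaA} (via Theorem~\ref{teorema.A.GammaA.direct.somma}) to $A=\apar$ to get $h\cdot\Gamma_X=\eps_h\Gamma_X$ and centrality in $\apar$, then compute $(a\#h)(\Gamma_X\#1_H)=(\Gamma_X\#1_H)(a\#h)$ in $\underline{\apar\#H}$ and transport centrality to $\Hpar$ via the isomorphism $a\#h\mapsto a[h]$. Your closing remark about why the averaged $\Gamma_X$ is needed rather than $P_X$ is also exactly the point the paper makes.
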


Using the equivalence relation on $\Px_1(G)$ and the corresponding notations introduced in the previous section, if
$\mathcal{O}_1, \mathcal{O}_2, ..., \mathcal{O}_n$ denote the equivalence classes and $X_1, X_2, ..., X_n\in\Px_1(G)$, some representatives, $X_k \in \mathcal{O}_k$, for all $k=1,2,...,n$; we have: 
$$ 1_{\Hpar} = \sum_{k=1}^n\Bigl(\sum_{X \sim X_k}P_X\Bigr) = \sum_{k=1}^n\Gamma_{X_k}, $$
and by the Theorems \ref{teorema.A.GammaA.direct.somma} and \ref{teorema.Hpar.igual.AH}, we obtain isomorphisms: 
$$ \Hpar \cong \underline{A\# H} = \bigoplus_{k=1}^n\Bigl(\underline{A\Gamma_{X_k}\# H}\Bigr), $$
Furthermore, since $\Gamma_X$ is central on $A$, we have:
$$ \Hpar \cong \underline{A\# H} = \bigoplus_{k=1}^n\Bigl(\underline{A\Gamma_{X_k}\# H}\Bigr) = \bigoplus_{k=1}^n\Bigl(\underline{\Gamma_{X_k}A\# H}\Bigr). $$
Also, since $\Gamma_X\# 1_H$ is a central idempotent of $\underline{A\# H}$, we also have:

$$ \Hpar \cong \bigoplus_{k=1}^n(\Gamma_{X_k}\# 1_H)\underline{A\# H} $$
Finally, since $a\# h \mapsto a[h]$ defines an algebra isomorphism from $\underline{A\# H}$ to $\Hpar$, the latter may be written as a direct sum of unital ideals as:
$$ \Hpar = \bigoplus_{k=1}^n \bigl(\Gamma_{X_k}\Hpar\bigr). $$

Let us emphasize the fact that, since for all $X \in \Px_1(G)$, $\Gamma_X\# 1_H$ is a central idempotent of $\underline{A\# H}$, and since $a\# h \mapsto a[h]$ defines an isomorphism from $\underline{A\# H}$ to $\Hpar$, we have that $\Gamma_X$ is a central idempotent of $\Hpar$, hence 
$$ \Gamma_X\Hpar = \Hpar\Gamma_X.$$

Notice that we have proven the following, which is the main theorem of this part.

\begin{teorema}\label{proposicao.pointed.Hpar.direct.sum}
      Let $H$ be a pointed Hopf algebra with invertible antipode and finite group $G$ of grouplike elements.
      If $X_1, X_2, ..., X_n$ constitute a complete set of representatives of the equivalence classes $\mathcal{O}_1, \mathcal{O}_2, \ldots , \mathcal{O}_n$ in $\mathcal{P}_1(G)$ then
    $$ \Hpar = \bigoplus_{k=1}^{n}(\Hpar\Gamma_{X_k}), $$
    where $\Hpar\Gamma_{X_k}$ is an ideal of $\Hpar$ for all $k=1,2, ..., n$.
\end{teorema}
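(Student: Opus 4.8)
The plan is to push the decomposition of the base algebra $A := \apar$ obtained in Theorem~\ref{teorema.A.GammaA.direct.somma} through the isomorphism $\Hpar \simeq \underline{\apar\#H}$ of Theorem~\ref{teorema.Hpar.igual.AH}. Concretely, Theorem~\ref{teorema.A.GammaA.direct.somma} supplies, for representatives $X_1,\dots,X_n$ of the equivalence classes in $\Px_1(G)$, a complete set $\Gamma_{X_1},\dots,\Gamma_{X_n}$ of pairwise orthogonal central idempotents of $A$ with $1_A = 1_{\Hpar} = \sum_{k=1}^n \Gamma_{X_k}$, and moreover each satisfies the crucial identity $h\cdot\Gamma_{X_k} = (h\cdot 1_A)\Gamma_{X_k} = \eps_h\,\Gamma_{X_k}$ for all $h\in H$ (Lemma~\ref{lema.hGammaA.h1GammaA}). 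Since a complete set of orthogonal central idempotents in any unital algebra induces a decomposition into two-sided unital ideals, the theorem will follow as soon as we know these idempotents remain central after passing to the smash product.

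The heart of the argument is therefore the verification that each $\Gamma_{X_k}\#1_H$ is a central idempotent of $\underline{\apar\#H}$. Idempotency is inherited from $A$ because $\Gamma_{X_k}$ is a central idempotent there. For centrality I would compute, using the product of the smash together with $h\cdot\Gamma_{X_k}=\eps_h\Gamma_{X_k}$ and the centrality of $\Gamma_{X_k}$ in $A$ (note $\eps_{h_{(1)}}\in A$):
$$ (a\#h)(\Gamma_{X_k}\#1_H) = a\,(h_{(1)}\cdot\Gamma_{X_k})\# h_{(2)} = a\,\eps_{h_{(1)}}\Gamma_{X_k}\# h_{(2)} = \Gamma_{X_k}\,a\,\eps_{h_{(1)}}\# h_{(2)} = (\Gamma_{X_k}\#1_H)(a\#h). $$
Thus the only substantial input is the identity $h\cdot\Gamma_{X_k}=\eps_h\Gamma_{X_k}$, and this is where the real work lies: it comes from applying Lemma~\ref{lema.partialaction.idempotent.fE} — the uniqueness-of-convolution-idempotents principle, i.e.\ agreement of convolution idempotents that coincide on the coradical $H_0=\K G$ — to the central idempotent $\Gamma_{X_k}$, after checking that lemma's hypotheses. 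The verification of those hypotheses reduces to the grouplike case, where $\Gamma_{X_k}$ is built as an ``average'' of the $P^A_X$ under the partial $G$-action on $\Px_1(G)$; this reduction is the genuine obstacle and has already been carried out in the discussion leading to Theorem~\ref{teorema.A.GammaA.direct.somma}.

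Finally, having the pairwise orthogonal central idempotents $\Gamma_{X_k}\#1_H$ summing to $1_{\underline{\apar\#H}}$, I obtain the direct sum of two-sided unital ideals $\underline{\apar\#H} = \bigoplus_{k=1}^n (\Gamma_{X_k}\#1_H)\,\underline{\apar\#H}$, and transport it along the algebra isomorphism $\underline{\apar\#H}\to\Hpar$, $a\#h\mapsto a[h]$, which carries $\Gamma_{X_k}\#1_H$ to the (now central) idempotent $\Gamma_{X_k}$ of $\Hpar$. Since $\Gamma_{X_k}$ is central in $\Hpar$ we have $\Hpar\,\Gamma_{X_k}=\Gamma_{X_k}\,\Hpar$, and the decomposition reads $\Hpar = \bigoplus_{k=1}^n \Hpar\,\Gamma_{X_k}$ with each summand a two-sided ideal, which is exactly the assertion. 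No step beyond the identity $h\cdot\Gamma_{X_k}=\eps_h\Gamma_{X_k}$ should present any difficulty; everything else is routine bookkeeping with idempotents and the smash-product multiplication.
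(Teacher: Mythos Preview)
Your proposal is correct and follows essentially the same approach as the paper: you invoke Theorem~\ref{teorema.A.GammaA.direct.somma} and Lemma~\ref{lema.hGammaA.h1GammaA} to obtain the complete set of orthogonal central idempotents $\Gamma_{X_k}$ in $A$ with $h\cdot\Gamma_{X_k}=\eps_h\Gamma_{X_k}$, verify centrality of $\Gamma_{X_k}\#1_H$ in $\underline{\apar\#H}$ via the same computation the paper gives, and transport the resulting decomposition along the isomorphism $\underline{\apar\#H}\to\Hpar$.
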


In the following, we shall sometimes denote the ideals of $A$ generated by $P_X$, as either $P_XA$ or $AP_X$. Similarly, the ideals of $\Hpar$ generated by $\Gamma_X$, will sometimes be represented as $\Hpar\Gamma_X$ or $\Gamma_X\Hpar$, depending on convenience.

We now consider a very important central idempotent of $\Hpar$ given by:
$$ \Gamma_G = \dfrac{1}{|G_G|}\sum_{g\in G}g\cdot P_G = \dfrac{1}{|G|}\sum_{g^{-1}\in G} P_{gG} = \displaystyle P_G = \prod_{g\in G}\epsilon_g. $$ 
The element $\Gamma_G$ has the following basic properties:
$$ k\cdot \Gamma_G = \displaystyle \epsilon_k\prod_{g \in G} \epsilon_g = \displaystyle \prod_{g \in G}\epsilon_g = \Gamma_G = \epsilon(k)\Gamma_G, $$
for all $k \in G$. We also have:
$$ h\cdot \Gamma_G = \epsilon(h_1)\bigl(h_2\cdot \Gamma_G\bigr) = \bigl(h_1\cdot \Gamma_G\bigr)\epsilon(h_2), $$
for all $h \in H$. It follows from Lemma \ref{lema.partialaction.idempotent.fE}, applied to the convolution idempotents $h \mapsto \epsilon(h) \Gamma_G$ and $h \mapsto (h \cdot \Gamma_G)$, that
$$ h\cdot \Gamma_G = \epsilon(h)\Gamma_G, \ \ \textrm{for all }h \in H. $$

We have proved the following lemma.

\begin{lema}\label{lema.hGammaG.epshGammaG}
          Let $H$ be a pointed Hopf algebra with invertible antipode and finite group $G$ of grouplike elements. Then, the element 
    $ \displaystyle \Gamma_G = \prod_{g \in G} \epsilon_g, $ satisfies: 
    $$ h\cdot \Gamma_G = \epsilon(h)\Gamma_G,$$ 
    for all $h \in H$. 
\end{lema}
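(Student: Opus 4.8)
The plan is to pin down $h \cdot \Gamma_G$ by comparing, inside the partial $H$-module algebra $A$, the two convolution-idempotent maps $\phi,f\colon H \to A$ given by $\phi(h)=h\cdot\Gamma_G$ and $f(h)=\epsilon(h)\Gamma_G$. The map $\phi$ is a convolution idempotent because $\Gamma_G$ is idempotent and the action satisfies (PA$2$) of Definition~\ref{definicao.acao.parcial}; the map $f$ is a convolution idempotent since $f(h_{(1)})f(h_{(2)})=\epsilon(h_{(1)})\epsilon(h_{(2)})\Gamma_G^2=\epsilon(h)\Gamma_G=f(h)$. Also $\Gamma_G=P^A_G$ is a central idempotent of $A$ by Proposition~\ref{proposicao.A.as.PxA.sommation}. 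Since $H$ is pointed, $H_0=\K G$, so by Theorem~\ref{teorema.idempotent.convolution.coradical} --- in the packaged form of Lemma~\ref{lema.partialaction.idempotent.fE}, with $E=\Gamma_G$ --- it suffices to check that $\phi$ and $f(-)\Gamma_G$ agree on $\K G$ and that $f(h_{(1)})(h_{(2)}\cdot\Gamma_G)=(h_{(1)}\cdot\Gamma_G)f(h_{(2)})=h\cdot\Gamma_G$ for every $h\in H$.

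First I would settle the coradical. For $g\in G$ the action by $g$ is multiplicative on $A$, and (PA$3$) together with $\Delta(g)=g\otimes g$ gives $g\cdot(k\cdot 1_A)=(g\cdot 1_A)(gk\cdot 1_A)=\epsilon_g\epsilon_{gk}$ for each $k\in G$; hence
\[
g\cdot\Gamma_G \;=\; \prod_{k\in G}\bigl(g\cdot(k\cdot 1_A)\bigr) \;=\; \prod_{k\in G}\epsilon_g\epsilon_{gk} \;=\; \epsilon_g\prod_{k\in G}\epsilon_{gk} \;=\; \epsilon_g\Gamma_G,
\]
using that the $\epsilon_k$ are central idempotents and that $k\mapsto gk$ permutes $G$. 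As $\epsilon_g$ is one of the factors of $\Gamma_G=\prod_{x\in G}\epsilon_x$, we get $\epsilon_g\Gamma_G=\Gamma_G$, so $g\cdot\Gamma_G=\Gamma_G=\epsilon(g)\Gamma_G$. By linearity $x\cdot\Gamma_G=\epsilon(x)\Gamma_G$ for all $x\in\K G=H_0$, and $f(x)\Gamma_G=\epsilon(x)\Gamma_G^2=\epsilon(x)\Gamma_G$, so $\phi$ and $f(-)\Gamma_G$ coincide on $H_0$.

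For the two remaining identities I would use that $\Gamma_G$ is central, the counit axiom $\epsilon(h_{(1)})h_{(2)}=h$, and the fact that $h\cdot\Gamma_G=h\cdot(\Gamma_G\Gamma_G)=(h\cdot\Gamma_G)\Gamma_G$ --- which is Theorem~\ref{teorema.A.GammaA.direct.somma} applied with $a=\Gamma_G$ and $X=G$ --- to compute
\[
f(h_{(1)})(h_{(2)}\cdot\Gamma_G) \;=\; \epsilon(h_{(1)})\,\Gamma_G\,(h_{(2)}\cdot\Gamma_G) \;=\; \Gamma_G\bigl(\epsilon(h_{(1)})h_{(2)}\cdot\Gamma_G\bigr) \;=\; \Gamma_G(h\cdot\Gamma_G) \;=\; h\cdot\Gamma_G,
\]
and symmetrically $(h_{(1)}\cdot\Gamma_G)f(h_{(2)})=h\cdot\Gamma_G$. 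Thus the hypotheses of Lemma~\ref{lema.partialaction.idempotent.fE} hold for $E=\Gamma_G$ and this $f$, and the lemma yields $h\cdot\Gamma_G=f(h)\Gamma_G=\epsilon(h)\Gamma_G^2=\epsilon(h)\Gamma_G$ for all $h\in H$, which is the claim.

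The argument is essentially bookkeeping on top of machinery already in place; the one genuinely substantive step is transporting an identity valid on the grouplikes to one valid on all of $H$, which is exactly the role of the coradical comparison (Theorem~\ref{teorema.idempotent.convolution.coradical}, packaged as Lemma~\ref{lema.partialaction.idempotent.fE}). The point to watch is to present the hypotheses of Lemma~\ref{lema.partialaction.idempotent.fE} in precisely the asymmetric shape it demands --- the right multiplication by $E$, and the one-sided condition $f\ast\phi=\phi\ast f=\phi$ with $\phi$ (not $f$) in the role of the ``smaller'' idempotent --- since reversing this orientation would make the hypotheses impossible to verify.
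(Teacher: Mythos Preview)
Your proof is correct and follows essentially the same approach as the paper: verify the identity on grouplikes and then invoke Lemma~\ref{lema.partialaction.idempotent.fE} with $E=\Gamma_G$ to extend it to all of $H$. The only difference is a minor point of economy: the paper takes $f(h)=\epsilon(h)1_A$ rather than your $f(h)=\epsilon(h)\Gamma_G$, which makes the hypothesis $f(h_{(1)})(h_{(2)}\cdot\Gamma_G)=(h_{(1)}\cdot\Gamma_G)f(h_{(2)})=h\cdot\Gamma_G$ immediate from the counit axiom alone, without needing to appeal to Theorem~\ref{teorema.A.GammaA.direct.somma} for the extra factor of $\Gamma_G$.
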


We can now prove the following.

\begin{teorema}\label{teorema.thetaH.multiplicative.section.pH}
          Let $H$ be a pointed Hopf algebra with invertible antipode and finite group $G$ of grouplike elements. Consider the element $ \displaystyle \Gamma_G = \prod_{g \in G} \epsilon_g. $ Then the map 
    $$\begin{array}{cccc}
        \theta_H: & H & \larr & \Hpar  \\
         & h & \lmap & \Gamma_G[h]
    \end{array} $$
    is a multiplicative map. Moreover, $\theta_H$ is a section of $p_H:\Hpar \to H$ defined by $p_H([h^1][h^2]...[h^n]) = h^1h^2...h^n$, for all $h^1, h^2, ..., h^n \in H$.
\end{teorema}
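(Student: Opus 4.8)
The plan is to check the two assertions separately: that $\theta_H$ is multiplicative, and that $p_H\circ\theta_H=\mathrm{id}_H$. The section property is the easy half. Since $p_H\colon\Hpar\to H$, $[h^1]\cdots[h^n]\mapsto h^1\cdots h^n$, is an algebra homomorphism (equation~\eqref{eq:removing_brackets}), we get $p_H(\theta_H(h))=p_H(\Gamma_G)\,p_H([h])=p_H(\Gamma_G)\,h$; and because each $g\in G$ is grouplike, $\Gamma_G=\prod_{g\in G}\epsilon_g=\prod_{g\in G}[g][S(g)]=\prod_{g\in G}[g][g^{-1}]$, whence $p_H(\Gamma_G)=\prod_{g\in G}gg^{-1}=\prod_{g\in G}1_H=1_H$. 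Thus $p_H(\theta_H(h))=h$ for all $h$, i.e. $\theta_H$ is a section of $p_H$ (not a unital one, since $\theta_H(1_H)=\Gamma_G$).

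For multiplicativity, the key point is that $\Gamma_G$ is a \emph{central} idempotent of $\Hpar$ (Proposition~\ref{proposicao.Gamma.central.idempotent} with $X=G$), so $\theta_H(h)\theta_H(k)=\Gamma_G[h]\,\Gamma_G[k]=\Gamma_G^2[h][k]=\Gamma_G[h][k]$, and it suffices to prove the identity $\Gamma_G[h][k]=\Gamma_G[hk]$ in $\Hpar$ for all $h,k\in H$. Here I would use Lemma~\ref{lema.hGammaG.epshGammaG}, which says $h\cdot\Gamma_G=[h_{(1)}]\Gamma_G[S(h_{(2)})]=\epsilon(h)\Gamma_G$; applying this after one further comultiplication yields the relation $\sum[h_{(1)}]\Gamma_G[S(h_{(2)})]\otimes h_{(3)}=\Gamma_G\otimes h$ in $\Hpar\otimes H$. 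Pushing this forward along the linear maps $x\otimes l\mapsto x[l]$ and $x\otimes l\mapsto x[lk]$ gives
\[
\sum[h_{(1)}]\Gamma_G[S(h_{(2)})][h_{(3)}]=\Gamma_G[h],\qquad
\sum[h_{(1)}]\Gamma_G[S(h_{(2)})][h_{(3)}k]=\Gamma_G[hk].
\]
Now axiom~\ref{PR5} for the partial representation $[-]$ (namely $[S(a_{(1)})][a_{(2)}b]=[S(a_{(1)})][a_{(2)}][b]$), applied under the sum to the second leg of $\Delta h$, rewrites $[S(h_{(2)})][h_{(3)}k]=[S(h_{(2)})][h_{(3)}][k]$; factoring $[k]$ out of the second sum and comparing with the first gives $\Gamma_G[hk]=\big(\sum[h_{(1)}]\Gamma_G[S(h_{(2)})][h_{(3)}]\big)[k]=\Gamma_G[h][k]$, as required. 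Combining, $\theta_H(h)\theta_H(k)=\Gamma_G[h][k]=\Gamma_G[hk]=\theta_H(hk)$; conceptually this says $\theta_H$ realizes $H$ as a global representation on the unital ideal $\Gamma_G\Hpar$, whose unit is $\Gamma_G=\theta_H(1_H)$.

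An equivalent, and perhaps more transparent, route passes through $\Hpar\cong\underline{\apar\# H}$ (Theorem~\ref{teorema.Hpar.igual.AH}): there $\theta_H(h)$ corresponds to $\Gamma_G\# h$, and since $(h\cdot1_{\apar})\Gamma_G=\epsilon_h\Gamma_G=\epsilon(h)\Gamma_G$ — combining Proposition~\ref{proposicao.Gamma.central.idempotent} and Lemma~\ref{lema.hGammaG.epshGammaG} — one finds $\Gamma_G\# h=\Gamma_G\otimes h$ with no correction term; then $(\Gamma_G\otimes h)(\Gamma_G\otimes k)=\Gamma_G(h_{(1)}\cdot\Gamma_G)\otimes h_{(2)}k=\Gamma_G\otimes hk$ using $h_{(1)}\cdot\Gamma_G=\epsilon(h_{(1)})\Gamma_G$ and $\Gamma_G^2=\Gamma_G$, and transporting back along $a\# h\mapsto a[h]$ gives the claim. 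In either approach the computation itself is short — the genuine content was already packaged into Lemma~\ref{lema.hGammaG.epshGammaG} and Proposition~\ref{proposicao.Gamma.central.idempotent} — so the only place that needs care is the bookkeeping of Sweedler legs: promoting the two-legged identity $h\cdot\Gamma_G=\epsilon(h)\Gamma_G$ to a three-legged one and selecting the form of the partial-representation axioms under which the bracket $[h_{(3)}k]$ (respectively, the smash-product multiplication) factors.
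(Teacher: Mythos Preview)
Your proof is correct and follows essentially the same strategy as the paper: both reduce multiplicativity (via centrality of $\Gamma_G$, Proposition~\ref{proposicao.Gamma.central.idempotent}) to the identity $\Gamma_G[h][k]=\Gamma_G[hk]$, both derive this from Lemma~\ref{lema.hGammaG.epshGammaG}, and the section argument is identical. The only difference is cosmetic: the paper starts from the standing identity $[h][k]=\epsilon_{h_{(1)}}[h_{(2)}k]$ in $\Hpar$ and then writes $\Gamma_G[h][k]=\Gamma_G\epsilon_{h_{(1)}}[h_{(2)}k]=(h_{(1)}\cdot\Gamma_G)[h_{(2)}k]=\epsilon(h_{(1)})\Gamma_G[h_{(2)}k]=\Gamma_G[hk]$ in one line, whereas you reach the same conclusion by promoting $h\cdot\Gamma_G=\epsilon(h)\Gamma_G$ to a three-legged identity and invoking \ref{PR5}; your alternative smash-product computation is likewise valid and in the same spirit.
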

\begin{proof}
    First consider $h,k \in H$ and notice that in $\Hpar$ we have $[h][k] = \epsilon_{h_1}[h_1k]$. Recall also that for all $h \in H$ we have that $ h\cdot\Gamma_X = \epsilon_h\Gamma_X $; therefore, for all $X\subseteq G$ with $1\in X$, it follows that
    $$ \Gamma_X[h][k] = \Gamma_X\epsilon_{h_1}[h_2k] = \epsilon_{h_1}\Gamma_X[h_2k] = \bigl(h_1\cdot\Gamma_X\bigr)[h_2k]. $$
    When $X=G$, by Lemma \ref{lema.hGammaG.epshGammaG}, $h\cdot \Gamma_G=\epsilon(h)\Gamma_G$, hence
    $$ \Gamma_G[h][k] = \bigl(h_1\cdot\Gamma_G\bigr)[h_2k] = \epsilon(h_1)\Gamma_G[h_2k] = \Gamma_G[hk], $$
    for all $h,k \in H$. Using Proposition \ref{proposicao.Gamma.central.idempotent} we conclude that $\theta_H$ is multiplicative:
    $$
        \theta_H(h)\theta_H(k)  =  \Gamma_G[h]\Gamma_G[k]  =  \bigl(\Gamma_G\bigr)^2[h][k]  =  \Gamma_G[hk]  = \theta_H(hk),
    $$
    for all $h,k \in H$.

    Next, in order to see that $\theta_H:\Hpar \to H$ is a section of $p_H: \Hpar \to H$, notice that $$ p_H([h_1][S(h_2)]) = h_1S(h_2) = \epsilon(h)1_H $$
    for each $h \in H$, hence 
    $$ p_H(\Gamma_G) = \prod_{g\in G}p_H(\epsilon_g) = \prod_{g\in G}\epsilon(g)1_H = \biggl(\prod_{g\in G}1_\K\biggr)1_H = 1_H.  $$

    It follows that for any $h \in H$: 
    $$ \bigl(p_H\circ\theta_H\bigr)(h) = p_H\bigl(\Gamma_G[h]\bigr) = 1_Ah = h. $$
    
\end{proof}

An interesting consequence of the last theorem is the following.

\begin{corolario}
          Let $H$ be a pointed Hopf algebra with invertible antipode and finite group $G$ of grouplike elements, and consider the element $ \displaystyle \Gamma_G = \dfrac{1}{|G|}\sum_{g\in G}g\cdot P_G = P_G = \prod_{g \in G} \epsilon_g$. The ideal $\Gamma_G\Hpar$ of $\Hpar$ generated by $\Gamma_G$ is isomorphic to $H$ as an algebra. In other words, we have
    $$ \xymatrix{
    H \ar[r]_-{\theta_H}^-{\simeq} & \Gamma_G\Hpar \ar@{^{(}->}[r] & \Hpar
     }, $$
    and since $\Gamma_G$ is a central idempotent of $H_{\textrm{par}}$, by Proposition \ref{proposicao.Gamma.central.idempotent}, we have 
    $$ H_{\textrm{par}} = (1_{\Hpar}-\Gamma_G)H_{\textrm{par}} \oplus \Gamma_GH_{\textrm{par}} \simeq (1_{\Hpar}-\Gamma_G)H_{\textrm{par}} \oplus H. $$
    
\end{corolario}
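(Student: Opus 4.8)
The plan is to leverage \thref{thetaH.multiplicative.section.pH}, which already establishes that $\theta_H\colon H\to\Hpar$, $h\mapsto\Gamma_G[h]$, is multiplicative and a section of $p_H$. First I would observe that the image of $\theta_H$ lands inside the ideal $\Gamma_G\Hpar$: since $\Gamma_G$ is a central idempotent (\prref{Gamma.central.idempotent}), every element $\Gamma_G[h^1]\cdots[h^n]$ of $\Gamma_G\Hpar$ equals $\Gamma_G[h^1]\cdot\Gamma_G[h^2]\cdots\Gamma_G[h^n]=\theta_H(h^1)\cdots\theta_H(h^n)=\theta_H(h^1\cdots h^n)$ using multiplicativity, so $\theta_H$ is in fact surjective onto $\Gamma_G\Hpar$. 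Also $\theta_H(1_H)=\Gamma_G[1_H]=\Gamma_G$, which is the unit of the (non-unital in $\Hpar$, but unital in its own right) ideal $\Gamma_G\Hpar$, so $\theta_H$ is a unital algebra map onto $\Gamma_G\Hpar$.

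Next I would establish injectivity of $\theta_H$, and this is where I expect the only real content to lie. The key is that $\theta_H$ has a one-sided inverse supplied by $p_H$: by \thref{thetaH.multiplicative.section.pH} we have $p_H\circ\theta_H=\id_H$. Hence $\theta_H$ is injective, and combined with the surjectivity onto $\Gamma_G\Hpar$ just noted, $\theta_H\colon H\to\Gamma_G\Hpar$ is an algebra isomorphism. This gives the displayed diagram $H\xrightarrow{\ \theta_H\ }\Gamma_G\Hpar\hookrightarrow\Hpar$.

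For the second assertion, I would invoke \prref{Gamma.central.idempotent} once more: $\Gamma_G$ is a central idempotent of $\Hpar$, so $1_{\Hpar}-\Gamma_G$ is a central idempotent as well, and the two are orthogonal with sum $1_{\Hpar}$. The standard Peirce decomposition then yields $\Hpar=(1_{\Hpar}-\Gamma_G)\Hpar\oplus\Gamma_G\Hpar$ as a direct sum of (unital) ideals, and substituting $\Gamma_G\Hpar\simeq H$ from the first part gives
$$\Hpar=(1_{\Hpar}-\Gamma_G)\Hpar\oplus\Gamma_G\Hpar\simeq(1_{\Hpar}-\Gamma_G)\Hpar\oplus H,$$
as claimed. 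The only genuine obstacle is confirming that $p_H\circ\theta_H=\id_H$ forces injectivity and that the composite really is onto $\Gamma_G\Hpar$ (not just into it); both follow formally from multiplicativity of $\theta_H$ and centrality of $\Gamma_G$, so no delicate computation is required beyond what \thref{thetaH.multiplicative.section.pH} already provides.
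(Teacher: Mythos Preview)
Your proposal is correct and is precisely the argument the paper leaves implicit: the corollary is stated without proof because it follows formally from \thref{thetaH.multiplicative.section.pH} and \prref{Gamma.central.idempotent} exactly along the lines you describe (surjectivity of $\theta_H$ onto $\Gamma_G\Hpar$ via centrality and multiplicativity, injectivity from $p_H\circ\theta_H=\id_H$, and the standard Peirce decomposition).
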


\subsection{On the structure of $A_{par}$}

In order to describe $H_{par}$ we do need to describe the subalgebra $A$. 
Since we have the direct sum decomposition 
\[
A \Gamma_{X}^A = \bigoplus_{g^{-1} \in X} A P_{gX}^A 
\]
as an algebra, it is enough to describe each (sub)component $A P_{gX}^A$. In this subsection we will prove that $A P_X \simeq A P_{G_X}$ as algebras, and then that if $X \sim Y$ in $\Px_1(G)$ then $A P_X \simeq A P_Y$.

We begin by the construction of an induced partial action, which extends Proposition 1 of \cite{alves2010enveloping}.

\begin{proposicao} \label{prop:induced.partial.action}
  Let $B$ be a partial left $H$-module algebra, with partial action given by $h\x b \mapsto h\cdot b$. Let $E$ be a central idempotent of $B$ and let $B'=\langle E \rangle$ be the ideal generated by $E$. Then the map 
$$ \begin{array}{cccl}
    \rhd: & H\x B' & \larr & B', \\
     & h\x b' & \lmap & h\rhd b' = (h\cdot b')E,
\end{array} $$  
is a partial $H$-action on $B'$, which is symmetric if the original partial action is symmetric. 
\end{proposicao}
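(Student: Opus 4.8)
The plan rests on one structural observation: since $E$ is a central idempotent of $B$, the two-sided ideal $B'=\langle E\rangle$ coincides with $BE=EB$ and is a unital algebra whose identity is $E$, and moreover the map $\pi:B\to B'$, $\pi(b)=bE$, is a surjective algebra homomorphism with $\pi(1_B)=E$ (both facts use only $E^2=E$ and the centrality of $E$). In this notation $h\rhd b'=\pi(h\cdot b')$ for $b'\in B'$, so $\rhd$ does take values in $B'$ and is a well-defined linear map $H\x B'\to B'$; it then remains to verify axioms (PA1)--(PA3), and (PA4) in the symmetric case.

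Axioms (PA1) and (PA2) are quick. For (PA1), $1_H\rhd b'=(1_H\cdot b')E=b'E=b'$ because $b'\in B'$. For (PA2), using that $\pi$ is multiplicative together with (PA2) for the original action,
\[
h\rhd(b'c')=\pi\big(h\cdot(b'c')\big)=\pi\big((h_{(1)}\cdot b')(h_{(2)}\cdot c')\big)=\pi(h_{(1)}\cdot b')\,\pi(h_{(2)}\cdot c')=(h_{(1)}\rhd b')(h_{(2)}\rhd c').
\]

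The heart of the proof is (PA3): $h\rhd(k\rhd b')=(h_{(1)}\rhd E)(h_{(2)}k\rhd b')$. I would expand the left-hand side by applying $\pi$ and then (PA2) and (PA3) for the original action; after pushing the stray factors of $E$ to the right using centrality and collapsing them with $E^2=E$, this brings the left-hand side to the shape $\big(h_{(1)}\cdot(k\cdot b')\big)(h_{(2)}\cdot E)\,E$. The pivotal step is then to read this as $\big(h\cdot((k\cdot b')E)\big)E$, to use that $E$ is central \emph{in $B$} in order to replace $(k\cdot b')E$ by $E(k\cdot b')$ inside that one application of the action, and to re-expand with (PA2) and (PA3); one lands on $(h_{(1)}\cdot E)(h_{(2)}k\cdot b')\,E$, which is exactly the right-hand side once the factors of $E$ there are cleaned up in the same way. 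For the symmetric case, (PA4) for $\rhd$ is obtained from the same intermediate expression $\big(h_{(1)}\cdot(k\cdot b')\big)(h_{(2)}\cdot E)\,E$: one now invokes (PA4) for the original action instead of (PA3), absorbs a factor $(h_{(2)}\cdot 1_B)(h_{(3)}\cdot E)=h_{(2)}\cdot E$ by (PA2), and reads off $(h_{(1)}k\rhd b')(h_{(2)}\rhd E)$.

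I expect the only genuine difficulty to be the Sweedler-index bookkeeping and the disciplined, repeated use of the two facts ``$\pi$ is an algebra homomorphism'' and ``$E$ is central in $B$'' to relocate and coalesce the factors of $E$; the single non-mechanical idea is the commutation $E(k\cdot b')=(k\cdot b')E$ carried out inside a single action $h\cdot(-)$, and once that is isolated the remaining manipulations are routine.
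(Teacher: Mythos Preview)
Your proposal is correct and follows essentially the same route as the paper's proof: both arguments hinge on writing $h\rhd(k\rhd b')=(h\cdot((k\cdot b')E))E$, using the centrality of $E$ to commute $(k\cdot b')E=E(k\cdot b')$ inside the action, and then expanding via (PA2) and (PA3) to reach $(h_{(1)}\cdot E)(h_{(2)}k\cdot b')E$. Your detour through the intermediate form $(h_{(1)}\cdot(k\cdot b'))(h_{(2)}\cdot E)E$ before ``reading it back'' is redundant for (PA3) but pays off for your treatment of (PA4), which the paper simply declares analogous.
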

\begin{proof}
The verification of (PA1) and (PA2) of Definition \ref{definicao.acao.parcial} is straightforward. For (PA3), 
given $h, k \in H$ and $b \in B'$, 
$$ \begin{array}{ccl}
    h \rhd (k \rhd b) & = & [h\cdot((k\cdot b)E)]E \\
                      & = & [h\cdot(E(k\cdot b))]E \\
                      & = & (h_1\cdot E)\bigl(h_2\cdot (k\cdot b)\bigr)E \\
                      & = & (h_1\cdot E)(h_2\cdot 1_B)(h_3k\cdot b)E \\
                      & = & (h_1\cdot E)E(h_2k\cdot b)E \\
                      & = & (h_1\rhd E)(h_2k \rhd b),
    \end{array}    $$
    and since $E$ is the unit of $B'$, we have that $h\rhd (k\rhd b) = (h_1\rhd 1_{B'})(h_2k\rhd b)$. An analogous computation shows that 
    if the partial action of $H$ on $B$ is symmetric, then the partial action of $H$ on $B'$ is also symmetric.
\end{proof}

Let us apply this result to the algebra $A= \apar = \langle \eps_h \mid h \in H \rangle \subseteq \Hpar$, which is a symmetric partial $H$-module algebra via 
$$ h\cdot a =[h_1]a[S(h_2)], \ \ \textrm{ for } h \in H \textrm{ and } a \in A. $$
We recall that, by Proposition \ref{proposicao.A.as.PxA.sommation}, we have a decomposition of $A$ as a direct sum of unital ideals
$$A = \bigoplus_{X\in \Px_1(G)} AP_X. $$
For each $X \in \Px_1(G)$, the element $P_X$ is a central idempotent of $A$; by Proposition \ref{prop:induced.partial.action}, the linear map 
$$ \begin{array}{ccl}
     H\x AP_X & \larr & AP_X, \\
     h\x aP_X & \lmap & \bigl(h\cdot(aP_X)\bigr)P_X,
\end{array} $$
is a symmetric partial action of $H$ on $AP_X$.

\begin{lema}\label{lema.varphiX.algebra.sobre}
    Let $H$ be a pointed Hopf algebra with invertible antipode and finite group $G$ of grouplike elements.  If $A = \apar  \subseteq \Hpar$, and $X\in \Px_1(G)$, then 
    $$ \varphi_{X}: A  \larr AP_X, \ \textrm{ defined by } \ \varphi_X(\eps_h) = (h\cdot P_X)P_X, $$
    is a surjective algebra map.
\end{lema}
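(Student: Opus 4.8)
The plan is to produce $\varphi_X$ from the universal property of $\apar$ and then to handle surjectivity by hand. For the algebra-map part, I would apply Proposition~\ref{prop:induced.partial.action} to $B=A=\apar$ and the central idempotent $E=P_X$: the ideal $AP_X$ becomes a symmetric partial left $H$-module algebra with unit $P_X$ and induced partial action $h\rhd(aP_X)=(h\cdot(aP_X))P_X$. Hence the associated map $H\to AP_X$, $h\mapsto h\rhd 1_{AP_X}=(h\cdot P_X)P_X$, satisfies the relations \eqref{eq:Arelation11}, \eqref{eq:Arelation22}, \eqref{eq:Arelation33}, since these hold for the map $h\mapsto h\cdot 1_B$ attached to \emph{any} partial $H$-module algebra $B$. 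By the universal property of $\apar$ (Theorem~\ref{teorema.A.propriedade.universal}) there is then a unique algebra morphism $\varphi_X\colon\apar\to AP_X$ with $\varphi_X(\eps_h)=(h\cdot P_X)P_X$, which is the map in the statement.

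It remains to show $\varphi_X$ is onto. Its image is a unital subalgebra of $AP_X$ containing every $(h\cdot P_X)P_X$, while $AP_X$, as an algebra with unit $P_X$, is generated by the elements $\eps_hP_X=(h\cdot1\um)P_X$ with $h\in H$. Since $1\um=\sum_{Y\in\Px_1(G)}P_Y$ by Proposition~\ref{proposicao.A.as.PxA.sommation}, we get $\eps_hP_X=\sum_{Y\in\Px_1(G)}(h\cdot P_Y)P_X$, so it suffices to place each summand in $\im\varphi_X$. The first key point is that $(h\cdot P_Y)P_X=0$ whenever $Y\not\sim X$ (with $\sim$ as in \eqref{equivalence.relation}): for a grouplike $h=g$ this is immediate from $g\cdot P_Y\in\{P_{gY},0\}$, from $P_{gY}P_X=\delta_{gY,X}P_X$, and from the fact that $gY=X$ forces $Y\sim X$; the general case follows by induction along the coradical filtration of $H$, using $P_Y=P_Y^2$, axiom (PA$2$), and the centrality of $P_X$. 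Once the summands with $Y\not\sim X$ are removed, one is left with $\eps_hP_X=\sum_{Y\sim X}(h\cdot P_Y)P_X$, and for $Y\sim X$ one writes $P_Y=g\cdot P_X$ for a grouplike $g$ with $g^{-1}\in X$ and $gX=Y$; axiom (PA$4$) then gives $(h\cdot P_Y)P_X=\varphi_X(\eps_{h_1g})\,(\eps_{h_2}P_X)$, and a second induction on the coradical-filtration degree of $h$ (isolating, by pointedness of $H$, the part of $\Delta h$ of the form $g_L\otimes h$) pins $\eps_hP_X$ inside $\im\varphi_X$.

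The verifications needed in the first paragraph — checking (PA$1$)--(PA$4$) for the induced action, and that $h\mapsto(h\cdot P_X)P_X$ meets the defining relations of $\apar$ — are routine given what is already available. The real work, and the step I expect to be the main obstacle, is the surjectivity bookkeeping in the second paragraph: controlling the ``off-diagonal'' contributions $(h\cdot P_Y)P_X$ and arranging the coradical-filtration inductions so that they close, since the naive recursion for $\eps_hP_X$ regenerates $\eps_hP_X$ itself and one must exploit the pointed structure of $H$ to absorb that term.
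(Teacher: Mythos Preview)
Your construction of $\varphi_X$ as an algebra map is correct and matches the paper's: both invoke the universal property of $\apar$ (Theorem~\ref{teorema.A.propriedade.universal}), and your use of Proposition~\ref{prop:induced.partial.action} to package the verification of \eqref{eq:Arelation11}--\eqref{eq:Arelation33} is a perfectly good shortcut.

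The surjectivity argument, however, has a real gap at exactly the point you flag. Your recursion
\[
\eps_h P_X \;=\; \sum_{Y\sim X}\varphi_X(\eps_{h_{(1)}g_Y})\,(\eps_{h_{(2)}}P_X)
\]
does regenerate top-degree terms: if you split $\Delta(h)$ along the coradical filtration as $H_0\otimes H + H\otimes H_{n-1}$, the $H_0\otimes H$ piece produces factors $\varphi_X(\eps_{k g_Y})\in\{0,P_X\}$ multiplied by $\eps_{a}P_X$ with $a$ still of degree $n$. Isolating ``the part of $\Delta h$ of the form $g_L\otimes h$'' only tells you that $\eps_hP_X$ is congruent, modulo $\im\varphi_X$ and lower terms, to a combination of other $\eps_{h'}P_X$ with $h'$ of the \emph{same} degree; nothing forces that linear relation to be solvable. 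As written, the induction does not close.

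The paper sidesteps this completely. The coradical-filtration work has already been done once, in Lemma~\ref{lema.hGammaA.h1GammaA} (equivalently Proposition~\ref{proposicao.Gamma.central.idempotent}), yielding $h\cdot\Gamma_X=\eps_h\Gamma_X$ for all $h\in H$. Using $P_X=\Gamma_XP_X$ one gets
\[
\eps_hP_X=(h\cdot\Gamma_X)P_X=\frac{1}{|G_X|}\sum_{g^{-1}\in X}\bigl(h\cdot(g\cdot P_X)\bigr)P_X.
\]
The remaining step is the elementary partial-action identity $h\cdot(g\cdot b)=hg\cdot b$ whenever $g$ is grouplike and $b=g^{-1}\cdot c$ for some $c$ (a two-line check from (PA3) and \eqref{eq:Arelation33}); applying it with $b=g^{-1}\cdot(g\cdot P_X)=P_X$ gives $h\cdot(g\cdot P_X)=hg\cdot P_X$, hence
\[
\eps_hP_X=\frac{1}{|G_X|}\sum_{g^{-1}\in X}\varphi_X(\eps_{hg})\in\im\varphi_X.
\]
So the right move is not a second induction but to cash in the identity $h\cdot\Gamma_X=\eps_h\Gamma_X$ you already have available; your first ``induction'' (vanishing of $(h\cdot P_Y)P_X$ for $Y\not\sim X$) is then unnecessary as well, since that vanishing is already contained in Theorem~\ref{teorema.A.GammaA.direct.somma}.
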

\begin{proof}
The map $\varphi_X$ will be constructed via Theorem \ref{teorema.A.propriedade.universal}. Consider the map
    $$ \begin{array}{cccl}
       f: & H & \larr & AP_X \\
          & h & \lmap & (h\cdot P_X)P_X.
    \end{array} $$
    It is clear that
    $$ f(1_H) = P_X = 1_{AP_X}, \ \ \ \ f(h) = (h\cdot P_X)P_X = (h_1\cdot P_X)(h_2\cdot P_X)P_X = f(h_1)f(h_2), $$
    and it is not hard to see that the equality $$f(h_1k)f(h_2) =  f(h_1)f(h_2k)$$ holds for all $h,k \in H$:
    $$ \begin{array}{ccl}
        f(h_1k)f(h_2) & = & (h_1k\cdot P_X)(h_2\cdot P_X)P_X  \\
         & = & (h_1k\cdot P_X)(h_2\cdot 1_A)(h_3\cdot P_X)P_X  \\
         & = & \bigl(h_1\cdot (k\cdot P_X)\bigr)(h_2\cdot P_X)P_X  \\
         & = & \bigl[h\cdot ((k\cdot P_X)P_X)\bigr]P_X  \\
         & = & \bigl[h\cdot (P_X(k\cdot P_X))\bigr]P_X  \\
         & = & (h_1\cdot P_X)\bigl(h_2\cdot (k\cdot P_X)\bigr)P_X  \\
         & = & (h_1\cdot P_X)(h_2\cdot 1_A)(h_3k\cdot P_X)P_X  \\
         & = & (h_1\cdot P_X)(h_2k\cdot P_X)P_X  \\
         & = & f(h_1)f(h_2k).
    \end{array} $$
    If follows from Theorem \ref{teorema.A.propriedade.universal}  that there is an algebra map $\varphi_X:A \to AP_X$, given by $\varphi_X(\eps_h) = f(h) = (h\cdot P_X)P_X$, for all $h \in H$. 

    We now prove that this is a surjective map. It is enough to show that the algebra generators $\eps_hP_X$ lie in the image of $\varphi_X$. By Proposition \ref{proposicao.Gamma.central.idempotent} the central idempotent $$\Gamma_X = \displaystyle \dfrac{1}{|G_X|}\sum_{g^{-1}\in X}(g\cdot P_{X})$$ satisfies the equality $h\cdot \Gamma_X = \eps_h\Gamma_X$ for each $h \in H$; hence
    $$ \begin{array}{ccl}
        \eps_hP_X & = & \eps_h(\Gamma_XP_X), \ \ \textrm{ since } P_X = \Gamma_XP_X, \\
         & = & (\eps_h\Gamma_X)P_X  \\
         & = & (h\cdot \Gamma_X)P_X  \\
         & = & \displaystyle \dfrac{1}{|G_X|}\sum_{g^{-1}\in X}\bigl(h\cdot (g\cdot P_{X})\bigr)P_X, \ \ \textrm{since } g^{-1}\cdot (g\cdot P_X) = P_X, \textrm{ for } g^{-1}\in X, \\
         & = & \displaystyle \dfrac{1}{|G_X|}\sum_{g^{-1}\in X}\bigl(h\cdot (g\cdot g^{-1} \cdot (g \cdot P_{X}))\bigr)P_X  \\
         & = & \displaystyle \dfrac{1}{|G_X|}\sum_{g^{-1}\in X}\bigl(hg\cdot g^{-1} \cdot (g \cdot P_{X}))\bigr)P_X  \\
         & = & \displaystyle \dfrac{1}{|G_X|}\sum_{g^{-1}\in X}\bigl(hg\cdot P_{X}\bigr)P_X  \\
         & = & \displaystyle \dfrac{1}{|G_X|}\sum_{g^{-1}\in X}f(hg)  \\
         & = & \displaystyle \dfrac{1}{|G_X|}\sum_{g^{-1}\in X}\varphi_X(\eps_{hg}), 
    \end{array} $$
    which concludes the proof.
\end{proof}

    Now, by Proposition \ref{proposicao.A.as.PxA.sommation}, we have that $\displaystyle A = \bigoplus_{Y\in \Px_1(G)} AP_Y $; given $g \in G$, we have that
    $$ \varphi_X(\eps_g) = (g\cdot P_X)P_X = \left\{\begin{array}{cl}
        P_X, & \textrm{if } g\in G_X,  \\
        0, & \textrm{otherwise.}
    \end{array}\right. $$
    
    Since 
    $$ P_Y = \prod_{y\in Y}\eps_y\prod_{z\in G\smallsetminus Y}(1_A-\eps_z), $$
    it follows that 
    $$ \varphi_X(P_Y) = \prod_{y\in Y}\varphi_X(\eps_y)\prod_{z\in G\smallsetminus Y}(P_X-\varphi_X(\eps_z)). $$

    If $Y \not\subseteq G_X$, let $y_0\in Y \setminus G_X$, then $\varphi_X(\eps_{_0}y) = 0$ and $\varphi_X(P_Y)=0$. Analogously, if $G_X \not \subseteq Y$, let $z_0 \in G_X \setminus Y$; then $(P_X - \varphi_X (\eps_{z_0}) = 0$ and $\varphi_X(P_Y)=0$. Finally, when $Y = G_X$,  $\varphi_X(\eps_y) = P_X$  for each  $y\in Y$ and $\varphi_X(\eps_z) = 0$ for each $z\notin Y$, therefore 
    $$ \varphi_X(P_Y) = \prod_{y\in Y}\varphi_X(\eps_y)\prod_{z\in G\smallsetminus Y}(P_X-\varphi_X(\eps_z)) = \prod_{y\in Y}P_X\prod_{z\in G\smallsetminus Y}(P_X-0) = P_X. $$

    We conclude that  
    $$ \varphi_X(P_Y) =  \left\{\begin{array}{cl}
        P_X, & \textrm{if } Y = G_X,  \\
        0, & \textrm{otherwise.}
    \end{array}\right. $$
    
    Then, we have that 
    $$ \bigoplus_{{Y\in \Px_1(G),}\atop {Y \neq G_X}}AP_Y \subseteq \ker\varphi_X, $$
    and we may consider the algebra map $\varphi_X|_{AP_{G_X}}:AP_{G_X} \to AP_X$, which remains surjective. 

    Now in order to prove that $\varphi_X|_{AP_{G_X}}:AP_{G_X} \to AP_X$ is an isomorphism, we are going to build an inverse to it.

    \begin{lema}\label{lema.psiX.algebra.ker}
Let $H$ be a pointed Hopf algebra with invertible antipode and finite group $G$ of grouplike elements.  If $A = \langle \eps_h \mid h \in H\rangle \subseteq \Hpar$, and $X\in \Px_1(G)$, then 
    $$ \psi_{X}: A  \larr AP_{G_X}, \ \textrm{ defined by } \ \psi_X(\eps_h) = \dfrac{1}{|G_X|}\sum_{g^{-1}\in X}\eps_{hg}P_{G_X}, $$
    is an algebra map, where $G_X=\{t\in G \mid tX=X\}$. Also $AP_{Y} \subseteq \ker\psi_X$, for all $Y \neq X$.
    \end{lema}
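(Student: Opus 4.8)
The plan is to construct $\psi_X$ through the universal property of $\apar$ (Theorem~\ref{teorema.A.propriedade.universal}), exactly as $\varphi_X$ was built in Lemma~\ref{lema.varphiX.algebra.sobre}. I would introduce the linear map $f\colon H\to AP_{G_X}$, $f(h)=\frac1{|G_X|}\sum_{g^{-1}\in X}\eps_{hg}P_{G_X}$, and show it satisfies \eqref{eq:Arelation11}, \eqref{eq:Arelation22} and \eqref{eq:Arelation33}; Theorem~\ref{teorema.A.propriedade.universal} then hands back the algebra morphism $\psi_X$ with $\psi_X(\eps_h)=f(h)$. Two facts make the computations manageable. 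Since $G_X$ is a subgroup of $G$, Lemma~\ref{lema.hGammaG.epshGammaG}(1) gives $P_{G_X}=\Gamma^A_{G_X}$, so $P_{G_X}$ is a \emph{central} idempotent of $A$ satisfying $k\cdot P_{G_X}=\eps_k P_{G_X}$ for all $k\in H$ by Lemma~\ref{lema.hGammaA.h1GammaA}; in particular $\eps_{hg}P_{G_X}=(hg)\cdot P_{G_X}$. And reading off the product defining $P_{G_X}$, together with idempotency of its factors, yields $\eps_g P_{G_X}=P_{G_X}$ for $g\in G_X$ and $\eps_g P_{G_X}=0$ for $g\in G\setminus G_X$.

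Relations \eqref{eq:Arelation11} and \eqref{eq:Arelation33} are then short. For \eqref{eq:Arelation11}: the only nonzero summands of $f(1_H)$ are those with $g\in G_X$, and each such $g$ automatically has $g^{-1}\in X$, so $f(1_H)=\frac1{|G_X|}|G_X|P_{G_X}=P_{G_X}=1_{AP_{G_X}}$. For \eqref{eq:Arelation33}: the identity $\eps_{h_1 g}\eps_{h_2 kt}=\eps_{h_1 kt}\eps_{h_2 g}$ (an instance of \eqref{eq:Arelation3}, with $h$ replaced by $hg$ and $k$ by $g^{-1}kt$) gives
$$f(h_1)f(h_2 k)=\frac1{|G_X|^2}\sum_{g^{-1}\in X}\sum_{t^{-1}\in X}\eps_{h_1 g}\eps_{h_2 kt}P_{G_X}=\frac1{|G_X|^2}\sum_{g,t}\eps_{h_1 kt}\eps_{h_2 g}P_{G_X},$$
and since $g$ and $t$ run over the same subset of $G$, relabelling $g\leftrightarrow t$ turns this into $f(h_1 k)f(h_2)$.

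The substantial point is idempotency, relation \eqref{eq:Arelation22}. Expanding and using centrality of $P_{G_X}$, $f(h_1)f(h_2)=\frac1{|G_X|^2}\sum_{g,t}\eps_{h_1 g}\eps_{h_2 t}P_{G_X}$, so what is needed is the product identity $\eps_{h_1 g}\eps_{h_2 t}P_{G_X}=\eps_{hg}P_{G_X}$ when $g^{-1}t\in G_X$ and $\eps_{h_1 g}\eps_{h_2 t}P_{G_X}=0$ otherwise; granting it, for each $g$ with $g^{-1}\in X$ the contributing $t$'s are precisely the $|G_X|$ elements of $gG_X$ (which all satisfy $t^{-1}\in X$), each yielding $\eps_{hg}P_{G_X}$, and hence $f(h_1)f(h_2)=\frac1{|G_X|}\sum_{g^{-1}\in X}\eps_{hg}P_{G_X}=f(h)$. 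I expect this product identity to be the main obstacle. My route: first prove the auxiliary equality $\eps_{ku}P_{G_X}=\eps_k P_{G_X}$ for every $k\in H$ and $u\in G_X$. Both $k\mapsto\eps_{ku}P_{G_X}$ and $k\mapsto\eps_k P_{G_X}$ are convolution idempotents $H\to AP_{G_X}$ (for the former use \eqref{eq:Arelation2} on $ku$), they agree on the coradical $H_0=\K G$ (there they reduce to $\eps_{vu}P_{G_X}=\eps_v P_{G_X}$ because $vu\in G_X\Leftrightarrow v\in G_X$), and the $*$-relation required in Theorem~\ref{teorema.idempotent.convolution.coradical} holds since $\eps_{ku}P_{G_X}=\eps_{k_1 u}\eps_{k_2}P_{G_X}$ — which one gets from $(ku)\cdot P_{G_X}=(ku)\cdot(u^{-1}\cdot P_{G_X})$, using $u^{-1}\in G_X$ and (PA$3$) — so Theorem~\ref{teorema.idempotent.convolution.coradical} forces the two maps to coincide. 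Granting this, the case $g^{-1}t=u\in G_X$ follows from $\eps_{h_1 g}\eps_{h_2 gu}P_{G_X}=\eps_{h_1 g}\eps_{h_2 g}P_{G_X}=\eps_{hg}P_{G_X}$ (the last equality by \eqref{eq:Arelation2}), while the case $g^{-1}t\notin G_X$ comes down to showing the central idempotent $\eps_{g^{-1}t}$ can be inserted into $\eps_{h_1 g}\eps_{h_2 t}P_{G_X}$ without change, after which that term is killed by $\eps_{g^{-1}t}P_{G_X}=0$. With \eqref{eq:Arelation11}--\eqref{eq:Arelation33} in place, $\psi_X$ is produced by Theorem~\ref{teorema.A.propriedade.universal}.

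For the final assertion I would compute $\psi_X(\eps_g)=f(g)$, $g\in G$: since $\eps_{gs}P_{G_X}$ is $P_{G_X}$ when $gs\in G_X$ and $0$ otherwise, and (writing $s=g^{-1}u$ with $u\in G_X$) the condition $s^{-1}\in X$ is equivalent to $g\in X$, we get $\psi_X(\eps_g)=P_{G_X}$ if $g\in X$ and $\psi_X(\eps_g)=0$ if $g\notin X$. As $\psi_X$ is an algebra map and $P_Y=\prod_{y\in Y}\eps_y\prod_{z\in G\setminus Y}(1_A-\eps_z)$, it follows that $\psi_X(P_Y)=\prod_{y\in Y}\psi_X(\eps_y)\prod_{z\notin Y}(P_{G_X}-\psi_X(\eps_z))$; if $Y\neq X$ there is either a $y_0\in Y\setminus X$, producing the zero factor $\psi_X(\eps_{y_0})=0$, or a $z_0\in X\setminus Y$, producing the zero factor $P_{G_X}-\psi_X(\eps_{z_0})=P_{G_X}-P_{G_X}=0$; in either case $\psi_X(P_Y)=0$, so $\psi_X(AP_Y)=\psi_X(A)\psi_X(P_Y)=0$, i.e.\ $AP_Y\subseteq\ker\psi_X$. (For $Y=X$ the same computation gives $\psi_X(P_X)=P_{G_X}=1_{AP_{G_X}}$, which is precisely what makes $\psi_X$ descend to an inverse of $\varphi_X|_{AP_{G_X}}$ afterwards.)
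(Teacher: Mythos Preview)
Your overall strategy is exactly the paper's: build $\psi_X$ through the universal property of $A_{par}$ by checking \eqref{eq:Arelation11}--\eqref{eq:Arelation33} for $f$, then read off the kernel statement from the values $\psi_X(\eps_g)$, $g\in G$. Your treatment of \eqref{eq:Arelation11}, \eqref{eq:Arelation33} and of the kernel is correct and matches the paper. (Incidentally, your auxiliary $\eps_{ku}P_{G_X}=\eps_kP_{G_X}$ for $u\in G_X$ follows without Theorem~\ref{teorema.idempotent.convolution.coradical}: from $u\cdot P_{G_X}=P_{G_X}=u^{-1}\cdot P_{G_X}$ one gets $(ku)\cdot P_{G_X}=\eps_{k_1u}\eps_{k_2}P_{G_X}$ and $k\cdot P_{G_X}=\eps_{k_1}\eps_{k_2u}P_{G_X}$, which agree by \eqref{eq:Arelation3}.)

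The gap is in \eqref{eq:Arelation22}, specifically the case $g^{-1}t\notin G_X$. Your sentence ``comes down to showing the central idempotent $\eps_{g^{-1}t}$ can be inserted into $\eps_{h_1g}\eps_{h_2t}P_{G_X}$ without change'' is not justified, and I do not see a direct way to make that insertion (try $h=1$, $t=1$, $g^{-1}\in X\setminus G_X$: the ``inserted'' expression is zero while the uninserted one is $\eps_{g}P_{G_X}$, which does happen to vanish---but for a different reason than the one you give). The paper sidesteps the case split entirely by staying with the partial action: it rewrites $f(h)=\frac{1}{|G_X|}\sum_{g^{-1}\in X} hg\cdot P_{G_X}$ (your own observation $\eps_{hg}P_{G_X}=(hg)\cdot P_{G_X}$), and then for each pair $g,t$ computes, via (PA2) and (PA3),
\[
(h_1g\cdot P_{G_X})(h_2t\cdot P_{G_X})
=(h_1g\cdot P_{G_X})\bigl(h_2g\cdot(g^{-1}t\cdot P_{G_X})\bigr)
=hg\cdot\bigl(P_{G_X}\,(g^{-1}t\cdot P_{G_X})\bigr)
=hg\cdot\bigl(\eps_{g^{-1}t}P_{G_X}\bigr).
\]
Since $g^{-1}t\in G$ is grouplike, $\eps_{g^{-1}t}P_{G_X}$ is $P_{G_X}$ or $0$ according as $g^{-1}t\in G_X$ or not, and the idempotency drops out immediately. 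I would replace your case analysis by this uniform computation.
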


\begin{proof}
   Consider the map
    $$ \begin{array}{cccl}
       f': & H & \larr & AP_{G_X}, \\
          & h & \lmap & \displaystyle \dfrac{1}{|G_X|}\sum_{g^{-1}\in X}\eps_{hg}P_{G_X}.
    \end{array} $$
    Since $G_X = \{t\in G \mid tX=X\} \subseteq X$ is a subgroup of $G$, we have that $P_{G_X} = \Gamma_{G_X}$ and it follows from Lemma \ref{lema.hGammaA.h1GammaA} that
    $$ h\cdot P_{G_X} = \eps_hP_{G_X}, \ \forall h \in H; $$
    hence $f'$ can be rewritten as 
    $$ f'(h) = \dfrac{1}{|G_X|}\sum_{g^{-1}\in X}\eps_{hg}P_{G_X} = \dfrac{1}{|G_X|}\sum_{g^{-1}\in X}hg\cdot P_{G_X}. $$
    We will use use Theorem \ref{teorema.A.propriedade.universal} to prove now that $f':H \to AP_{G_X}$ can be extended to a map $\psi_X:A\to AP_{G_X}$, defined by $\psi_X(\eps_h) = f'(h)$, for all $h \in H$. We procceed to proving that equalities  \eqref{eq:Arelation11}, \eqref{eq:Arelation22} and \eqref{eq:Arelation33} hold for $f'$. 
    
    First, notice that 
    $$ f'(1_H) = \dfrac{1}{|G_X|}\sum_{g^{-1}\in X}\eps_{g}P_{G_X} = \dfrac{1}{|G_X|}\sum_{g^{-1}\in G_X} P_{G_X} = P_{G_X} = 1_{AP_{G_X}}. $$
    
    Furthermore, $f'$ is a convolution idempotent in $\Hom(H,AP_{G_X})$: given $h \in H$,
    $$ \begin{array}{ccl}
        f'(h_1)f'(h_2) & = & \displaystyle \dfrac{1}{|G_X|^2}\sum_{{g^{-1}\in X,}\atop{k^{-1}\in X}}(h_1g\cdot P_{G_X})(h_2k\cdot P_{G_X})  \\
        & = & \displaystyle \dfrac{1}{|G_X|^2}\sum_{{g^{-1}\in X,}\atop{k^{-1}\in X}}(h_1g\cdot P_{G_X})(h_2g\cdot 1_H)(h_3(gg^{-1})k\cdot P_{G_X})  \\
        & = & \displaystyle \dfrac{1}{|G_X|^2}\sum_{{g^{-1}\in X,}\atop{k^{-1}\in X}}(h_1g\cdot P_{G_X})\bigl(h_2g\cdot(g^{-1}k\cdot P_{G_X})\bigr)  \\
        & = & \displaystyle \dfrac{1}{|G_X|^2}\sum_{{g^{-1}\in X,}\atop{k^{-1}\in X}} hg\cdot \bigl(P_{G_X}(g^{-1}k\cdot P_{G_X})\bigr)  \\
        & = & \displaystyle \dfrac{1}{|G_X|^2}\sum_{{g^{-1}\in X,}\atop{k^{-1}\in X}} hg\cdot \bigl(\eps_{g^{-1}k}P_{G_X}\bigr)  \\
        & = & \displaystyle \dfrac{1}{|G_X|^2}\sum_{g^{-1}\in X}\Bigl(\sum_{k^{-1}\in X} hg\cdot \bigl(\eps_{g^{-1}k}P_{G_X}\bigr)\Bigr), \ \ x=g^{-1}k, \\
        & = & \displaystyle \dfrac{1}{|G_X|^2}\sum_{g^{-1}\in X}\Bigl(\sum_{x^{-1}g^{-1}\in X} hg\cdot \bigl(\eps_{x}P_{G_X}\bigr)\Bigr). \\
        \end{array} $$
        
        Notice that $\eps_{x}P_{G_X} = 0$ if $x\notin G_X$, so the nonzero terms in the last sum satisfy $x\in G_X$ and, in this case,  $x^{-1}g^{-1} \in x^{-1}X = X$; therefore,  

         $$ \begin{array}{ccl}
        f'(h_1)f'(h_2) & = & \displaystyle \dfrac{1}{|G_X|^2}\sum_{g^{-1}\in X}\Bigl(\sum_{x^{-1}g^{-1}\in X} hg\cdot \bigl(\eps_{x}P_{G_X}\bigr)\Bigr)  \\
        & = & \displaystyle \dfrac{1}{|G_X|^2}\sum_{g^{-1}\in X}\Bigl(\sum_{x\in G_X} hg\cdot \bigl(\eps_{x}P_{G_X}\bigr)\Bigr)  \\
        & = & \displaystyle \dfrac{|G_X|}{|G_X|^2}\sum_{g^{-1}\in X} hg\cdot \bigl(P_{G_X}\bigr), \textrm{ since } \eps_xP_{G_X} = P_{G_X},  \\
        & = & \displaystyle \dfrac{1}{|G_X|}\sum_{g^{-1}\in X} hg\cdot \bigl(P_{G_X}\bigr) \ = \ f'(h).
    \end{array} $$
    
    Finally, consider $h,t \in H$; we have that
    $$ \begin{array}{ccl}
        f'(h_1t)f'(h_2) & = & \displaystyle \dfrac{1}{|G_X|^2}\sum_{{g^{-1}\in X,}\atop{k^{-1}\in X}}\eps_{h_1tg}\eps_{h_2k}P_X  \\
        & = & \displaystyle \dfrac{1}{|G_X|^2}\sum_{{g^{-1}\in X,}\atop{k^{-1}\in X}}\eps_{h_1k(k^{-1}tg)}\eps_{h_2k}P_X  \\
        & = & \displaystyle \dfrac{1}{|G_X|^2}\sum_{{g^{-1}\in X,}\atop{k^{-1}\in X}}\eps_{h_1k}\eps_{h_2k(k^{-1}tg)}P_X  \\
        & = & \displaystyle \dfrac{1}{|G_X|^2}\sum_{{g^{-1}\in X,}\atop{k^{-1}\in X}}\eps_{h_1k}\eps_{h_2tg}P_X  \\
        & = & f'(h_1)f'(h_2t).
    \end{array} $$
    By Theorem \ref{teorema.A.propriedade.universal}, there is a unital algebra map 
    $$ \psi_{X}:A \to AP_X, \ \ \textrm{defined by } \psi_X(\eps_h) = \dfrac{1}{|G_X|}\sum_{g^{-1}\in X}\eps_{hg}P_{G_X}. $$

To conclude the proof of this lemma, let us examine the kernel of the map $\psi_X:A \to AP_X$. Let us study the image of $\eps_k$ for $k \in G$, which is given by  
    $$ \psi_X(\eps_k) = \dfrac{1}{|G_X|}\sum_{g^{-1}\in X}\eps_{kg}P_{G_X}. $$
    By definition,  $\eps_{kg}P_{G_X} \neq 0$ if and only if $kg \in G_X$, and if $kg \in G_X$ then  $k=(kg)g^{-1} \in kgX = X$; hence, if $k \notin X$ then $\eps_{kg}P_X=0$ for all $g^{-1}\in X$ and 
    $$ \psi_X(\eps_k) = \dfrac{1}{|G_X|}\sum_{g^{-1}\in X}\eps_{kg}P_X = 0. $$
    Assume now that $k \in X$. For all $x\in G_X$ we have that $g^{-1}=xk \in X$, and then $kg = k(k^{-1}x^{-1}) = x^{-1} \in G_X$. Since $G_X$ is a subgroup of $G$, then 
    $$ \psi_X(k) = \dfrac{1}{|G_X|}\sum_{g^{-1}\in X}\eps_{kg}P_{G_X} = \dfrac{1}{|G_X|}\sum_{x\in G_X}\eps_{x}P_{G_X} = \dfrac{1}{|G_X|}\sum_{x\in G_X}P_{G_X} = P_{G_X} = 1_{AP_{G_X}}. $$
    Finally, we have that 
    $$ \psi_X(P_Y) = \left\{\begin{array}{cl}
        P_{G_X}, & \textrm{if } Y=X  \\
        0, & \textrm{otherwise}.
    \end{array}\right. $$
    Which give us that 
    $$ \bigoplus_{{Y \in \Px_1(G)}\atop{Y \neq X}}AP_{Y} \subseteq \ker\psi_X. $$    
\end{proof}

The following facts are going to finally establish that there is an isomorphism between $AP_X$ and $AP_{G_X}$.
    \begin{teorema}\label{teorema.APX.simeq.APXG}
Let $H$ be a pointed Hopf algebra with invertible antipode and finite group $G$ of grouplike elements.  If $A = \langle \eps_h \mid h \in H\rangle \subseteq \Hpar$, $X\in \Px_1(G)$, and $G_X=\{t\in G \mid tX=X\}$, then the surjective algebra map 
        $$ \varphi_{X}: A  \larr AP_X, \ \textrm{ defined by } \ \varphi_X(\eps_h) = (h\cdot P_X)P_X $$
        induces an isomorphism $\varphi_{X}\mid_{AP_{G_X}}:AP_{G_X} \to AP_X$.
    \end{teorema}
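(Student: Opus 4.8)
The plan is to show that the surjective algebra map $\varphi_X|_{AP_{G_X}}\colon AP_{G_X}\to AP_X$ and the map $\psi_X|_{AP_X}\colon AP_X\to AP_{G_X}$ produced in Lemma~\ref{lema.psiX.algebra.ker} are mutually inverse. First I would record that these restrictions are well defined with the stated codomains: by Lemma~\ref{lema.varphiX.algebra.sobre} together with the identities $\varphi_X(P_Y)=P_X$ for $Y=G_X$ and $\varphi_X(P_Y)=0$ otherwise (computed just above that lemma), $\varphi_X$ restricts to a \emph{surjective} algebra map $\bar\varphi_X\colon AP_{G_X}\to AP_X$; dually, by Lemma~\ref{lema.psiX.algebra.ker} and the identities $\psi_X(P_Y)=P_{G_X}$ for $Y=X$ and $\psi_X(P_Y)=0$ otherwise, $\psi_X$ restricts to an algebra map $\bar\psi_X\colon AP_X\to AP_{G_X}$. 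Since all maps in sight are morphisms of algebras and the algebras $AP_X$, $AP_{G_X}$ are generated by the elements $\eps_h P_X$, respectively $\eps_h P_{G_X}$ ($h\in H$), it suffices to check that each of the two composites fixes these generators.

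The direction $\bar\varphi_X\circ\bar\psi_X=\id_{AP_X}$ is the easy one. On a generator $\eps_h P_X$ one has $\psi_X(\eps_h P_X)=\psi_X(\eps_h)\psi_X(P_X)=\frac{1}{|G_X|}\sum_{g^{-1}\in X}\eps_{hg}P_{G_X}$, and applying $\varphi_X$, using $\varphi_X(\eps_{hg})=(hg\cdot P_X)P_X$ and $\varphi_X(P_{G_X})=P_X$, gives $\frac{1}{|G_X|}\sum_{g^{-1}\in X}(hg\cdot P_X)P_X$, which is exactly $\eps_h P_X$ by the averaging identity $\eps_h P_X=\frac{1}{|G_X|}\sum_{g^{-1}\in X}(hg\cdot P_X)P_X$ established inside the proof of Lemma~\ref{lema.varphiX.algebra.sobre}. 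Hence $\bar\varphi_X\circ\bar\psi_X$ is the identity.

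For the reverse composite $\bar\psi_X\circ\bar\varphi_X=\id_{AP_{G_X}}$, note that on a generator $\eps_h P_{G_X}$ one has $\psi_X(\varphi_X(\eps_h P_{G_X}))=\psi_X(\varphi_X(\eps_h))$, because $\varphi_X(\eps_h)\in AP_X$ and $\varphi_X(P_{G_X})=P_X$; so the whole statement reduces to the single pointwise identity $\psi_X(\varphi_X(\eps_h))=\eps_h P_{G_X}$ for all $h\in H$. To prove this I would compare the two convolution-idempotent maps $u,v\colon H\to AP_{G_X}$ given by $u(h)=\psi_X\bigl((h\cdot P_X)P_X\bigr)$ and $v(h)=\eps_h P_{G_X}=h\cdot P_{G_X}$ (the last equality because $G_X$ is a subgroup, so $P_{G_X}=\Gamma_{G_X}$ and Lemma~\ref{lema.hGammaA.h1GammaA} applies); both are convolution idempotents, being algebra maps post-composed with the convolution idempotent $e_A(h)=\eps_h$. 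They agree on the coradical $H_0=\K G$: for $k\in G$ one computes $(k\cdot P_X)P_X=P_X$ if $k\in G_X$ and $0$ otherwise, hence $u(k)=P_{G_X}$ if $k\in G_X$ and $0$ otherwise, which is $v(k)=\eps_k P_{G_X}$. By Lemma~\ref{lema.partialaction.idempotent.fE} applied to the central idempotent $E=P_{G_X}$ and $f=u$ (equivalently, by Theorem~\ref{teorema.idempotent.convolution.coradical}), it then remains only to verify the ``ideal relations'' $u(h_1)(h_2\cdot P_{G_X})=(h_1\cdot P_{G_X})u(h_2)=h\cdot P_{G_X}$ for all $h\in H$; granting them, the lemma yields $u(h)P_{G_X}=h\cdot P_{G_X}$, and since $u(h)\in AP_{G_X}$ this reads $u(h)=h\cdot P_{G_X}=v(h)$, so $\psi_X(\varphi_X(\eps_h))=\eps_h P_{G_X}$ and $\bar\psi_X,\bar\varphi_X$ are mutually inverse, proving the theorem. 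The main obstacle is precisely this last verification: unwinding $u(h_1)(h_2\cdot P_{G_X})$ forces one to compute $\psi_X(h\cdot P_X)$, which requires expanding $h\cdot P_X$ via the partial-action formula $h\cdot(\eps_{k^1}\cdots\eps_{k^n})=\eps_{h_1k^1}\cdots\eps_{h_nk^n}\eps_{h_{n+1}}$ and again invoking the averaging identity $\eps_h P_X=\frac{1}{|G_X|}\sum_{g^{-1}\in X}(hg\cdot P_X)P_X$; this bookkeeping is the only genuinely computational point, and one could alternatively bypass the convolution-idempotent machinery altogether and verify $\psi_X(\varphi_X(\eps_h))=\eps_h P_{G_X}$ by a direct Sweedler-notation calculation, at the cost of a longer argument.
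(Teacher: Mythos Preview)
Your first composite $\bar\varphi_X\circ\bar\psi_X=\id_{AP_X}$ is exactly the computation the paper carries out, down to the use of the averaging identity $\eps_hP_X=\frac{1}{|G_X|}\sum_{g^{-1}\in X}(hg\cdot P_X)P_X$ from the proof of Lemma~\ref{lema.varphiX.algebra.sobre}. In fact the paper stops there: it computes only this composite and then declares $\psi_X|_{AP_X}$ a ``left inverse'' of the already surjective $\varphi_X|_{AP_{G_X}}$, concluding at once. Read literally, what has been exhibited is a \emph{section} (a right inverse), so your instinct that the other composite is also wanted is well placed; the paper itself does not supply it.

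Your outline for $\bar\psi_X\circ\bar\varphi_X=\id_{AP_{G_X}}$, however, has a genuine gap. Applying Lemma~\ref{lema.partialaction.idempotent.fE} with $f=u$ and $E=P_{G_X}$ requires the relations $u(h_1)(h_2\cdot P_{G_X})=(h_1\cdot P_{G_X})u(h_2)=h\cdot P_{G_X}$, and these are not free consequences of what has been set up: unpacking $u(h_1)\,\eps_{h_2}P_{G_X}$ asks you to control products of the form $\psi_X\varphi_X(\eps_{h_1})\cdot\eps_{h_2}$ inside $AP_{G_X}$, which is essentially the statement $\psi_X\varphi_X(\eps_h)=\eps_hP_{G_X}$ you are trying to prove. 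The alternative ``direct Sweedler calculation'' you gesture at is likewise not routine, since $h\cdot P_X$ is a $|G|$-fold product and its image under $\psi_X$ does not simplify term by term. So as written the second direction remains a plan rather than a proof; to match the paper you only need the first direction, but if you want an argument that stands on its own you must either complete that verification or find another reason why $\varphi_X|_{AP_{G_X}}$ is injective.
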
    
    \begin{proof}
        In order to prove that 
        $$ \varphi_{X}\mid_{AP_{G_X}}:AP_{G_X} \to AP_X,$$
        given by 
        $$\varphi_{X}(\eps_hP_{G_X}) = (h\cdot P_X)P_X\varphi_{X}(P_{G_X}) = (h\cdot P_X)P_XP_X = (h\cdot P_X)P_X, $$
        for all $h \in H$, is an isomorphism, we just have to show that it has a left inverse, since it is already surjective. Applying Lemma \ref{lema.psiX.algebra.ker} to $G_X$ in place of $X$, we have the following algebra map 
        $$ \psi_{X}: A  \larr AP_{G_X}, \ \  \ \psi_X(\eps_h) = \dfrac{1}{|G_X|}\sum_{g^{-1}\in X}\eps_{hg}P_{G_X}, $$
        which also satisfies $ \displaystyle \bigoplus_{{Y \in \Px_1(G)}\atop{Y \not\subseteq X}}AP_{Y} \subseteq \ker\psi_X$. Hence, we may consider the restriction  $\psi_{X}|_{AP_X}: AP_X  \to AP_{G_X}$. Notice that 
        $$ \begin{array}{ccl}
            (\varphi_X\circ\psi_X)(\eps_hP_X) & = & \displaystyle \dfrac{1}{|G_X|}\sum_{g^{-1}\in X}\varphi_X(\eps_{hg}P_{G_X})  \\
             & = & \displaystyle \dfrac{1}{|G_X|}\sum_{g^{-1}\in X}(\eps_{hg}\cdot P_{X})P_X, \ \textrm{ for } g^{-1}\in X, \  g^{-1}\cdot (g\cdot P_X) = P_X, \\
             & = & \displaystyle \dfrac{1}{|G_X|}\sum_{g^{-1}\in X}(hg\cdot g^{-1} \cdot g \cdot P_{X})P_X  \\
             & = & \displaystyle \dfrac{1}{|G_X|}\sum_{g^{-1}\in X}(h \cdot g\cdot g^{-1} \cdot g \cdot P_{X})P_X  \\
             & = & \displaystyle \dfrac{1}{|G_X|}\sum_{g^{-1}\in X}\bigl(h \cdot (g \cdot P_{X})\bigr)P_X  \\
             & = & \displaystyle \left[h \cdot \left(\dfrac{1}{|G_X|}\sum_{g^{-1}\in X}(g \cdot P_{X})\right)\right]P_X, \ \ \Gamma_X = \frac{1}{|G_X|}\sum_{g^{-1}\in X}(g \cdot P_{X}), \\
             & = & (h\cdot \Gamma_X)P_X  \\
             & = & \eps_h\Gamma_XP_X \ = \ \eps_hP_X.
        \end{array} $$
        We have just proved that the map $\psi_{X}|_{AP_X}: AP_X \to AP_{G_X}$ is a left inverse for $\varphi_{X}|_{AP_{G_X}}: AP_{G_X}  \to AP_{X}$, which is surjective, hence the latter is an isomorphism. 
    \end{proof}

    \begin{corolario} \label{cor:AP_X=AP_{gX}}
Let $H$ be a pointed Hopf algebra with invertible antipode and finite group $G$ of grouplike elements. Given $X \in \mathcal{P}_1(G)$ and $g \in G$ such that $g^{-1} \in X$, 
\begin{enumerate}
    \item $A P_{G_X} \simeq A P_{g G_X g^{-1}} = A P_{G_{gX}}$ as algebras; 
    \item $A P_{X} \simeq A P_{gX}$ as algebras. 
\end{enumerate}
\end{corolario}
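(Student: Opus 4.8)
The plan is to deduce both isomorphisms from Theorem~\ref{teorema.APX.simeq.APXG}, together with one elementary isomorphism $AP_X\simeq AP_{gX}$ built directly from the grouplike element $g$. Observe first that $gX\in\Px_1(G)$, since $g^{-1}\in X$ gives $1_H=gg^{-1}\in gX$; recall also that $g\cdot P_X=P_{gX}$ whenever $g^{-1}\in X$, and that each $P_Y$ is a central idempotent of $A$ (Proposition~\ref{proposicao.A.as.PxA.sommation}).

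\emph{Statement (2).} I would take $\lambda_g\colon AP_X\to AP_{gX}$ to be the restriction to $AP_X$ of the linear map $a\mapsto g\cdot a$ on $A$. Since $g$ is grouplike, $g\cdot(aP_X)=(g\cdot a)(g\cdot P_X)=(g\cdot a)P_{gX}\in AP_{gX}$ (using $g^{-1}\in X$), the axiom (PA2) gives $g\cdot(uv)=(g\cdot u)(g\cdot v)$ for $u,v\in AP_X$, and $\lambda_g(P_X)=g\cdot P_X=P_{gX}=1_{AP_{gX}}$; hence $\lambda_g$ is a morphism of unital algebras. For bijectivity I would show that $\mu_g\colon AP_{gX}\to AP_X$, $b\mapsto g^{-1}\cdot b$, is a two-sided inverse; this maps into $AP_X$ because $1_H\in X$ forces $g\in gX$, so $g^{-1}\cdot(bP_{gX})=(g^{-1}\cdot b)P_{g^{-1}gX}=(g^{-1}\cdot b)P_X$. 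By (PA3) one has $g^{-1}\cdot(g\cdot c)=\epsilon_{g^{-1}}c$ and $g\cdot(g^{-1}\cdot c)=\epsilon_gc$ for all $c\in A$; since the $\epsilon_s$ $(s\in G)$ are commuting central idempotents of $A$ and $g^{-1}\in X$, the idempotent $\epsilon_{g^{-1}}$ occurs among the factors defining $P_X$, whence $\epsilon_{g^{-1}}P_X=P_X$, and likewise $g\in gX$ gives $\epsilon_gP_{gX}=P_{gX}$. Therefore $\mu_g\circ\lambda_g=\id_{AP_X}$ and $\lambda_g\circ\mu_g=\id_{AP_{gX}}$, so $\lambda_g$ is an algebra isomorphism $AP_X\xrightarrow{\sim}AP_{gX}$, which is (2).

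\emph{Statement (1).} Here I would simply chain isomorphisms: Theorem~\ref{teorema.APX.simeq.APXG} applied to $X$ gives $AP_{G_X}\simeq AP_X$, and applied to $gX\in\Px_1(G)$ gives $AP_{G_{gX}}\simeq AP_{gX}$; combining with $\lambda_g$ from (2) yields $AP_{G_X}\simeq AP_{G_{gX}}$. Finally, since $g^{-1}\in X$, conjugation by $g$ is a group isomorphism $G_X\to G_{gX}$, i.e. $gG_Xg^{-1}=G_{gX}$, so $AP_{gG_Xg^{-1}}=AP_{G_{gX}}$ and (1) follows.

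I do not expect a genuine obstacle; the only delicate points are bookkeeping. One must keep separate the two conditions $g^{-1}\in X$ (needed for $g\cdot P_X=P_{gX}$ and for $\epsilon_{g^{-1}}P_X=P_X$) and $g\in gX$ (needed for $\mu_g$ to land in $AP_X$ and for $\epsilon_gP_{gX}=P_{gX}$), and one must check that $\lambda_g$ and $\mu_g$ send the local units $P_X$, $P_{gX}$ to one another. These all reduce to the explicit product formula for $P_X$ and to the commuting-central-idempotents property of the $\epsilon_s$.
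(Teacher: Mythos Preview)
Your proof is correct but takes a different route from the paper. You establish (2) first, by showing directly that the partial action by the grouplike $g$ restricts to a unital algebra isomorphism $\lambda_g\colon AP_X\to AP_{gX}$ with inverse $a\mapsto g^{-1}\cdot a$; then (1) follows by sandwiching $\lambda_g$ between two applications of Theorem~\ref{teorema.APX.simeq.APXG}. The paper proceeds in the opposite order: it proves (1) first, by observing that conjugation $c\colon h\mapsto ghg^{-1}$ is a Hopf automorphism of $H$, hence induces an algebra automorphism $c'$ of $\Hpar$ (and of $A$) sending $\epsilon_h\mapsto\epsilon_{ghg^{-1}}$ and therefore $P_{G_X}\mapsto P_{gG_Xg^{-1}}=P_{G_{gX}}$; item (2) is then deduced from (1) together with Theorem~\ref{teorema.APX.simeq.APXG}. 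Your argument is more elementary and stays entirely within the partial-action formalism already developed (indeed, the paper itself invokes your map $\lambda_g$ informally in the worked examples of Section~\ref{section.nilpotent}); the paper's argument is more structural, exhibiting the isomorphism in (1) as the restriction of a global automorphism of $A$, which is slightly more information but requires appealing to the functoriality of the construction $H\mapsto\Hpar$.
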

\begin{proof} Given $g \in G$, the map 
 $$ \begin{array}{cccc}
     c: &  H & \larr & H \\
         &  h & \lmap & ghg^{-1} \\
 \end{array} $$
 is a Hopf algebra automorphism, hence it induces the algebra isomorphism  
 $$  \begin{array}{cccc}
     c': & \Hpar & \larr & \Hpar \\
      & [h] & \lmap & [ghg^{-1}]
 \end{array} $$
which satisfies $c'(\eps_h) = \eps_{ghg^{-1}}$; thus $c'$ restricts to an algebra automorphism of $A$ and, from the definition of $P_X$, it follows that $c'(P_X) = P_{gXg^{-1}}$. Moreover, given that $g G_X g^{-1} = G_{gX}$, 
\[
c' (A P_{G_X} ) = c' (A) c' (P_{G_X})  = A P_{gG_Xg^{-1}} = A P_{G_{gX}}. 
\] 
It then follows from Theorem \ref{teorema.APX.simeq.APXG} that, for every $g \in G$ such that $g^{-1} \in X$, 
\[
A P_X \simeq A P_{G_X} \simeq A P_{G_{gX}} \simeq A P_{gX}. 
\]
\end{proof}

        \begin{corolario} \label{cor:decomposicao.Apar.multiplicidades}
                Consider a pointed Hopf algebra $H$ with finite group of grouplike elements $G$ and invertible antipode.
        If $A = \apar$, then
        $$  A \cong \bigoplus_{L \leq G} q(G,L)AP_L, $$
        where $q(G,L)AP_L$ denotes the direct sum of $q(G,L)$ copies of $AP_L$, and $q(G,L)$ is the cardinality of the set $\{X \in \Px_1(G) \mid G_X \text{ is conjugate to } L\}$.
    \end{corolario}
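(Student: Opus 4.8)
The plan is to derive this from the block decomposition $A = \bigoplus_{X \in \Px_1(G)} AP_X$ of Proposition~\ref{proposicao.A.as.PxA.sommation} by regrouping its summands according to the $G$-conjugacy class of the stabilizer $G_X$. Two ingredients, both already available, do all the work. First, Theorem~\ref{teorema.APX.simeq.APXG} provides an algebra isomorphism $AP_X \simeq AP_{G_X}$ for every $X \in \Px_1(G)$. Second, the conjugation automorphism $c' \colon \Hpar \to \Hpar$, $[h] \mapsto [ghg^{-1}]$, used in the proof of Corollary~\ref{cor:AP_X=AP_{gX}}, restricts to an algebra automorphism of $A$ carrying $P_L$ to $P_{gLg^{-1}}$; hence $AP_L \simeq AP_{gLg^{-1}}$ as algebras for every subgroup $L \leq G$ and every $g \in G$. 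Combining the two, for any $X \in \Px_1(G)$ and any subgroup $L$ conjugate to $G_X$ one obtains $AP_X \simeq AP_{G_X} \simeq AP_{L}$ as algebras; in particular, up to isomorphism, $AP_L$ depends only on the conjugacy class of $L$.

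First I would fix representatives $L_1, \dots, L_r$ of the conjugacy classes of subgroups of $G$ and set, for $1 \leq i \leq r$,
$$ \Cc_i = \{\, X \in \Px_1(G) \mid G_X \text{ is conjugate to } L_i \,\}. $$
Since $G_X$ is a subgroup of $G$ for every $X \in \Px_1(G)$, the family $\Cc_1, \dots, \Cc_r$ partitions $\Px_1(G)$, and by definition $|\Cc_i| = q(G, L_i)$. Reindexing Proposition~\ref{proposicao.A.as.PxA.sommation} along this partition,
$$ A \;=\; \bigoplus_{i=1}^{r} \Bigl( \bigoplus_{X \in \Cc_i} AP_X \Bigr). $$
For each $i$, the isomorphisms $AP_X \simeq AP_{L_i}$ $(X \in \Cc_i)$ obtained above are isomorphisms of the two-sided unital ideals $AP_X$, i.e.\ of ring direct summands of $A$, so they assemble into an algebra isomorphism $\bigoplus_{X \in \Cc_i} AP_X \cong q(G, L_i)\, AP_{L_i}$, the direct sum of $q(G, L_i)$ copies of $AP_{L_i}$. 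Taking the direct sum over $i$ gives $A \cong \bigoplus_{i=1}^{r} q(G, L_i)\, AP_{L_i}$, which is precisely the claimed $A \cong \bigoplus_{L \leq G} q(G, L)\, AP_L$, the indexing ``$L \leq G$'' being read up to conjugacy (conjugate subgroups $L$ contribute isomorphic summands with the same multiplicity $q(G,L)$, so only one representative per class is listed).

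I do not expect any genuine obstacle here: the substantive content — that $AP_X \simeq AP_{G_X}$ and $AP_L \simeq AP_{gLg^{-1}}$ hold as isomorphisms of \emph{algebras}, not merely of vector spaces — was already established in Theorem~\ref{teorema.APX.simeq.APXG} and Corollary~\ref{cor:AP_X=AP_{gX}}. The only points that deserve a line of care are that one is gluing isomorphisms of the ring direct summands $AP_X$ into an isomorphism of $A$ itself (legitimate exactly because these are the unital ideals produced in Proposition~\ref{proposicao.A.as.PxA.sommation}), and the bookkeeping of multiplicities: $q(G,L)$ is constant on conjugacy classes of subgroups and $\sum_{i=1}^r q(G,L_i) = |\Px_1(G)|$, which is the consistency check against the original decomposition.
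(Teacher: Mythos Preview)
Your proof is correct and is precisely the argument the paper has in mind: the corollary is stated without proof, as the evident consequence of combining the decomposition $A = \bigoplus_{X \in \Px_1(G)} AP_X$ (Proposition~\ref{proposicao.A.as.PxA.sommation}) with the isomorphisms $AP_X \simeq AP_{G_X}$ (Theorem~\ref{teorema.APX.simeq.APXG}) and $AP_L \simeq AP_{gLg^{-1}}$ (Corollary~\ref{cor:AP_X=AP_{gX}}). Your observation that the index set ``$L \leq G$'' must be read as one representative per conjugacy class is also correct and worth making explicit.
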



	\section{Examples}
	\label{se:examples}

 We are going to present three examples.  The first one is the algebra of partial representations of a finite group, which was already described in the article \cite{piccione}. The other two examples come from two finite-dimensional pointed Hopf algebras of rank one \cite{krop2006finite} that share the same group of grouplike elements.

The strategy we will use to describe the $\Hpar$ of the previously mentioned algebras is as follows. According to Theorem \ref{teorema.Hpar.igual.AH} we have an isomorphism of algebras $\Hpar \simeq \underline{A\# H}$, where $A$ denotes the subalgebra $\apar$ of $\Hpar$, and Theorem \ref{proposicao.pointed.Hpar.direct.sum} says that $\Hpar = \oplus_k \Hpar\Gamma_{X_k}$. Since $\Gamma_X \in A$, we have that $\Hpar\Gamma_X \simeq \underline{A\Gamma_X\# A}$. So, to describe the ideals $\Hpar\Gamma_{X_k}$, we will first describe $A\Gamma_{X_k}$ and then use the partial smash product.


\subsection{The group algebra}
\label{sub.Group_Algebra}

We now consider the case where $H$ is the group algebra, $H=\K G$, of a finite group $G$. Our aim in this subsection is to recover (partially) the description of the algebra $\Hpar$ presented in  \cite{piccione}.

By Theorem \ref{proposicao.pointed.Hpar.direct.sum}, there exist $X_1, X_2, \ldots, X_n \in \Px_1(G)$, such that 
$$ \Hpar = \bigoplus_{k=1}^n \Hpar\Gamma_{X_k}. $$
Also, $\Gamma_{X} \in A$, for all $X \in \Px_1(G)$, and we have a direct sum decomposition
$$ A = \bigoplus_{k=1}^n A\Gamma_{X_k}. $$
Finally, recall that $\Hpar\Gamma_{X_k} = \underline{A\Gamma_{X_k}\# H}$.
For this reason we shall focus on the ideals $A\Gamma_{X}$ for $X \in \Px_1(G)$.

First, we have:
$$ \Gamma_X = \dfrac{1}{|G_X|}\sum_{g^{-1}\in X}P_{gX}. $$
Also, since the set $G$ is a base of the Hopf algebra $H$, then $\apar$ is generated, as an algebra, by the elements of the form $\eps_g$, for $g \in G$.
Now, taking into account the fact that 
$$ P_X = \prod_{x\in X}\eps_x\prod_{y\in G\smallsetminus X}(1-\eps_y), $$
it follows immediately that 
$$ \eps_gP_X = 
\left\{\begin{array}{cc}
    P_X, & \textrm{if } g\in X, \\
    0, & \textrm{otherwise}. 
\end{array} \right.
$$
So notice that for $g^{(1)}, g^{(2)}, ..., g^{(n)} \in G$, then 
$$ \eps_{g^{(1)}}\eps_{g^{(1)}}...\eps_{g^{(n)}}P_X = \left\{\begin{array}{cl}
    P_X, & \textrm{if } g^{(1)}, g^{(2)}, ..., g^{(n)} \in X, \\
    0, & \textrm{otherwise}. 
\end{array} \right.$$
Hence, for every $X$, the ideal $AP_X $ is isomorphic to $ \K$ as a vector space. However, notice also that each of these ideals carries a partial $G$-action which depends on $X$. 

The elements $P_X$ are pairwise orthogonal central idempotents of the algebra $A$, hence by Proposition \ref{proposicao.A.as.PxA.sommation} we conclude that
$$ A = \bigoplus_{X \in \Px_1(G)} AP_X \simeq \bigoplus_{X\in \Px_1(G)} \K. $$

We just have shown the following result.
\begin{proposicao} For any $X \in \Px_1(G)$, the set of elements $P_X$, is a set of pairwise orthogonal central idempotents of the algebra $A$, the sum of which is the unit of $A$. Furthermore,  the following equations hold:
    $$ \eps_gP_X = \left\{\begin{array}{cc}
    P_X, & \textrm{if } g\in X, \\
    0, & \textrm{otherwise}. 
\end{array} \right. $$
    In other words, we have an algebra isomorphism 
    $$ A = \bigoplus_{X \in \Px_1(G)} AP_X \simeq \bigoplus_{X\in \Px_1(G)}\K $$
\end{proposicao}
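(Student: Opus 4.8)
The plan is to read everything off the general decomposition already obtained in Proposition~\ref{proposicao.A.as.PxA.sommation}, specialised to the base algebra $A = \apar$, and then exploit the fact that when $H = \K G$ the algebra $A$ is generated by the grouplike-indexed idempotents $\eps_g$. First I would recall that, by Theorem~\ref{teorema.Hpar.igual.AH}, $A = \apar \subseteq \Hpar$ is a symmetric partial left $H$-module algebra with $h \cdot a = [h_1]a[S(h_2)]$ and $h \cdot \one = \eps_h$ for every $h \in H$; since $H = \K G$ the antipode is invertible and $G$ is finite, so all hypotheses of Proposition~\ref{proposicao.A.as.PxA.sommation} are in force. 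That proposition then immediately yields that $\{P_X\}_{X \in \Px_1(G)}$, with $P_X = \prod_{x\in X}\eps_x\prod_{y\in G\smallsetminus X}(\one - \eps_y)$, is a complete family of pairwise orthogonal central idempotents of $A$ summing to $\one$, together with the direct sum decomposition $A = \bigoplus_{X \in \Px_1(G)} AP_X$ into unital ideals. This is the first assertion.

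Next I would establish the multiplication rule. Since every $g \in G$ is grouplike, $\eps_g = [g][g^{-1}]$ is a central idempotent of $A$ by Proposition~\ref{proposicao.epsg.idempotent.central}, hence so is $\one - \eps_g$, and all the factors making up $P_X$ commute with one another and with $\eps_g$. If $g \in X$ then $\eps_g$ is one of those factors, and since $\eps_g^2 = \eps_g$ multiplication by $\eps_g$ leaves $P_X$ unchanged, i.e.\ $\eps_g P_X = P_X$; if $g \notin X$ then $\one - \eps_g$ is one of the factors, and $\eps_g(\one - \eps_g) = 0$ forces $\eps_g P_X = 0$. A one-line induction on $n$ then gives, for arbitrary $g^{(1)}, \dots, g^{(n)} \in G$,
$$ \eps_{g^{(1)}} \cdots \eps_{g^{(n)}} P_X = \left\{\begin{array}{cl}
P_X, & \textrm{if } g^{(1)}, \dots, g^{(n)} \in X,\\
0, & \textrm{otherwise}.
\end{array} \right. $$

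For the final isomorphism I would use that, $H$ being the group algebra, the linear map $h \mapsto \eps_h$ is defined on all of $H$ with $\eps_g = [g][g^{-1}]$ for $g \in G$, so $A = \apar$ is generated as an algebra by $\{\eps_g : g \in G\}$ and is therefore spanned by the monomials $\eps_{g^{(1)}} \cdots \eps_{g^{(n)}}$ (together with $\one$). By the displayed formula, multiplying any such monomial by $P_X$ returns either $P_X$ or $0$; hence $AP_X$ is the one-dimensional space $\K P_X$, and the unital algebra map $\K \to AP_X$, $\lambda \mapsto \lambda P_X$, is surjective. It is injective because $P_X \neq 0$: under the isomorphism $\K_{par}G \simeq \K \calG(G)$ of \cite[Thm 2.6]{piccione}, $P_X$ is the identity morphism of the object $X \in \Px_1(G)$, which is nonzero --- this nonvanishing is also evident from the block form $\K_{par}G \simeq \bigoplus_i \Mat_{m_i}(\K G_i)$ recalled in the Introduction. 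Combining $AP_X \cong \K$ with the decomposition $A = \bigoplus_{X \in \Px_1(G)} AP_X$ from the first part gives the asserted algebra isomorphism $A \cong \bigoplus_{X \in \Px_1(G)} \K$.

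None of the steps beyond invoking Proposition~\ref{proposicao.A.as.PxA.sommation} involves any real computation: the multiplication rule is pure bookkeeping with commuting idempotents, and the collapse of each ideal $AP_X$ to a copy of $\K$ follows solely from the smallness of a generating set of $\apar$ when $H = \K G$. The only genuinely external input is the nonvanishing of the idempotents $P_X$, which I would borrow from the structure theory of $\K_{par}G$ in \cite{piccione} rather than reprove; without it one would still obtain $A = \bigoplus_X AP_X$ but could not identify each summand with $\K$.
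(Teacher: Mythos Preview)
Your argument is correct and follows essentially the same route as the paper: invoke Proposition~\ref{proposicao.A.as.PxA.sommation} for the system of orthogonal central idempotents, read off $\eps_g P_X$ directly from the factorisation of $P_X$ as a product of commuting idempotents, and collapse each $AP_X$ to one dimension using that $\apar$ is generated by the $\eps_g$. The only difference is that you explicitly justify $P_X \neq 0$ via \cite{piccione}, whereas the paper leaves this implicit.
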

Each direct summand $AP_X$ of $A$ admits $\{P_X\}$ as a basis.

We know that
$$\Gamma_X = \dfrac{1}{|G_X|}\sum_{g^{-1}\in X}P_{gX}, $$
and our next aim is to find a basis for each $\Hpar\Gamma_X$. As we already mentioned, $\Hpar\Gamma_X \simeq \underline{A\Gamma_X\# H}$.
For each $\Gamma_XA$, we have the basis given by $\{P_{gX} \mid g^{-1} \in X\}$. Now, using the equivalence relation on $\Px_1(G)$, we can write:
$$\Gamma_X = \sum_{Y \sim X} P_Y, $$
which implies that:
$$ A\Gamma_{X} = \bigoplus_{Y\sim X} AP_X \simeq l(X)\K, $$
where $l(X)\K$ denotes the direct sum of $l(X)$ copies of $\K$, and $l(X)$ is the cardinality of the equivalence class $\{Y\in \Px_1(G) \mid Y \sim X\}$. Now we are going to take $\{P_{Y} \mid Y \sim X\}$ as a basis of $A\Gamma_X$, so $\{P_Y\# g \mid X \sim Y \textrm{ and } g\in G\}$ generates $\underline{\Gamma_XA\# H} \simeq \Hpar\Gamma_X$. Since  
$$ P_Y\# g = P_Y(g\cdot 1_A)\x g = P_Y\eps_g\x g = \left\{\begin{array}{cc}
    P_Y\x g, & \textrm{if } g\in Y, \\
    0, & \textrm{otherwise}, \end{array}\right.  $$
then 
$$ \bigl\{P_X\# g \mid X \in \Px_1(G) \textrm{ and } g\in X\bigr\} $$
is a basis of $\underline{A\# H}$. Since 
$$ \begin{array}{ccc}
    \underline{A\# H} & \larr & \Hpar \\
    a\# h & \lmap & a[h]
\end{array} $$
is an isomorphism, then $$ \{P_X[g] \mid X \in \Px_1(G) \textrm{ and } g\in X\} $$
is a basis of $\Hpar$.

\subsection{A Hopf algebra of rank one: nilpotent case} 
\label{section.nilpotent}

Finite-dimensional pointed Hopf algebras of rank one over an algebraically closed field $\K$ of characteristic zero were introduced and classified  in \cite{krop2006finite}. 
A Hopf algebra has rank $n$ if its coradical $H_0$ is a Hopf subalgebra, $H_1$ generates $H$ as an algebra, and the dimension of $\K \otimes_{H_0} H_1$ is $n+1$, where $H_1 = H_0 \wedge H_0$ is the second term of the coradical filtration (see equality \eqref{equation.coradical.filtration}), $\K$ is a right $H_0$-module via the trivial representation and  $H_1$ is a left $H_0$-module via multiplication in $H$.

Note that if $\dim_{\K} (\K \otimes_{H_0} H_1) = 2$ then we have that $H_1 = H_0 \oplus H_0x$ for some $x \in H_1 \setminus H_0$. This element is then a skew-primitive of $H$ and, without loss of generality, we may take $x$ such that $\Delta(x) = x \otimes a + 1 \otimes x$ for some $a \in G$.  

Each pointed Hopf algebra of rank one is associated to a group datum. According to \cite{krop2006finite}, a \textbf{group  datum} 
is a 4-tuple $\mathcal{D} = (G,\chi, a,\kappa)$, where $G$ is a finite group, $\chi: G \to \K^{\times}$ is a character for $G$, $a \in Z(G)$ and $\kappa \in \K$, where either  $\chi^n = 1$ or $ \kappa (a^n - 1) = 0$, where $n$ is the
order of $\chi(a)$. 
We say that $\mathcal{D}$ is of nilpotent type if $ \kappa (a^n - 1) = 0$, and otherwise we say that $\mathcal{D}$ is of non-nilpotent type. 
Given a group datum $\mathcal{D}$, there is an associated Hopf algebra $H_{\mathcal{D}}$ which is given as follows: it is generated by the elements of $G$ and by an element $x$, the relations between elements of $G$ hold in $H_{\mathcal{D}}$, and also 
\[
x^n = \kappa (a^n -1), \ \ \ \ xg = \chi(g) gx, 
\]
for all $g \in G$. The comultiplication is given on the generators by  
\[
\Delta(x) = x \otimes a + 1 \otimes x, \ \ \ \ \Delta(g) = g \otimes g
\]
and this determines the counit and antipode: since $x$ is a skew-primitive and $g$ is a grouplike element for each $g \in G$, we have that  
\[
\varepsilon(x) = 0, \ \ \ \varepsilon(g) = 1, 
\]
\[
S(x) = -xa^{-1} = -\chi(a) ax, \ \ \ \  S(g) = g^{-1}.
\]
Finally, a $\K$-basis for $H_{\mathcal{D}}$ is the set 
\[
\{
gx^ m; \ g \in G, 0 \leq m < n
\},
\]
and therefore this algebra has dimension $|G|n$. 

In \cite[Thm 1]{krop2006finite} it is shown that each finite-dimensional pointed Hopf algebra of rank one over an algebraically closed field $\K$ of characteristic zero is isomorphic to a Hopf algebra $H_{\mathcal{D}}$, that each Hopf algebra $H_{\mathcal{D}}$ is in fact a pointed Hopf algebra of rank one, and isomorphisms between Hopf algebras $H_{\mathcal{D}}$ and $H_{\mathcal{D'}}$ are described in terms of equivalences between the group data $\mathcal{D}$ and $\mathcal{D'}$.

In the following we consider two rank one Hopf algebras of dimension 8.

\bigskip

    Let $H$ be the Hopf algebra generated by $g$ and $x$, subject to the following relations:
    $$ g^4=1, \ \ x^2=0, \ \ xg = -gx, $$
    and with structural maps, as follows:
    $$ \Delta(g) = g\x g, \ \ \Delta(x)=x\x g + 1\x x, $$
    $$ S(g)=g^{3}, \ \ S(x) = gx. $$
    This algebra has $\{1, g, g^2, g^3, x, gx, g^2x, g^3x\}$ as a basis. This is a pointed Hopf algebra  
    of rank one with group datum  $\mathcal{D} = (G, \chi, g, 0)$, where the group of grouplike elements is $G=\{1,g,g^2,g^3\} \simeq \Z_4$ and $\chi(g^m) = (-1)^m$.

    Our aim is to describe $\Hpar$. In order to do that, we use the Theorem \ref{teorema.Hpar.igual.AH}, which states that $\Hpar \simeq \underline{A\# H}$, where $A$ is the algebra generated by $\epsilon_h=[h_1][S(h_2)]$, for all $h \in H$. As in the previous example, we are going to use the fact that, there exists $X_1, X_2, ..., X_n$ in $\Px_1(G)$, such that 
    $$ \Hpar = \bigoplus_{k=1}^{n}(\Hpar\Gamma_{X_k}), $$
    and $\Hpar\Gamma_{X_k} = \underline{A\Gamma_{X_k}\# H}$. Therefore, in order to describe $\Hpar\Gamma_{X_k}$, first we will describe $A\Gamma_{X_k}$ and then use the partial smash product.

    In this configuration, we have eight elements of $\mathcal{P}_1(G)$, and for each one of them, we have an idempotent:
    $$ P_{\{1\}} = (1_A-\epsilon_g)(1\um -\epsilon_{g^2})(1\um -\epsilon_{g^3}),$$ 
    $$ P_{\{1,g\}} = \epsilon_g(1\um -\epsilon_{g^2})(1\um -\epsilon_{g^3}), \ \ \ P_{\{1,g^2\}} =  \epsilon_{g^{2}}(1\um -\epsilon_{g})(1\um -\epsilon_{g^3}),$$ 
    $$P_{\{1,g^3\}} = \epsilon_{g^{3}}(1\um -\epsilon_{g^2})(1\um -\epsilon_{g^3}),  \ \ \ P_{\{1,g,g^2\}} = \epsilon_{g}\epsilon_{g^2}(1\um -\epsilon_{g^3}), $$ $$
    P_{\{1,g,g^3\}} = \epsilon_{g}\epsilon_{g^3}(1\um -\epsilon_{g^2}), \ \ 
    P_{\{1,g^2,g^3\}} = \epsilon_{g^2}\epsilon_{g^3}(1\um -\epsilon_{g}), $$
    $$ P_{\{1,g,g^2,g^3\}} = \epsilon_{g}\epsilon_{g^2}\epsilon_{g^3}. $$

    Now when considering the algebra $A$, we see that it is subject to the following relations:
\begin{eqnarray}
\epsilon_{1_H}& =& 1_A, 
\label{equality.Apar.0} \\
\label{equality.Apar.1}
\epsilon_h & = &  \epsilon_{h_1}\epsilon_{h_2}, \\ 
\label{equality.Apar.2}
\epsilon_{h_1k}\epsilon_{h_2} & = & \epsilon_{h_1}\epsilon_{h_2k},
\end{eqnarray}
    for all $h,k \in H$.
    Since $\{1, g, g^2, g^3, x, gx, g^2x, g^3x\}$ is a basis of $H$, we need only to deal with these equations when $h,k$ are basis elements. For $h=g^m$, with $m=0,1,2,3$, we obtain that $\epsilon_{g^m}$ is a central idempotent of $A$. 
    
    Now consider $h=g^lx$ in Equality \eqref{equality.Apar.1} for some $l=0,1,2,3$.  Since $\Delta(g^lx)=g^lx \x g^{l+1} + g^{l}\x g^lx$, we have \begin{equation}\label{equation.glx.glxga.gl.glx}
        \epsilon_{g^lx} = \epsilon_{g^lx}\epsilon_{g^{l+1}} + \epsilon_{g^l}\epsilon_{g^lx}.
    \end{equation}
    Next, Equality \eqref{equality.Apar.2}  may be studied in two cases according to the choices of $k$ and $h$. 
First,  setting $k=g^m$ and $h=g^lx$, we obtain
    $$ \epsilon_{g^lxg^m}\epsilon_{g^{l+1}} + \epsilon_{g^{l+m}}\epsilon_{g^lx} = \epsilon_{g^lx}\epsilon_{g^{m+l+1}} + \epsilon_{g^l}\epsilon_{g^lxg^m}. $$
    Defining a new parameter $\alpha=l+m$, we have
    $$ (-1)^{\alpha+l}\epsilon_{g^{\alpha}x}\epsilon_{g^{l+1}} + \epsilon_{g^{\alpha}}\epsilon_{g^lx} = \epsilon_{g^lx}\epsilon_{g^{\alpha+1}} + (-1)^{\alpha}\epsilon_{g^l}\epsilon_{g^{\alpha}x}, $$
    therefore
    \begin{equation}\label{equation.gAxgL.gLxgA}
    (-1)^{\alpha+l}\epsilon_{g^{\alpha}x}\bigl[\epsilon_{g^{l+1}}-\epsilon_{g^{l}}\bigr] = \epsilon_{g^{l}x}\bigl[\epsilon_{g^{\alpha+1}}-\epsilon_{g^{\alpha}}\bigr].
    \end{equation}
    Now if we substitute $k=g^mx$ and $h=g^lx$ in \eqref{equality.Apar.2} and use the fact that $x^2=0$, we obtain the equations \begin{equation}\label{equation.gAxgLx.gLxgAx}
        \epsilon_{g^{\alpha}x}\epsilon_{g^lx} = \epsilon_{g^lx}\epsilon_{g^{\alpha+1}x},
    \end{equation}
    where $\alpha=l+m$. Now, taking each one of the equations \eqref{equation.glx.glxga.gl.glx}, \eqref{equation.gAxgL.gLxgA}, and \eqref{equation.gAxgLx.gLxgAx}, and multiplying by the central idempotents of the form $P_X$ already listed, we have:
    $$
    \left\{\begin{array}{rcl}
          \\
         P_{\{1\}}\epsilon_{gx} & = & P_{\{1\}}\epsilon_{g^2x} \ = \ 0, \\
         P_{\{1\}}\epsilon_{g^3x} & = & P_{\{1\}}\epsilon_{x}, \\
         P_{\{1\}}(\epsilon_x)^2 & = & 0,
    \end{array}\right.
    \left\{\begin{array}{rcl}
         P_{\{1,g\}}\epsilon_{x} & = & P_{\{1,g\}}\epsilon_{g^2x} \ = \ 0, \\
         P_{\{1,g\}}\epsilon_{g^3x} & = & -P_{\{1,g\}}\epsilon_{gx}, \\
         P_{\{1,g\}}(\epsilon_{gx})^2 & = & 0.
    \end{array}\right.
    $$
    Before we proceed, notice that if $k \in G$ and $k^{-1}\in X$, then the map $\\ k\cdot -:P_XA \to P_{kX}A$ is a multiplicative isomorphism. S  ince we have already listed the equations related with $P_{\{1,g\}}$, applying $g^{3}\cdot -$, we obtain the equations related to $P_{\{1,g^3\}}$, namely:
    $$ \left\{\begin{array}{rcl}
         P_{\{1,g^3\}}\epsilon_{g^3x} & = & P_{\{1,g\}}\epsilon_{gx} \ = \ 0, \\
         P_{\{1,g^3\}}\epsilon_{g^2x} & = & -P_{\{1,g\}}\epsilon_{x}, \\
         P_{\{1,g^3\}}(\epsilon_{x})^2 & = & 0.
    \end{array}\right. $$
    We now describe the equations related with $P_{\{1,g^2\}}$.
    
    First, we have that $P_{\{1,g^2\}}g^l=(\delta_{l,0}+\delta_{l,2})P_{\{1,g^2\}}$, so  equation \eqref{equation.glx.glxga.gl.glx} gives us
    $$ P_{\{1,g^2\}}\epsilon_{g^lx} = \bigl(\delta_{l,0}+\delta_{l,1}+\delta_{l,2}+\delta_{l,3}\bigr)P_{\{1,g^2\}}\epsilon_{g^lx} = P_{\{1,g^2\}}\epsilon_{g^lx}, $$
    which does not provide any information. Now working with \eqref{equation.gAxgL.gLxgA}, we see that
    $$ 
    (-1)^{\alpha+l}P_{\{1,g^2\}} \epsilon_{g^{\alpha}x}\underbrace{\bigl[(\delta_{l,1}+\delta_{l,3})-(\delta_{l,0}+\delta_{l,2})\bigr]}_{=(-1)^l} = P_{\{1,g^2\}} \epsilon_{g^{l}x} \underbrace{\bigl[(\delta_{\alpha,1}+\delta_{\alpha,3})-(\delta_{\alpha,0}+\delta_{\alpha,2})\bigr]}_{=(-1)^{\alpha}}
    $$
    $$ \Rightarrow \ P_{\{1,g^2\}}\epsilon_{g^{\alpha}x} = P_{\{1,g^2\}}\epsilon_{g^lx}; $$
    and once again we do not get any new information from the equations \eqref{equation.gAxgLx.gLxgAx}. Finally, $P_{\{1,g^2\}}A$ is only subject to the following equations
    $$
    P_{\{1,g^2\}}\epsilon_{g^lx} = P_{\{1,g^2\}}\epsilon_{x}, \ \forall l=0,1,2,3.
    $$
    Now, once again we can describe the equations related to $P_{\{1,g,g^2\}}$ and apply the maps $g^3\cdot -:P_{\{1,g,g^2\}}A \to P_{\{1,g,g^3\}}A$ and $g^{2}\cdot -:P_{\{1,g,g^2\}}A \to P_{\{1,g^2,g^3\}}A$, to find the equation related to $P_{\{1,g,g^3\}}$ and $P_{\{1,g^2,g^3\}}$: 
    $$
    \left\{\begin{array}{rcl}
        P_{\{1,g,g^2\}}\epsilon_{x} & = & P_{\{1,g,g^2\}}\epsilon_{gx} \ = \ 0, \\
         P_{\{1,g,g^2\}}\epsilon_{g^2x} & = & P_{\{1,g,g^2\}}\epsilon_{g^3x}, \\
         P_{\{1,g,g^2\}}(\epsilon_{g^2x})^2 & = & 0, 
    \end{array}\right.
    \stackrel{g^3\cdot}{\lmap}
    \left\{\begin{array}{rcl}
        P_{\{1,g,g^3\}}\epsilon_{g^3x} & = & P_{\{1,g,g^3\}}\epsilon_{x} \ = \ 0, \\
         P_{\{1,g,g^3\}}\epsilon_{gx} & = & P_{\{1,g,g^3\}}\epsilon_{g^2x}, \\
         P_{\{1,g,g^3\}}(\epsilon_{gx})^2 & = & 0, 
    \end{array}\right.
    $$
    $$
    \stackrel{g^2\cdot}{\lmap}
    \left\{\begin{array}{rcl}
         P_{\{1,g^2,g^3\}}\epsilon_{g^2x} & = & P_{\{1,g^2,g^3\}}\epsilon_{g^3x} \ = \ 0, \\
         P_{\{1,g^2,g^3\}}\epsilon_{x} & = & P_{\{1,g^2,g^3\}}\epsilon_{gx}, \\
         P_{\{1,g^2,g^3\}}(\epsilon_{x})^2 & = & 0, 
    \end{array}\right.
    $$
    By the Theorem \ref{teorema.thetaH.multiplicative.section.pH}, we have that $$P_{\{1,g,g^2,g^3\}}\epsilon_{g^lx} = \epsilon(g^lx)P_{\{1,g,g^2,g^3\}} = 0.$$
    We conclude this part with the following result.

    \begin{proposicao}\label{proposicao.example.A}
        The algebra $A$ admits $P_{\{1\}}$, $P_{\{1,g\}}$, $P_{\{1,g^2\}}$, $P_{\{1,g^3\}}$, $P_{\{1,g,g^2\}}$, $P_{\{1,g,g^3\}}$, $P_{\{1,g^2,g^3\}}$, and $P_{\{1,g,g^2,g^3\}}$ as mutually orthogonal central idempotents. Furthermore, the sum of those elements is the unit of $A$, and the following equalities hold:
        
        $$
    \left\{\begin{array}{rcl}
         P_{\{1\}}\epsilon_{g^l} & = & \delta_{l,0}P_{\{1\}}, \\
         P_{\{1\}}\epsilon_{g^lx} & = & (\delta_{l,0}+\delta_{l,3})P_{\{1\}}\epsilon_{x}, \\
         P_{\{1\}}(\epsilon_x)^2 & = & 0,
    \end{array}\right.
    \left\{\begin{array}{rcl}
         P_{\{1,g\}}\epsilon_{g^l} & = & (\delta_{l,0}+\delta_{l,1})P_{\{1,g\}}, \\
         P_{\{1,g\}}\epsilon_{g^lx} & = & (\delta_{l,1}-\delta_{l,3})P_{\{1,g\}}\epsilon_{gx}, \\
         P_{\{1,g\}}(\epsilon_{gx})^2 & = & 0,
    \end{array}\right.
    $$
    $$
    \left\{\begin{array}{rcl}
         P_{\{1,g^3\}}\epsilon_{g^l} & = & (\delta_{l,0}+\delta_{l,3})P_{\{1,g^3\}}, \\
         P_{\{1,g^3\}}\epsilon_{g^lx} & = & (\delta_{l,0}-\delta_{l,2})P_{\{1,g^3\}}\epsilon_{x}, \\
         P_{\{1,g^3\}}(\epsilon_{x})^2 & = & 0.
    \end{array}\right.
    \left\{\begin{array}{rcl}
         P_{\{1,g^2\}}\epsilon_{g^l} & = & (\delta_{l,0}+\delta_{l,2})P_{\{1,g^2\}}, \\
         P_{\{1,g^2\}}\epsilon_{g^lx} & = & P_{\{1,g^2\}}\epsilon_{x}, \\
    \end{array}\right.
    $$
    $$
    \hspace{-1.0cm}\left\{\begin{array}{rcl}
        P_{\{1,g,g^2\}}\epsilon_{g^l} & = & (\delta_{l,0}+\delta_{l,1}+\delta_{l,2})P_{\{1,g,g^2\}}, \\
         P_{\{1,g,g^2\}}\epsilon_{g^lx} & = & (\delta_{l,2}+\delta_{l,3})P_{\{1,g,g^2\}}\epsilon_{g^2x}, \\
         P_{\{1,g,g^2\}}(\epsilon_{g^2x})^2 & = & 0, 
    \end{array}\right.
    \left\{\begin{array}{rcl}
        P_{\{1,g,g^3\}}\epsilon_{g^l} & = & (\delta_{l,0}+\delta_{l,1}+\delta_{l,3})P_{\{1,g,g^3\}}, \\
         P_{\{1,g,g^3\}}\epsilon_{g^lx} & = & (\delta_{l,1}+\delta_{l,2})P_{\{1,g,g^3\}}\epsilon_{gx}, \\
         P_{\{1,g,g^2\}}(\epsilon_{gx})^2 & = & 0, 
    \end{array}\right.
    $$
    $$
    \left\{\begin{array}{rcl}
        P_{\{1,g^2,g^3\}}\epsilon_{g^l} & = & (\delta_{l,0}+\delta_{l,2}+\delta_{l,3})P_{\{1,g^2,g^3\}}, \\
         P_{\{1,g^2,g^3\}}\epsilon_{g^lx} & = & (\delta_{l,0}+\delta_{l,1})P_{\{1,g^2,g^3\}}\epsilon_{x}, \\
         P_{\{1,g^2,g^3\}}(\epsilon_{x})^2 & = & 0, 
    \end{array}\right.
    \left\{\begin{array}{rcl}
        P_{\{1,g,g^2,g^3\}}\epsilon_{g^l} & = & P_{\{1,g,g^2,g^3\}}, \\
        P_{\{1,g,g^2,g^3\}}\epsilon_{g^lx} & = & 0,
    \end{array}\right.
    $$
    \end{proposicao}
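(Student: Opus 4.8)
The plan is to derive every displayed equality from the three defining relations of $\apar$ given in Theorem~\ref{teorema.A.propriedade.universal}, namely $\epsilon_{1_H}=1_A$, $\epsilon_h=\epsilon_{h_{1}}\epsilon_{h_{2}}$ and $\epsilon_{h_{1}k}\epsilon_{h_{2}}=\epsilon_{h_{1}}\epsilon_{h_{2}k}$, by evaluating them on the $\K$-basis $\{g^m,\,g^mx : 0\le m\le 3\}$ of $H$ and then multiplying the resulting identities in $A$ by the idempotents $P_X$. To begin with, the assertion that the eight listed elements form a complete set of mutually orthogonal central idempotents of $A$ summing to $1_A$ is exactly Proposition~\ref{proposicao.A.as.PxA.sommation} applied to $G\simeq\Z_4$: here $\Px_1(\Z_4)$ is the set of the eight subsets of $\Z_4$ containing $1$, and these are precisely $\{1\},\{1,g\},\{1,g^2\},\{1,g^3\},\{1,g,g^2\},\{1,g,g^3\},\{1,g^2,g^3\}$ and $\{1,g,g^2,g^3\}$.

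Next I would record three auxiliary facts. (i) For $h=g^m$ the relation $\epsilon_h=\epsilon_{h_{1}}\epsilon_{h_{2}}$ merely says that $\epsilon_{g^m}$ is idempotent, and by Proposition~\ref{proposicao.epsg.idempotent.central} it is central; hence the $\epsilon_{g^m}$ are pairwise commuting idempotents. (ii) Using $\Delta(g^lx)=g^lx\otimes g^{l+1}+g^l\otimes g^lx$ together with $xg^m=(-1)^mg^mx$ and $x^2=0$, a direct computation on basis elements turns the remaining defining relations into exactly equations \eqref{equation.glx.glxga.gl.glx}, \eqref{equation.gAxgL.gLxgA} and \eqref{equation.gAxgLx.gLxgAx}. (iii) From the definition $P_X=\prod_{x\in X}\epsilon_x\prod_{y\in G\smallsetminus X}(1_A-\epsilon_y)$ and (i) one obtains $\epsilon_{g^m}P_X=P_X$ when $g^m\in X$ and $\epsilon_{g^m}P_X=0$ when $g^m\notin X$.

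Then I would multiply each of \eqref{equation.glx.glxga.gl.glx}, \eqref{equation.gAxgL.gLxgA} and \eqref{equation.gAxgLx.gLxgAx} by each $P_X$ and simplify using (iii) and the centrality of the $P_X$ and of the $\epsilon_{g^m}$. To shorten the case analysis I would carry this out explicitly only for $X\in\{\{1\},\{1,g\},\{1,g^2\},\{1,g,g^2\}\}$, and then obtain the equalities for $\{1,g^3\}$, $\{1,g,g^3\}$ and $\{1,g^2,g^3\}$ by transporting the ones for $\{1,g\}$ and $\{1,g,g^2\}$ along the multiplicative bijections $g^j\cdot(-):P_XA\to P_{g^jX}A$, which are available whenever $g^{-j}\in X$ because $g^j\cdot P_X=P_{g^jX}$ and $g^j\cdot(ab)=(g^j\cdot a)(g^j\cdot b)$ for the grouplike $g^j$: concretely $g^3\cdot(-)$ sends the $\{1,g\}$-relations to the $\{1,g^3\}$-relations and the $\{1,g,g^2\}$-relations to the $\{1,g,g^3\}$-relations, while $g^2\cdot(-)$ sends the $\{1,g,g^2\}$-relations to the $\{1,g^2,g^3\}$-relations. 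Finally, for $X=G$ one has $P_G=\Gamma_G=\prod_{g\in G}\epsilon_g$, so (recalling $\epsilon_h=h\cdot 1_A$) Lemma~\ref{lema.hGammaG.epshGammaG}, equivalently Theorem~\ref{teorema.thetaH.multiplicative.section.pH}, gives $P_G\,\epsilon_{g^lx}=\epsilon(g^lx)\,\Gamma_G=0$ and $P_G\,\epsilon_{g^l}=\epsilon(g^l)\,\Gamma_G=P_G$, which is the last box.

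The verification is long but entirely routine, and I do not expect a genuine obstacle: the only delicate points are keeping track of the signs $(-1)^m$ and of the indices modulo $4$ when rewriting \eqref{equation.glx.glxga.gl.glx}--\eqref{equation.gAxgLx.gLxgAx} in reduced form, and remembering that $X=G$ must be handled through $\Gamma_G$ rather than through those three relations --- for $X=G$, just as for $X=\{1,g^2\}$, equations \eqref{equation.glx.glxga.gl.glx}--\eqref{equation.gAxgLx.gLxgAx} largely collapse to tautologies, so the real content for those two sets comes respectively from $\Gamma_G$ and from the sign pattern forced on $(\epsilon_{g^{l+1}}-\epsilon_{g^l})P_{\{1,g^2\}}$ by (iii).
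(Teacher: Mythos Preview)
Your proposal is correct and follows essentially the same route as the paper: derive the three auxiliary identities \eqref{equation.glx.glxga.gl.glx}, \eqref{equation.gAxgL.gLxgA}, \eqref{equation.gAxgLx.gLxgAx} from the defining relations of $\apar$, multiply by the $P_X$, treat $\{1\},\{1,g\},\{1,g^2\},\{1,g,g^2\}$ by hand, transport to $\{1,g^3\},\{1,g,g^3\},\{1,g^2,g^3\}$ via the grouplike maps $g^j\cdot(-)$, and settle $X=G$ through $\Gamma_G$. The only small point to tighten is the passage $P_G\,\epsilon_h=\epsilon(h)\,P_G$: this uses not just Lemma~\ref{lema.hGammaG.epshGammaG} but also the identity $\epsilon_h\Gamma_X=h\cdot\Gamma_X$ from Proposition~\ref{proposicao.Gamma.central.idempotent} (or Lemma~\ref{lema.hGammaA.h1GammaA}), so you should cite that as well.
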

    By the last proposition, we have the following isomorphisms: 
    $$ (P_{\{1\}}A) \simeq (P_{\{1,g\}}A) \simeq (P_{\{1,g^3\}}A) \simeq (P_{\{1,g,g^2\}}A) \simeq (P_{\{1,g,g^3\}}A) \simeq (P_{\{1,g^2,g^3\}}A) \simeq \K[X]/\langle X^2\rangle,$$
    $$ (P_{\{1,g^2\}}A) \simeq \K[X], \ \ \textrm{and} \ \ (P_{\{1,g,g^2,g^3\}})A \simeq \K. $$
    In order to be clearer, we list below a $\K$-basis for each direct summand of $A$:
    \begin{itemize}
        \item[i)] $\{P_{\{1\}}; \ P_{\{1\}}\epsilon_{x} \}$ is a basis of $P_{\{1\}}A$.

        \item[ii)] $\{P_{\{1,g\}}; \ P_{\{1,g\}}\epsilon_{gx} \}$ is a basis of $P_{\{1,g\}}A$.

        \item[iii)] $\{P_{\{1,g^3\}}; \ P_{\{1,g^3\}}\epsilon_{x} \}$ is a basis of $P_{\{1,g^3\}}A$.

        \item[iv)] $\{P_{\{1,g^2\}}(\epsilon_x)^n\}_{n\geq 0}$ is a basis of $P_{\{1,g^2\}}A$.

        \item[v)] $\{P_{\{1,g, g^2\}}; \ P_{\{1,g,g^2\}}\epsilon_{g^2x} \}$ is a basis of $P_{\{1,g,g^2\}}A$.

        \item[vi)] $\{P_{\{1,g,g^3\}}; \ P_{\{1,g,g^3\}}\epsilon_{gx} \}$ is a basis of $P_{\{1,g,g^3\}}A$.

        \item[vii)] $\{P_{\{1,g^2,g^3\}}; \ P_{\{1,g^2,g^3\}}\epsilon_{x} \}$ is a basis of $P_{\{1,g^2,g^3\}}A$.

        \item[viii)] $\{P_{\{1,g,g^2g^3\}}\}$ is a basis of $P_{\{1,g,g^2,g^3\}}A$.
    \end{itemize}

    Our next aim is to find a basis for each component $\Gamma_X\Hpar$ of $\Hpar$.  As we have already mentioned, there is an isomorphism of vector spaces $\Gamma_{X}\Hpar \simeq \underline{\Gamma_XA\# H}$ for each $X \in \mathcal{P}_1(G)$. In the present case, the idempotents are given by
    $$ \Gamma_{\{1\}} = P_{\{1\}}; \ \ \ \Gamma_{\{1,g\}} = P_{\{1,g\}} + P_{\{1,g^3\}}; \ \ \ \Gamma_{\{1,g^2\}} = P_{\{1,g^2\}}; $$
    $$ \Gamma_{\{1,g,g^2\}} = P_{\{1,g,g^2\}} + P_{\{1,g,g^3\}} + P_{\{1,g^2,g^3\}}; \ \ \ \Gamma_{\{1,g,g^2,g^3\}} = P_{\{1,g,g^2,g^3\}}.  $$
    From the previous results we have a basis for  each ideal $\Gamma_XA$ of $A$:
    \begin{itemize}
        \item[i)] $\{P_{\{1\}}; \ P_{\{1\}}\epsilon_{x} \}$ is a base of $\Gamma_{\{1\}}A = P_{\{1\}}A$.

        \item[ii)] $\{P_{\{1,g\}}; \ P_{\{1,g\}}\epsilon_{gx}; \ P_{\{1,g^3\}}; \ P_{\{1,g^3\}}\epsilon_{x} \}$ is a base of $\Gamma_{\{1,g\}}A = P_{\{1,g\}}A\oplus P_{\{1,g^3\}}A$.

        \item[iii)] $\{P_{\{1,g^2\}}(\epsilon_x)^n\}_{n\geq 0}$ is a base of $\Gamma_{\{1,g^2\}}A = P_{\{1,g^2\}}A$.

        \item[iv)] $\{P_{\{1,g, g^2\}}; \ P_{\{1,g,g^2\}}\epsilon_{g^2x}; \ P_{\{1,g,g^3\}}; \ P_{\{1,g,g^3\}}\epsilon_{gx}; \ P_{\{1,g^2,g^3\}}; \ P_{\{1,g^2, g^3\}}\epsilon_{x} \}$ is a base of $\Gamma_{\{1,g,g^2\}}A = P_{\{1,g,g^2\}}A\oplus P_{\{1,g,g^3\}}A\oplus P_{\{1,g^2,g^3\}}A$.

        \item[v)] $\{P_{\{1,g,g^2g^3\}}\}$ is a base of $\Gamma_{\{1,g,g^2,g^3\}}A = P_{\{1,g,g^2,g^3\}}A$.
    \end{itemize}

    We know that $H_{par} \cong \underline{A\# H}$ by \cite[Thm 4.8]{alves2015partial}. Now we are going to take a basis $\beta_1$  of $\Gamma_XA$ and the basis $\beta_2=\{1,g,g^2,g^3,x,gx,g^2x,g^3x\}$ of $H$; it follows that $\{a\# h \mid a \in \beta_1 \textrm{ and } h \in \beta_2\}$ is a generating set for $\underline{\Gamma_XA\# H} \simeq \Gamma_X\Hpar$ as a vector space. Since
$$ a\# h = a\bigl(h_1\cdot 1_A\bigr)\x h_2, $$
we have, in our case: $a\# h = a\epsilon_{h_1}\x h_2$ for all $a \in \apar$ and $h \in H$. Let us begin by computing a basis for $\Gamma_{\{1\}}\Hpar$. We compute that:
$$
\left\{\begin{array}{ccl}
    P_{\{1\}}\# 1 & = & P_{\{1\}}\x 1, \\
    P_{\{1\}}\# g & = & P_{\{1\}}\epsilon_{g}\x g \ = \ 0, \\
    P_{\{1\}}\# g^2 & = & P_{\{1\}}\epsilon_{g^2}\x g^2 \ = \ 0, \\
    P_{\{1\}}\# g^3 & = & P_{\{1\}}\epsilon_{g^3}\x g^3 \ = \ 0, \\
\end{array}
\right.
\left\{\begin{array}{ccl}
    P_{\{1\}}\epsilon_{x}\# 1 & = & P_{\{1\}}\epsilon_{x}\x 1, \\
    P_{\{1\}}\epsilon_{x}\# g & = & P_{\{1\}}\epsilon_{x}\epsilon_{g}\x g \ = \ 0, \\
    P_{\{1\}}\epsilon_{x}\# g^2 & = & P_{\{1\}}\epsilon_{x}\epsilon_{g^2}\x g^2 \ = \ 0, \\
    P_{\{1\}}\epsilon_{x}\# g^3 & = & P_{\{1\}}\epsilon_{x}\epsilon_{g^3}\x g^3 \ = \ 0,
\end{array}\right.$$
we also have:
$$ 
\left\{\begin{array}{ccl}
    P_{\{1\}}\# x & = & P_{\{1\}}\epsilon_{x}\x g + P_{\{1\}}\x x, \\
    P_{\{1\}}\# gx & = & P_{\{1\}}\epsilon_{gx}\x g^2 + P_{\{1\}}\epsilon_{g}\x gx \ = \ 0, \\
    P_{\{1\}}\# g^2x & = & P_{\{1\}}\epsilon_{g^2x}\x g^3 + P_{\{1\}}\epsilon_{g^2}\x g^2x \ = \ 0, \\
    P_{\{1\}}\# g^3x & = & P_{\{1\}}\epsilon_{g^3x}\x 1 + P_{\{1\}}\epsilon_{g^3}\x g^3x \ = \ P_{\{1\}}\epsilon_{x}\x 1 \ = \ P_{\{1\}}\epsilon_{x}\# 1, \\
\end{array}\right.
$$
finally:
$$ 
\left\{\begin{array}{ccl}
    P_{\{1\}}\epsilon_{x}\# x & = & P_{\{1\}}(\epsilon_{x})^2\x g + P_{\{1\}}\epsilon_{x}\x x \ = \ P_{\{1\}}\epsilon_{x}\x x, \\
    P_{\{1\}}\epsilon_{x}\# gx & = & P_{\{1\}}\epsilon_{x}\epsilon_{gx}\x g^2 + P_{\{1\}}\epsilon_{x}\epsilon_{g}\x gx \ = \ 0, \\
    P_{\{1\}}\epsilon_{x}\# g^2x & = & P_{\{1\}}\epsilon_{x}\epsilon_{g^2x}\x g^3 + P_{\{1\}}\epsilon_{x}\epsilon_{g^2}\x g^2x \ = \ 0, \\
    P_{\{1\}}\epsilon_{x}\# g^3x & = & P_{\{1\}}\epsilon_{x}\epsilon_{g^3x}\x 1 + P_{\{1\}}\epsilon_{x}\epsilon_{g^3}\x g^3x \ = \ P_{\{1\}}(\epsilon_{x})^2\x 1 \ = \ 0,
\end{array}\right.
$$
hence it follows that  $\{P_{\{1\}}\# 1; \ P_{\{1\}}\epsilon_x\# 1; \ P_{\{1\}}\# x; \ P_{\{1\}}\epsilon_x\# x \}$ is a generating set of $\underline{\Gamma_{\{1\}}A\# H} = \underline{P_{\{1\}}A\# H}$. Note also that 
$$ \left\{\begin{array}{ccl}
   P_{\{1\}}\# 1 & = & P_{\{1\}}\x 1 \\
   P_{\{1\}}\epsilon_x\# 1 & = & P_{\{x\}}\eps_x\x 1, \\ 
   P_{\{1\}}\# x & = & P_{\{x\}}\eps_x\x g + P_{\{x\}}\x x, \\ 
   P_{\{1\}}\epsilon_x\# x & = & P_{\{1\}}\epsilon_x\x x,
\end{array}
\right. $$
where on the right side of the equalities, we have elements that are linearly independents on $A\x H$, hence $\{P_{\{1\}}\# 1; \ P_{\{1\}}\epsilon_x\# 1; \ P_{\{1\}}\# x; \ P_{\{1\}}\epsilon_x\# x \}$ is linearly independent, and therefore is a basis of $\underline{\Gamma_{\{1\}}A\# H} = \underline{P_{\{1\}}A\# H}$. Since
$$ \begin{array}{ccc}
    \underline{\Gamma_{\{1\}}A\# H} & \larr & \Gamma_{\{1\}}\Hpar  \\
    a\# h & \lmap & a[h],
\end{array} $$
is an isomorphism, we obtain a basis $\{P_{\{1\}}; \ P_{\{1\}}\epsilon_x; \ P_{\{1\}}[x]; \ P_{\{1\}}\epsilon_x[x] \}$ of $\Gamma_{\{1\}}\Hpar$.

We now essentially apply the same construction for each $P_{X}$, with $X \in \mathcal{P}_1(G)$, and obtain a basis for the respective component $\underline{\Gamma_XA\#H} \cong \Gamma_X\Hpar$.
\begin{itemize}
    \item[i)] For $\Gamma_{\{1,g\}} = P_{\{1,g\}} + P_{\{1,g^3\}}$, we compute: 
    $$
\left\{\begin{array}{ccl}
    P_{\{1,g\}}\# 1 & = & P_{\{1,g\}}\x 1, \\
    P_{\{1,g\}}\# g & = & P_{\{1,g\}}\x g, \\
    P_{\{1,g\}}\# g^2 & = & 0, \\
    P_{\{1,g\}}\# g^3 & = & 0, \\
\end{array}
\right.
\left\{\begin{array}{ccl}
    P_{\{1,g\}}\epsilon_{gx}\# 1 & = & P_{\{1,g\}}\epsilon_{gx}\x 1, \\
    P_{\{1,g\}}\epsilon_{gx}\# g & = & P_{\{1,g\}}\epsilon_{gx}\x g, \\
    P_{\{1,g\}}\epsilon_{gx}\# g^2 & = & 0, \\
    P_{\{1,g\}}\epsilon_{gx}\# g^3 & = & 0,
\end{array}\right.$$
$$ 
\left\{\begin{array}{ccl}
    P_{\{1,g\}}\# x & = & P_{\{1,g\}}\epsilon_{x}\x g + P_{\{1,g\}}\x x \ = \ P_{\{1,g\}}\x x , \\
    P_{\{1,g\}}\# gx & = & P_{\{1,g\}}\epsilon_{gx}\x g^2 + P_{\{1,g\}}\epsilon_{g}\x gx \ = \ P_{\{1,g\}}\epsilon_{gx}\x g^2 + P_{\{1,g\}}\x gx, \\
    P_{\{1,g\}}\# g^2x & = & P_{\{1,g\}}\epsilon_{g^2x}\x g^3 + P_{\{1,g\}}\epsilon_{g^2}\x g^2x \ = \ 0, \\
    P_{\{1,g\}}\# g^3x & = & P_{\{1,g\}}\epsilon_{g^3x}\x 1 + P_{\{1,g\}}\epsilon_{g^3}\x g^3x \ = \ -P_{\{1,g\}}\epsilon_{gx}\x 1 \ = \ -P_{\{1,g\}}\epsilon_{gx}\# 1, \\
\end{array}\right.
$$
$$ 
\left\{\begin{array}{ccl}
    P_{\{1,g\}}\epsilon_{gx}\# x & = & P_{\{1,g\}}\epsilon_{gx}\epsilon_{x}\x g + P_{\{1,g\}}\epsilon_{gx}\x x \ = \ P_{\{1,g\}}\epsilon_{gx}\x x, \\
    P_{\{1,g\}}\epsilon_{gx}\# gx & = & P_{\{1,g\}}(\epsilon_{gx})^2\x g^2 + P_{\{1,g\}}\epsilon_{gx}\epsilon_{g}\x gx \ = \ P_{\{1,g\}}\epsilon_{gx}\x gx, \\
    P_{\{1,g\}}\epsilon_{gx}\# g^2x & = & P_{\{1,g\}}\epsilon_{gx}\epsilon_{g^2x}\x g^3 + P_{\{1,g\}}\epsilon_{gx}\epsilon_{g^2}\x g^2x \ = \ 0, \\
    P_{\{1,g\}}\epsilon_{gx}\# g^3x & = & P_{\{1,g\}}\epsilon_{gx}\epsilon_{g^3x}\x 1 + P_{\{1,g\}}\epsilon_{gx}\epsilon_{g^3}\x g^3x \ = \ -P_{\{1,g\}}(\epsilon_{gx})^2\x 1 \ = \ 0,
\end{array}\right.
$$
and now for $P_{\{1,g^3\}}$,
$$
\left\{\begin{array}{ccl}
    P_{\{1,g^3\}}\# 1 & = & P_{\{1,g^3\}}\x 1, \\
    P_{\{1,g^3\}}\# g & = & 0, \\
    P_{\{1,g^3\}}\# g^2 & = & 0, \\
    P_{\{1,g^3\}}\# g^3 & = & P_{\{1,g^3\}}\x g^3, \\
\end{array}
\right.
\left\{\begin{array}{ccl}
    P_{\{1,g^3\}}\epsilon_{x}\# 1 & = & P_{\{1,g^3\}}\epsilon_{x}\x 1, \\
    P_{\{1,g^3\}}\epsilon_{x}\# g & = & 0, \\
    P_{\{1,g^3\}}\epsilon_{x}\# g^2 & = & 0, \\
    P_{\{1,g^3\}}\epsilon_{x}\# g^3 & = & P_{\{1,g^3\}}\epsilon_{x}\x g^3,
\end{array}\right.$$
$$ 
\left\{\begin{array}{ccl}
    P_{\{1,g^3\}}\# x & = & P_{\{1,g^3\}}\epsilon_{x}\x g + P_{\{1,g^3\}}\x x, \\
    P_{\{1,g^3\}}\# gx & = & P_{\{1,g^3\}}\epsilon_{gx}\x g^2 + P_{\{1,g^3\}}\epsilon_{g}\x gx \ = \ 0, \\
    P_{\{1,g^3\}}\# g^2x & = & P_{\{1,g^3\}}\epsilon_{g^2x}\x g^3 + P_{\{1,g^3\}}\epsilon_{g^2}\x g^2x \ = \ -P_{\{1,g^3\}}\epsilon_{x}\x g^3, \\
    P_{\{1,g^3\}}\# g^3x & = & P_{\{1,g^3\}}\epsilon_{g^3x}\x 1 + P_{\{1,g^3\}}\epsilon_{g^3}\x g^3x \ = \ P_{\{1,g^3\}}\x g^3x, \\
\end{array}\right.
$$
$$ 
\left\{\begin{array}{ccl}
    P_{\{1,g^3\}}\epsilon_{x}\# x & = & P_{\{1,g^3\}}(\epsilon_{x})^2\x g + P_{\{1,g^3\}}\epsilon_{x}\x x \ = \ P_{\{1,g^3\}}\epsilon_{x}\x x, \\
    P_{\{1,g^3\}}\epsilon_{x}\# gx & = & P_{\{1,g^3\}}\epsilon_{x}\epsilon_{gx}\x g^2 + P_{\{1,g^3\}}\epsilon_{x}\epsilon_{g}\x gx \ = \ 0, \\
    P_{\{1,g^3\}}\epsilon_{x}\# g^2x & = & P_{\{1,g^3\}}\epsilon_{x}\epsilon_{g^2x}\x g^3 + P_{\{1,g^3\}}\epsilon_{x}\epsilon_{g^2}\x g^2x \ = \ 0, \\
    P_{\{1,g^3\}}\epsilon_{x}\# g^3x & = & P_{\{1,g^3\}}\epsilon_{x}\epsilon_{g^3x}\x 1 + P_{\{1,g^3\}}\epsilon_{x}\epsilon_{g^3}\x g^3x \ = \ P_{\{1,g^3\}}\epsilon_{x}\x g^3x.
\end{array}\right.
$$
Hence, 
$$\begin{array}{l}
    \Bigl\{P_{\{1,g\}}\# 1; P_{\{1,g\}}\# g; P_{\{1,g\}}\epsilon_{gx}\# 1; P_{\{1,g\}}\epsilon_{gx}\# g; \\
    P_{\{1,g\}}\# x; P_{\{1,g\}}\# gx; P_{\{1,g\}}\epsilon_{gx}\# x; P_{\{1,g\}}\epsilon_{gx}\# gx; \\
    P_{\{1,g^3\}}\# 1; P_{\{1,g^3\}}\# g^3; P_{\{1,g^3\}}\epsilon_{x}\# 1; P_{\{1,g^3\}}\epsilon_{x}\# g^3; \\
    P_{\{1,g^3\}}\# x; P_{\{1,g^3\}}\# g^3x; P_{\{1,g^3\}}\epsilon_{x}\# x; P_{\{1,g^3\}}\epsilon_{x}\# g^3x \Bigr\} \\
\end{array}
$$

is a basis  of $\underline{\Gamma_{\{1,g\}}A\# H}$, and 

$$\hspace{-0.7cm}\begin{array}{l}
    \Bigl\{P_{\{1,g\}}; P_{\{1,g\}}[g]; P_{\{1,g\}}\epsilon_{gx}; P_{\{1,g\}}\epsilon_{gx}[g]; P_{\{1,g\}}[x]; P_{\{1,g\}}[gx]; P_{\{1,g\}}\epsilon_{gx}[x]; P_{\{1,g\}}\epsilon_{gx}[gx];  \\
    P_{\{1,g^3\}}; P_{\{1,g^3\}}[g^3]; P_{\{1,g^3\}}\epsilon_{x}; P_{\{1,g^3\}}\epsilon_{x}[g^3]; P_{\{1,g^3\}}[x]; P_{\{1,g^3\}}[g^3x]; P_{\{1,g^3\}}\epsilon_{x}[x]; P_{\{1,g^3\}}\epsilon_{x}[g^3x]\Bigr\}
\end{array}$$
is a basis of $\Gamma_{\{1,g\}}\Hpar$.

\item[ii)] For $\Gamma_{\{1,g^2\}} = P_{\{1,g^2\}}$

$$ P_{\{1,g^2\}}(\epsilon_{x})^n\# g^l = \bigl(\delta_{l,0}+\delta_{l,2}\bigr)P_{\{1,g^2\}}(\epsilon_{x})^n\x g^l, $$
$$ 
\left\{\begin{array}{ccl}
    P_{\{1,g^2\}}\# x & = & P_{\{1,g^2\}}\epsilon_{x}\x g + P_{\{1,g^2\}}\x x, \\
    P_{\{1,g^2\}}\# gx & = & P_{\{1,g^2\}}\epsilon_{gx}\x g^2 + P_{\{1,g^2\}}\epsilon_{g}\x gx \ = \ P_{\{1,g^2\}}\epsilon_{x}\x g^2, \\
    P_{\{1,g^2\}}\# g^2x & = & P_{\{1,g^2\}}\epsilon_{g^2x}\x g^3 + P_{\{1,g^2\}}\epsilon_{g^2}\x g^2x \ = \ P_{\{1,g^2\}}\epsilon_{x}\x g^3 + P_{\{1,g^2\}}\x g^2x, \\
    P_{\{1,g^2\}}\# g^3x & = & P_{\{1,g^2\}}\epsilon_{g^3x}\x 1 + P_{\{1,g^2\}}\epsilon_{g^3}\x g^3x \ = \ P_{\{1,g^2\}}\epsilon_{x}\x 1, \\
\end{array}\right.
$$
$$ 
\left\{\begin{array}{ccl}
    P_{\{1,g^2\}}(\epsilon_{x})^n\# x & = & P_{\{1,g^2\}}(\epsilon_{x})^{n+1}\x g + P_{\{1,g^2\}}(\epsilon_{x})^n\x x, \\
    P_{\{1,g^2\}}(\epsilon_{x})^n\# gx & = & P_{\{1,g^2\}}(\epsilon_{x})^{n+1}\x g^2, \\
    P_{\{1,g^2\}}(\epsilon_{x})^n\# g^2x & = & P_{\{1,g^2\}}(\epsilon_{x})^{n+1}\x g^3 + P_{\{1,g^2\}}(\epsilon_{x})^n\x g^2x, \\
    P_{\{1,g^2\}}(\epsilon_{x})^n\# g^3x & = & P_{\{1,g^2\}}(\epsilon_{x})^{n+1}\x 1.
\end{array}\right.
$$
Hence,
$$\bigl\{P_{\{1,g^2\}}(\epsilon_x)^{n_1}\# 1; \ P_{\{1,g^2\}}(\epsilon_x)^{n_2}\# g^2; \ P_{\{1,g^2\}}(\epsilon_x)^{n_3}\# x; \ P_{\{1,g^2\}}(\epsilon_x)^{n_4}\# g^2x \bigr\}_{n_1,n_2,n_3,n_4 \geq 0}$$
is a base of $\underline{\Gamma_{\{1,g^2\}}A\# H}$, and
$$\bigl\{P_{\{1,g^2\}}(\epsilon_x)^{n_1}; \ P_{\{1,g^2\}}(\epsilon_x)^{n_2}[g^2]; \ P_{\{1,g^2\}}(\epsilon_x)^{n_3}[x]; \ P_{\{1,g^2\}}(\epsilon_x)^{n_4}[g^2x] \bigr\}_{n_1,n_2,n_3,n_4\geq 0}$$
is a base of $\Gamma_{\{1,g^2\}}\Hpar$.

\item[iii)] For $\Gamma_{\{1,g,g^2\}} = P_{\{1,g,g^2\}} + P_{\{1,g,g^3\}} + P_{\{1,g^2,g^3\}}$,

$$
\left\{\begin{array}{ccl}
    P_{\{1,g,g^2\}}\# 1 & = & P_{\{1,g,g^2\}}\x 1, \\
    P_{\{1,g,g^2\}}\# g & = & P_{\{1,g,g^2\}}\x g, \\
    P_{\{1,g,g^2\}}\# g^2 & = & P_{\{1,g,g^2\}}\x g^2, \\
    P_{\{1,g,g^2\}}\# g^3 & = & 0, \\
\end{array}
\right.
\left\{\begin{array}{ccl}
    P_{\{1,g,g^2\}}\epsilon_{g^2x}\# 1 & = & P_{\{1,g,g^2\}}\epsilon_{g^2x}\x 1, \\
    P_{\{1,g,g^2\}}\epsilon_{g^2x}\# g & = & P_{\{1,g,g^2\}}\epsilon_{g^2x}\x g, \\
    P_{\{1,g,g^2\}}\epsilon_{g^2x}\# g^2 & = & P_{\{1,g,g^2\}}\epsilon_{g^2x}\x g^2, \\
    P_{\{1,g,g^2\}}\epsilon_{g^2x}\# g^3 & = & 0,
\end{array}\right.$$
$$ 
\left\{\begin{array}{ccl}
    P_{\{1,g,g^2\}}\# x & = & P_{\{1,g,g^2\}}\x x , \\
    P_{\{1,g,g^2\}}\# gx & = & P_{\{1,g,g^2\}}\x gx, \\
    P_{\{1,g,g^2\}}\# g^2x & = & P_{\{1,g,g^2\}}\epsilon_{g^2x}\x g^3 + P_{\{1,g,g^2\}}\x g^2x , \\
    P_{\{1,g,g^2\}}\# g^3x & = & P_{\{1,g,g^2\}}\epsilon_{g^2x}\x 1, \\
\end{array}\right.
$$
$$ 
\left\{\begin{array}{ccl}
    P_{\{1,g,g^2\}}\epsilon_{g^2x}\# x & = & P_{\{1,g,g^2\}}\epsilon_{g^2x}\x x , \\
    P_{\{1,g,g^2\}}\epsilon_{g^2x}\# gx & = & P_{\{1,g,g^2\}}\epsilon_{g^2x}\x gx, \\
    P_{\{1,g,g^2\}}\epsilon_{g^2x}\# g^2x & = & P_{\{1,g,g^2\}}\epsilon_{g^2x}\x g^2x , \\
    P_{\{1,g,g^2\}}\epsilon_{g^2x}\# g^3x & = & 0, \\
\end{array}\right.
$$
for $P_{\{1,g,g^3\}}$
$$
\left\{\begin{array}{ccl}
    P_{\{1,g,g^3\}}\# 1 & = & P_{\{1,g,g^3\}}\x 1, \\
    P_{\{1,g,g^3\}}\# g & = & P_{\{1,g,g^3\}}\x g, \\
    P_{\{1,g,g^3\}}\# g^2 & = & 0, \\
    P_{\{1,g,g^3\}}\# g^3 & = & P_{\{1,g,g^3\}}\x g^3, \\
\end{array}
\right.
\left\{\begin{array}{ccl}
    P_{\{1,g,g^3\}}\epsilon_{gx}\# 1 & = & P_{\{1,g,g^3\}}\epsilon_{gx}\x 1, \\
    P_{\{1,g,g^3\}}\epsilon_{gx}\# g & = & P_{\{1,g,g^3\}}\epsilon_{gx}\x g, \\
    P_{\{1,g,g^3\}}\epsilon_{gx}\# g^2 & = & 0, \\
    P_{\{1,g,g^3\}}\epsilon_{gx}\# g^3 & = & P_{\{1,g,g^3\}}\epsilon_{gx}\x g^3,
\end{array}\right.$$
$$ 
\left\{\begin{array}{ccl}
    P_{\{1,g,g^3\}}\# x & = & P_{\{1,g,g^3\}}\x x , \\
    P_{\{1,g,g^3\}}\# gx & = & P_{\{1,g,g^3\}}\epsilon_{gx}\x g^2 + P_{\{1,g,g^2\}}\x gx, \\
    P_{\{1,g,g^3\}}\# g^2x & = & P_{\{1,g,g^3\}}\epsilon_{gx}\x g^3, \\
    P_{\{1,g,g^3\}}\# g^3x & = & P_{\{1,g,g^3\}}\x g^3x, \\
\end{array}\right.
$$
$$ 
\left\{\begin{array}{ccl}
    P_{\{1,g,g^3\}}\epsilon_{gx}\# x & = & P_{\{1,g,g^3\}}\epsilon_{gx}\x x , \\
    P_{\{1,g,g^3\}}\epsilon_{gx}\# gx & = & P_{\{1,g,g^2\}}\epsilon_{gx}\x gx, \\
    P_{\{1,g,g^3\}}\epsilon_{gx}\# g^2x & = & 0, \\
    P_{\{1,g,g^3\}}\epsilon_{gx}\# g^3x & = & P_{\{1,g,g^3\}}\epsilon_{gx}\x g^3x, \\
\end{array}\right.
$$
and for $P_{\{1,g^2,g^3\}}$
$$
\left\{\begin{array}{ccl}
    P_{\{1,g^2,g^3\}}\# 1 & = & P_{\{1,g^2,g^3\}}\x 1, \\
    P_{\{1,g^2,g^3\}}\# g & = & 0, \\
    P_{\{1,g^2,g^3\}}\# g^2 & = & P_{\{1,g^2,g^3\}}\x g^2, \\
    P_{\{1,g^2,g^3\}}\# g^3 & = & P_{\{1,g^2,g^3\}}\x g^3, \\
\end{array}
\right.
\left\{\begin{array}{ccl}
    P_{\{1,g^2,g^3\}}\epsilon_{x}\# 1 & = & P_{\{1,g^2,g^3\}}\epsilon_{x}\x 1, \\
    P_{\{1,g^2,g^3\}}\epsilon_{x}\# g & = & 0, \\
    P_{\{1,g^2,g^3\}}\epsilon_{x}\# g^2 & = & P_{\{1,g^2,g^3\}}\epsilon_{x}\x g^2, \\
    P_{\{1,g^2,g^3\}}\epsilon_{x}\# g^3 & = & P_{\{1,g^2,g^3\}}\epsilon_{x}\x g^3, \\
\end{array}
\right.$$
$$ 
\left\{\begin{array}{ccl}
    P_{\{1,g^2,g^3\}}\# x & = & P_{\{1,g^2,g^3\}}\epsilon_{x}\x g +P_{\{1,g^2,g^3\}}\x x, \\
    P_{\{1,g^2,g^3\}}\# gx & = & P_{\{1,g^2,g^3\}}\epsilon_{x}\x g^2, \\
    P_{\{1,g^2,g^3\}}\# g^2x & = & P_{\{1,g^2,g^3\}}\x g^2x, \\
    P_{\{1,g^2,g^3\}}\# g^3x & = & P_{\{1,g^2,g^3\}}\x g^3x, \\
\end{array}\right.
$$
$$ 
\left\{\begin{array}{ccl}
    P_{\{1,g^2,g^3\}}\epsilon_{x}\# x & = & P_{\{1,g^2,g^3\}}\epsilon_{x}\x x, \\
    P_{\{1,g^2,g^3\}}\epsilon_{x}\# gx & = & 0, \\
    P_{\{1,g^2,g^3\}}\epsilon_{x}\# g^2x & = & P_{\{1,g^2,g^3\}}\epsilon_{x}\x g^2x, \\
    P_{\{1,g^2,g^3\}}\epsilon_{x}\# g^3x & = & P_{\{1,g^2,g^3\}}\epsilon_{x}\x g^3x. \\
\end{array}\right.
$$
Finally,
$$ \hspace{-2.5cm}\begin{array}{l}
    \Bigl\{P_{\{1,g,g^2\}}\# 1; \ P_{\{1,g,g^2\}}\# g; \ P_{\{1,g,g^2\}}\# g^2; \ P_{\{1,g,g^2\}}\epsilon_{g^2x}\# 1; 
    P_{\{1,g,g^2\}}\epsilon_{g^2x}\# g; \ P_{\{1,g,g^2\}}\epsilon_{g^2x}\# g^2; \\
    P_{\{1,g,g^2\}}\# x; \ P_{\{1,g,g^2\}}\# gx; \ P_{\{1,g,g^2\}}\# g^2x; \ P_{\{1,g,g^2\}}\epsilon_{g^2x}\# x; \ P_{\{1,g,g^2\}}\epsilon_{g^2x}\# gx; \ P_{\{1,g,g^2\}}\epsilon_{g^2x}\# g^2x; \\
    P_{\{1,g,g^3\}}\# 1; \ P_{\{1,g,g^3\}}\# g; \ P_{\{1,g,g^3\}}\# g^3; \ P_{\{1,g,g^3\}}\epsilon_{gx}\# 1; 
    P_{\{1,g,g^3\}}\epsilon_{gx}\# g; \ P_{\{1,g,g^3\}}\epsilon_{gx}\# g^3; \\
    P_{\{1,g,g^3\}}\# x; \ P_{\{1,g,g^3\}}\# gx; \ P_{\{1,g,g^2\}}\# g^3x; \ P_{\{1,g,g^2\}}\epsilon_{gx}\# x; \ P_{\{1,g,g^3\}}\epsilon_{gx}\# gx; \ P_{\{1,g,g^3\}}\epsilon_{gx}\# g^3x; \\
    P_{\{1,g^2,g^3\}}\# 1; \ P_{\{1,g^2,g^3\}}\# g^2; \ P_{\{1,g,g^3\}}\# g^3; \ P_{\{1,g^2,g^3\}}\epsilon_{x}\# 1; 
    P_{\{1,g^2,g^3\}}\epsilon_{x}\# g^2; \ P_{\{1,g^2,g^3\}}\epsilon_{x}\# g^3; \\
    P_{\{1,g^2,g^3\}}\# x; \ P_{\{1,g^2,g^3\}}\# g^2x; \ P_{\{1,g^2,g^3\}}\# g^3x; \ P_{\{1,g,g^2\}}\epsilon_{x}\# x; \ P_{\{1,g^2,g^3\}}\epsilon_{x}\# g^2x; \ P_{\{1,g^2,g^3\}}\epsilon_{x}\# g^3x \Bigr\}
    \end{array}$$
    is a basis of $\underline{\Gamma_{\{1,g,g^2\}}A\# H}$, and 
    $$ \hspace{-2.5cm}\begin{array}{l}
    \Bigl\{P_{\{1,g,g^2\}}; \ P_{\{1,g,g^2\}}[g]; \ P_{\{1,g,g^2\}}[g^2]; \ P_{\{1,g,g^2\}}\epsilon_{g^2x}; \
    P_{\{1,g,g^2\}}\epsilon_{g^2x}[g]; \ P_{\{1,g,g^2\}}\epsilon_{g^2x}[g^2]; \\
    P_{\{1,g,g^2\}}[x]; \ P_{\{1,g,g^2\}}[gx]; \ P_{\{1,g,g^2\}}[g^2x]; \ P_{\{1,g,g^2\}}\epsilon_{g^2x}[x]; \ P_{\{1,g,g^2\}}\epsilon_{g^2x}[gx]; \ P_{\{1,g,g^2\}}\epsilon_{g^2x}[g^2x]; \\
    P_{\{1,g,g^3\}}; \ P_{\{1,g,g^3\}}[g]; \ P_{\{1,g,g^3\}}[g^3]; \ P_{\{1,g,g^3\}}\epsilon_{gx}; \ P_{\{1,g,g^3\}}\epsilon_{gx}[g]; \ P_{\{1,g,g^3\}}\epsilon_{gx}[g^3]; \\
    P_{\{1,g,g^3\}}[x]; \ P_{\{1,g,g^3\}}[gx]; \ P_{\{1,g,g^2\}}[g^3x]; \ P_{\{1,g,g^2\}}\epsilon_{gx}[x]; \ P_{\{1,g,g^3\}}\epsilon_{gx}[gx]; \ P_{\{1,g,g^3\}}\epsilon_{gx}[g^3x]; \\
    P_{\{1,g^2,g^3\}}; \ P_{\{1,g^2,g^3\}}[g^2]; \ P_{\{1,g,g^3\}}[g^3]; \ P_{\{1,g^2,g^3\}}\epsilon_{x}; P_{\{1,g^2,g^3\}}\epsilon_{x}[g^2]; \ P_{\{1,g^2,g^3\}}\epsilon_{x}[g^3]; \\
    P_{\{1,g^2,g^3\}}[x]; \ P_{\{1,g^2,g^3\}}[g^2x]; \ P_{\{1,g^2,g^3\}}[g^3x]; \ P_{\{1,g,g^2\}}\epsilon_{x}[x]; \ P_{\{1,g^2,g^3\}}\epsilon_{x}[g^2x]; \ P_{\{1,g^2,g^3\}}\epsilon_{x}[g^3x] \Bigr\}
    \end{array}$$
    is a basis of $\Gamma_{\{1,g,g^2\}}\Hpar$.

\item[iv)] For $\Gamma_G = \Gamma_{\{1,g,g^2,g^3\}} = P_{\{1,g,g^2,g^3\}}$ however, for this special central idempotent we already know that $\Gamma_{\{1,g,g^2,g^3\}}\Hpar \cong H$ by Theorem \ref{teorema.thetaH.multiplicative.section.pH}, and also 
$$\{P_G; \ P_G[g]; \ P_G[g^2]; \ P_G[g^3]; \ P_G[x]; \ P_G[gx]; \ P_G[g^2x]; \ P_G[g^3x];\}$$
is a basis of $\Gamma_G\Hpar$.

\end{itemize}
Using all this basis, of each component of $\Hpar$ we are able to describe a basis for $\Hpar$:
$$
\begin{array}{l}
    \Bigl\{ P_{\{1\}}(\epsilon_{x})^\alpha[x^\beta] \mid \alpha,\beta \in \{0,1\} \Bigr\} \\
     \bigcup \Bigl\{P_{\{1,g\}}(\epsilon_{gx})^{\alpha_1}[g^{l_1}x^{\beta_1}]; \ P_{\{1,g^3\}}(\epsilon_x)^{\alpha_2}[g^{l_2}x^{\beta_2}] \mid \alpha_1,\alpha_2,\beta_1,\beta_2\in \{0,1\}; l_1=0,1; l_2=0,3 \Bigr\} \\
    \bigcup \Bigl\{ P_{\{1,g^2\}}(\epsilon_x)^n[g^lx^\beta] \mid n\geq 0; \beta=0,1; l=0,2 \Bigr\}  \\
    \bigcup \Bigl\{ P_{\{1,g,g^2\}}(\epsilon_{g^2x})^{\alpha_1}[g^{l_1}x^{\beta_1}]; \ P_{\{1,g,g^3\}}(\epsilon_{gx})^{\alpha_2}[g^{l_2}x^{\beta_2}]; \ P_{\{1,g^2,g^3\}}(\epsilon_{x})^{\alpha_3}[g^{l_3}x^{\beta_3}] \mid \\
    \hspace{2.5cm} \alpha_1,\alpha_2,\alpha_3,\beta_1,\beta_2,\beta_3 \in \{0,1\};  l_1=0,1,2; l_2=0,1,3; l_3=0,2,3 \Bigr\} \\
    \bigcup \biggl\{P_G; \ P_G[g]; \ P_G[g^2] \ P_G[g^3]; \ P_G[x] \ P_G[gx]; \ P_G[g^2x] \ P_G[g^3x] \biggr\}.

\end{array}
$$

\subsection{A Hopf algebra of rank one: non-nilpotent case \label{section.non-nilpotent}}

    Now consider the Hopf algebra $H$ generated by $g$ and $x$, subject to the following relations:
    $$ g^4=1, \ \ x^2 = g^2-1, \ \ xg = -gx, $$
    and with structural maps given by:
    $$ \Delta(g) = g\x g, \ \ \Delta(x) = x\x g + 1\x x, $$
    $$ S(g) = g^3, \ \ S(x)=gx. $$
    This Hopf algebra is similar to the previous one, studied in Section \ref{section.nilpotent}: they are isomorphic as coalgebras, but not as algebras. Then $\{1,g,g^2,g^3, x, gx, g^2x, g^3x\}$ is a basis of $H$. This is a pointed Hopf algebra of rank one, and the group of grouplike elements is $G=\{1,g,g^2,g^3\} \cong \Z_4$.
The corresponding group datum is $D' = (G, \chi, g, 1)$.

    As mentioned in the previous example, we shall first list the central idempotents of $A$, the algebra generated by $\epsilon_h = [h_1][S(h_2)]$, each of which is associated to one element of $\mathcal{P}_1(G)$,
    $$ P_{\{1\}} = (1-\epsilon_g)(1-\epsilon_{g^2})(1-\epsilon_{g^3}),$$ 
    $$ P_{\{1,g\}} = \epsilon_g(1-\epsilon_{g^2})(1-\epsilon_{g^3}), \ \ P_{\{1,g^2\}} =  \epsilon_{g^{2}}(1-\epsilon_{g})(1-\epsilon_{g^3}), \ \ P_{\{1,g^3\}} = \epsilon_{g^{3}}(1-\epsilon_{g^2})(1-\epsilon_{g^3}), $$
    $$ P_{\{1,g,g^2\}} = \epsilon_{g}\epsilon_{g^2}(1-\epsilon_{g^3}), \ \
    P_{\{1,g,g^3\}} = \epsilon_{g}\epsilon_{g^3}(1-\epsilon_{g^2}), \ \ 
    P_{\{1,g^2,g^3\}} = \epsilon_{g^2}\epsilon_{g^3}(1-\epsilon_{g}), $$
    $$ P_{\{g,g^2,g^3\}} = \epsilon_{g}\epsilon_{g^2}\epsilon_{g^3}. $$
    The generators of the algebra $A$ satisfy equations
    \eqref{equality.Apar.0}, \eqref{equality.Apar.1},
\eqref{equality.Apar.2}.
   
    Since the Hopf algebra of this example and the Hopf algebra of the previous example are isomorphic as coalgebras, the first two equations provide the same information and  Equations (\ref{equation.glx.glxga.gl.glx}), and (\ref{equation.gAxgL.gLxgA}), remain true for this $A$ as well, that is:
    \begin{equation}\label{equation.copia.ZeroUm}
        \epsilon_{g^lx} = \epsilon_{g^lx}\eps_{g^{l+1}}+\eps_{g^l}\eps_{g^lx},
    \end{equation}
    \begin{equation}\label{equation.copia.ZeroDois}
        (-1)^{\alpha+l}\eps_{g^{\alpha}x}\bigl[\eps_{g^{l+1}}-\eps_{g^l}\bigr] = \eps_{g^{l}x}\bigl[\eps_{g^{\alpha+1}}-\eps_{g^\alpha}\bigr].
    \end{equation}
    
    However the expression $\eps_{h_1k}\eps_{h_2} = \eps_{h_1}\eps_{h_2k}$ \eqref{equality.Apar.2}, where $h=g^lx$ and $k=g^mx$, yields the new equation \begin{equation}\label{equation.novidade}
        (-1)^{\alpha+l}(\eps_{g^{\alpha+2}}-\eps_{g^{\alpha}})(\eps_{g^{l+1}}-\eps_{g^l}) + \eps_{g^{\alpha}x}\eps_{g^lx} = \eps_{g^lx}\eps_{g^{\alpha+1}x}.
    \end{equation}
   
   Detailed  computations will be left to the reader, as they are similar to the previous example. In particular, all the tools used there work here as well, with obvious adaptations.
    \begin{proposicao}\label{preposicao.exampleDois.A}
       The algebra $A$ admits $P_{\{1\}}$, $P_{\{1,g\}}$, $P_{\{1,g^2\}}$, $P_{\{1,g^3\}}$, $P_{\{1,g,g^2\}}$, $P_{\{1,g,g^3\}}$, $P_{\{1,g^2,g^3\}}$, and $P_{\{1,g,g^2,g^3\}}$ as mutually orthogonal central idempotents. Furthermore, the sum of those elements is the unit of $A$, and they are subject to the following equations:
        
        $$
    \left\{\begin{array}{rcl}
         P_{\{1\}}\epsilon_{g^l} & = & \delta_{l,0}P_{\{1\}}, \\
         P_{\{1\}}\epsilon_{g^lx} & = & (\delta_{l,0}+\delta_{l,3})P_{\{1\}}\epsilon_{x}, \\
         P_{\{1\}}(\epsilon_x)^2 & = & -P_{\{1\}},
    \end{array}\right.
    \left\{\begin{array}{rcl}
         P_{\{1,g\}}\epsilon_{g^l} & = & (\delta_{l,0}+\delta_{l,1})P_{\{1,g\}}, \\
         P_{\{1,g\}}\epsilon_{g^lx} & = & (\delta_{l,1}-\delta_{l,3})P_{\{1,g\}}\epsilon_{gx}, \\
         P_{\{1,g\}}(\epsilon_{gx})^2 & = & -P_{\{1,g\}},
    \end{array}\right.
    $$
    $$
    \left\{\begin{array}{rcl}
         P_{\{1,g^3\}}\epsilon_{g^l} & = & (\delta_{l,0}+\delta_{l,3})P_{\{1,g^3\}}, \\
         P_{\{1,g^3\}}\epsilon_{g^lx} & = & (\delta_{l,0}-\delta_{l,2})P_{\{1,g^3\}}\epsilon_{x}, \\
         P_{\{1,g^3\}}(\epsilon_{x})^2 & = & -P_{\{1,g^3\}}.
    \end{array}\right.
    \left\{\begin{array}{rcl}
         P_{\{1,g^2\}}\epsilon_{g^l} & = & (\delta_{l,0}+\delta_{l,2})P_{\{1,g^2\}}, \\
         P_{\{1,g^2\}}\epsilon_{g^lx} & = & P_{\{1,g^2\}}\epsilon_{x}, \\
    \end{array}\right.
    $$
    $$
    \hspace{-1.0cm}\left\{\begin{array}{rcl}
        P_{\{1,g,g^2\}}\epsilon_{g^l} & = & (\delta_{l,0}+\delta_{l,1}+\delta_{l,2})P_{\{1,g,g^2\}}, \\
         P_{\{1,g,g^2\}}\epsilon_{g^lx} & = & (\delta_{l,2}+\delta_{l,3})P_{\{1,g,g^2\}}\epsilon_{g^2x}, \\
         P_{\{1,g,g^2\}}(\epsilon_{g^2x})^2 & = & -P_{\{1,g,g^2\}}, 
    \end{array}\right.
    \left\{\begin{array}{rcl}
        P_{\{1,g,g^3\}}\epsilon_{g^l} & = & (\delta_{l,0}+\delta_{l,1}+\delta_{l,3})P_{\{1,g,g^3\}}, \\
         P_{\{1,g,g^3\}}\epsilon_{g^lx} & = & (\delta_{l,1}+\delta_{l,2})P_{\{1,g,g^3\}}\epsilon_{gx}, \\
         P_{\{1,g,g^3\}}(\epsilon_{gx})^2 & = & -P_{\{1,g,g^3\}}, 
    \end{array}\right.
    $$
    $$
    \left\{\begin{array}{rcl}
        P_{\{1,g^2,g^3\}}\epsilon_{g^l} & = & (\delta_{l,0}+\delta_{l,2}+\delta_{l,3})P_{\{1,g^2,g^3\}}, \\
         P_{\{1,g^2,g^3\}}\epsilon_{g^lx} & = & (\delta_{l,0}+\delta_{l,1})P_{\{1,g^2,g^3\}}\epsilon_{x}, \\
         P_{\{1,g^2,g^3\}}(\epsilon_{x})^2 & = & -P_{\{1,g^2,g^3\}}, 
    \end{array}\right.
    \left\{\begin{array}{rcl}
        P_{\{1,g,g^2,g^3\}}\epsilon_{g^l} & = & P_{\{1,g,g^2,g^3\}}, \\
        P_{\{1,g,g^2,g^3\}}\epsilon_{g^lx} & = & 0,
    \end{array}\right.
    $$ 
    \end{proposicao}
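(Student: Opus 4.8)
The plan is to mimic the structure of the nilpotent example of Section~\ref{section.nilpotent} as closely as possible, exploiting the fact that the present Hopf algebra $H$ is isomorphic \emph{as a coalgebra} to the one treated there. First I would observe that the equations \eqref{equality.Apar.0}, \eqref{equality.Apar.1}, \eqref{equality.Apar.2} are intrinsic to the coalgebra structure of $H$ together with the antipode $S$ (both of which agree with the nilpotent case), so exactly the same manipulations that produced \eqref{equation.glx.glxga.gl.glx} and \eqref{equation.gAxgL.gLxgA} go through verbatim here, giving \eqref{equation.copia.ZeroUm} and \eqref{equation.copia.ZeroDois}. The only structural difference is that $x^2 = g^2-1$ rather than $x^2=0$, which affects only the instance of \eqref{equality.Apar.2} with $h=g^lx$, $k=g^mx$; carrying through that substitution yields the new relation \eqref{equation.novidade}. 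Once these three families of relations are in hand, the proof is a bookkeeping exercise identical in form to Proposition~\ref{proposicao.example.A}.

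The next step is to note that, as in the previous section, the elements $P_X$ for $X\in\Px_1(G)$ (with $G=\Z_4$, $|G|=4$, so $|\Px_1(G)|=8$) are pairwise orthogonal central idempotents summing to $1_A$; this is immediate from Proposition~\ref{proposicao.A.as.PxA.sommation} and requires no recomputation. Then I would multiply each of \eqref{equation.copia.ZeroUm}, \eqref{equation.copia.ZeroDois}, \eqref{equation.novidade} by each $P_X$ and simplify, using the standard facts $\eps_gP_X=P_X$ if $g\in X$ and $\eps_gP_X=0$ otherwise, and $g^l P_X = P_X$ precisely when $g^l\in X$ (i.e.\ $P_X g^l = (\sum_{j:\,g^j\in X}\delta_{l,j})P_X$ as coefficients). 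To avoid doing all eight cases by hand I would invoke, exactly as in the nilpotent case, the multiplicative isomorphisms $g^k\cdot(-)\colon P_XA\to P_{g^kX}A$ valid whenever $g^{-k}\in X$ (from Corollary~\ref{cor:AP_X=AP_{gX}} and the discussion preceding Theorem~\ref{teorema.APX.simeq.APXG}): computing the relations on $P_{\{1,g\}}A$ gives those on $P_{\{1,g^3\}}A$ by applying $g^3\cdot(-)$, and computing them on $P_{\{1,g,g^2\}}A$ gives $P_{\{1,g,g^3\}}A$ and $P_{\{1,g^2,g^3\}}A$ by applying $g^3\cdot(-)$ and $g^2\cdot(-)$. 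The cases $X=\{1\}$, $X=\{1,g^2\}$ are handled directly, and $X=\{1,g,g^2,g^3\}=G$ is handled by Lemma~\ref{lema.hGammaG.epshGammaG} (or Theorem~\ref{teorema.thetaH.multiplicative.section.pH}), which forces $P_G\eps_{g^lx}=\eps(g^lx)P_G=0$.

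The one genuinely new piece of arithmetic is the appearance of $-P_X$ in place of $0$ for the squares $P_X(\eps_{g^{l_0}x})^2$: from \eqref{equation.novidade} with $\alpha=l$ one gets $(-1)^{0}(\eps_{g^{l+2}}-\eps_{g^l})^2 + (\eps_{g^lx})^2 = (\eps_{g^lx})^2$ only in the nilpotent case; in the present case, multiplying \eqref{equation.novidade} by the appropriate $P_X$ and using $x^2=g^2-1$ turns the left-hand idempotent combination into $\pm P_X$ times $(g^2\cdot1_A - 1_A)$-type terms. Concretely, pulling the relation $x^2=g^2-1$ back through $\eps_{(-)}$ one finds $\eps_{x^2}$-type expressions evaluate on $P_X$ to $\pm P_X$ depending on whether $g^2\in X$; the sign works out to $-1$ uniformly for the six subsets $X$ that support a nonzero $x$-term (namely $\{1\},\{1,g\},\{1,g^3\},\{1,g,g^2\},\{1,g,g^3\},\{1,g^2,g^3\}$), giving $P_X(\eps_{g^{l_0}x})^2=-P_X$ as claimed. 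I expect this sign/normalization check to be the main obstacle: one must be careful that $\eps$ is the \emph{convolution} idempotent $\eps_h=[h_1][S(h_2)]$, not an algebra map, so $\eps_{x^2}\neq(\eps_x)^2$ in general, and the correct identity to use is \eqref{equation.novidade} itself (which already encodes $x^2=g^2-1$) rather than a naive substitution. Everything else — orthogonality, centrality, that the listed $P_X$ exhaust a complete set of idempotents — is inherited directly from Proposition~\ref{proposicao.A.as.PxA.sommation} and the coalgebra isomorphism with the nilpotent case, so no further work is needed there.
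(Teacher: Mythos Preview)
Your proposal is correct and follows essentially the same route as the paper: the paper explicitly says ``Detailed computations will be left to the reader, as they are similar to the previous example,'' noting that the coalgebra isomorphism makes \eqref{equation.copia.ZeroUm} and \eqref{equation.copia.ZeroDois} carry over verbatim while the relation $x^2=g^2-1$ produces the new equation \eqref{equation.novidade}, after which one multiplies through by each $P_X$ and transports relations along the isomorphisms $g^k\cdot(-)$ exactly as in the nilpotent case. Your sketch is a faithful (and in places more explicit) rendering of this, and your cautionary remark that $(\eps_x)^2\neq\eps_{x^2}$, so one must extract the square relations from \eqref{equation.novidade} rather than by naive substitution, is exactly the right point to flag.
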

    By the last proposition, we have that 
    $$ \hspace{-1.5cm} (P_{\{1\}}A) \simeq (P_{\{1,g\}}A) \simeq (P_{\{1,g^3\}}A) \simeq (P_{\{1,g,g^2\}}A) \simeq (P_{\{1,g,g^3\}}A) \simeq (P_{\{1,g^2,g^3\}}A) \simeq \K[X]/\langle X^2+1\rangle,$$
    $$ (P_{\{1,g^2\}}A) \simeq \K[X], \ \ \textrm{and} \ \ (P_{\{1,g,g^2,g^3\}})A \simeq \K. $$
        We list below a $\K$-base for each direct summand of $A$:
    \begin{itemize}
        \item[i)] $\{P_{\{1\}}; \ P_{\{1\}}\epsilon_{x} \}$ is a basis of $P_{\{1\}}A$.

        \item[ii)] $\{P_{\{1,g\}}; \ P_{\{1,g\}}\epsilon_{gx} \}$ is a basis of $P_{\{1,g\}}A$.

        \item[iii)] $\{P_{\{1,g^3\}}; \ P_{\{1,g^3\}}\epsilon_{x} \}$ is a basis of $P_{\{1,g^3\}}A$.

        \item[iv)] $\{P_{\{1,g^2\}}(\epsilon_x)^n\}_{n\geq 0}$ is a basis of $P_{\{1,g^2\}}A$.

        \item[v)] $\{P_{\{1,g, g^2\}}; \ P_{\{1,g,g^2\}}\epsilon_{g^2x} \}$ is a basis of $P_{\{1,g,g^2\}}A$.

        \item[vi)] $\{P_{\{1,g,g^3\}}; \ P_{\{1,g,g^3\}}\epsilon_{gx} \}$ is a basis of $P_{\{1,g,g^3\}}A$.

        \item[vii)] $\{P_{\{1,g^2,g^3\}}; \ P_{\{1,g^2,g^3\}}\epsilon_{x} \}$ is a basis of $P_{\{1,g^2,g^3\}}A$.

        \item[viii)] $\{P_{\{1,g,g^2g^3\}}\}$ is a basis of $P_{\{1,g,g^2,g^3\}}A$.
    \end{itemize}
    As in the previous example, our aim is to find a basis for each component $\Gamma_X\Hpar \cong \underline{\Gamma_X A \# H}$, for all $X \in \mathcal{P}_1(G)$. In the present case, the idempotents $\Gamma_X$ are given by:
    $$ \Gamma_{\{1\}} = P_{\{1\}}; \ \ \Gamma_{\{1,g\}} = P_{\{1,g\}} + P_{\{1,g^3\}}; \ \ \Gamma_{\{1,g^2\}} = P_{\{1,g^2\}}; $$
    $$ \Gamma_{\{1,g,g^2\}} = P_{\{1,g,g^2\}} + P_{\{1,g,g^3\}} + P_{\{1,g^2,g^3\}}; \ \ \Gamma_{\{1,g,g^2,g^3\}} = P_{\{1,g,g^2,g^3\}}. $$
    Then, 
    \begin{itemize}
        \item[i)] $\{P_{\{1\}}; \ P_{\{1\}}\epsilon_{x} \}$ is a base of $\Gamma_{\{1\}}A = P_{\{1\}}A$. 

        \item[ii)] $\{P_{\{1,g\}}; \ P_{\{1,g\}}\epsilon_{gx}; \ P_{\{1,g^3\}}; \ P_{\{1,g^3\}}\epsilon_{x}\}$ is a base of $\Gamma_{\{1,g\}}A=P_{\{1,g\}}A\oplus P_{\{1,g^3\}}A$.

        \item[iii)] $\{P_{\{1,g^2\}}(\eps_x)^n\}_{n\geq 0}$ is a base of $\Gamma_{\{1,g^2\}}A = P_{\{1,g^2\}}A$.

        \item[iv)] $\{P_{\{1,g,g^2\}}; \ P_{\{1,g,g^2\}}\eps_{g^2x}; \ P_{\{1,g,g^3\}}; \ P_{\{1,g,g^3\}}\eps_{gx}; \ P_{\{1,g^2,g^3\}}; \ P_{\{1,g^2,g^3\}}\eps_{x}\}$ is a base of $\Gamma_{\{1,g,g^2\}}A = P_{\{1,g,g^2\}}A \oplus P_{\{1,g,g^3\}}A \oplus P_{\{1,g^2,g^3\}}A$.

        \item[v)] $\{P_{\{1,g,g^2,g^3\}}\}$ is a base of $\Gamma_{G}A = P_{G}A$, record that $G=\{1,g,g^2,g^3\}$.
    \end{itemize}

    We know that $H_{par} \cong \underline{A\# H}$ by \cite[Thm 4.8]{alves2015partial}. Once again, we are going to take $\alpha$ a basis of $\Gamma_XA$ and a basis $\beta=\{1,g,g^2,g^3,x,gx,g^2x,g^3x\}$ of $H$, so $\{a\# h \mid a \in \alpha \textrm{ and } h \in \beta\}$ generates $\underline{\Gamma_XA\# H} \simeq \Gamma_X\Hpar$. The fact that $a\# h = a\epsilon_{h_1}\x h_2$ will enable us to write down a basis for each component $\Gamma_X\Hpar$.

    \begin{itemize}
        \item [i)] For $\Gamma_{\{1\}} = P_{\{1\}}$, we have:
$$
\left\{\begin{array}{ccl}
    P_{\{1\}}\# 1 & = & P_{\{1\}}\x 1, \\
    P_{\{1\}}\# g & = & 0, \\
    P_{\{1\}}\# g^2 & = & 0, \\
    P_{\{1\}}\# g^3 & = & 0, \\
\end{array}
\right.
\left\{\begin{array}{ccl}
    P_{\{1\}}\epsilon_{x}\# 1 & = & P_{\{1\}}\epsilon_{x}\x 1, \\
    P_{\{1\}}\epsilon_{x}\# g & = & 0, \\
    P_{\{1\}}\epsilon_{x}\# g^2 & = & 0, \\
    P_{\{1\}}\epsilon_{x}\# g^3 & = & 0,
\end{array}\right.$$

$$ 
\left\{\begin{array}{ccl}
    P_{\{1\}}\# x & = & P_{\{1\}}\epsilon_{x}\x g + P_{\{1\}}\x x, \\
    P_{\{1\}}\# gx & = & 0, \\
    P_{\{1\}}\# g^2x & = & 0, \\
    P_{\{1\}}\# g^3x & = & P_{\{1\}}\epsilon_{x}\x 1, \\
\end{array}\right.
$$
$$ 
\left\{\begin{array}{ccl}
    P_{\{1\}}\epsilon_{x}\# x & = & -P_{\{1\}}\x g + P_{\{1\}}\epsilon_{x}\x x, \\
    P_{\{1\}}\epsilon_{x}\# gx & = & 0, \\
    P_{\{1\}}\epsilon_{x}\# g^2x & = & 0, \\
    P_{\{1\}}\epsilon_{x}\# g^3x & = & -P_{\{1\}}\x 1,
\end{array}\right.
$$

hence $\{P_{\{1\}}\# 1; \ P_{\{1\}}\epsilon_x\# 1; \ P_{\{1\}}\# x; \ P_{\{1\}}\epsilon_x\# x \}$ is a base of $\underline{\Gamma_{\{1\}}A\# H} = \underline{P_{\{1\}}A\# H}$. Since
$$ \begin{array}{ccc}
    \underline{\Gamma_{\{1\}}A\# H} & \larr & \Gamma_{\{1\}}\Hpar  \\
    a\# h & \lmap & a[h],
\end{array} $$
is an isomorphism, $\{P_{\{1\}}; \ P_{\{1\}}\epsilon_x; \ P_{\{1\}}[x]; \ P_{\{1\}}\epsilon_x[x] \}$ is a base of $\Gamma_{\{1\}}\Hpar$.
\end{itemize}
\begin{itemize}
    \item[i)] For $\Gamma_{\{1,g\}} = P_{\{1,g\}} + P_{\{1,g^3\}}$, 
    $$
\left\{\begin{array}{ccl}
    P_{\{1,g\}}\# 1 & = & P_{\{1,g\}}\x 1, \\
    P_{\{1,g\}}\# g & = & P_{\{1,g\}}\x g, \\
    P_{\{1,g\}}\# g^2 & = & 0, \\
    P_{\{1,g\}}\# g^3 & = & 0, \\
\end{array}
\right.
\left\{\begin{array}{ccl}
    P_{\{1,g\}}\epsilon_{gx}\# 1 & = & P_{\{1,g\}}\epsilon_{gx}\x 1, \\
    P_{\{1,g\}}\epsilon_{gx}\# g & = & P_{\{1,g\}}\epsilon_{gx}\x g, \\
    P_{\{1,g\}}\epsilon_{gx}\# g^2 & = & 0, \\
    P_{\{1,g\}}\epsilon_{gx}\# g^3 & = & 0,
\end{array}\right.$$
$$ 
\left\{\begin{array}{ccl}
    P_{\{1,g\}}\# x & = & P_{\{1,g\}}\x x , \\
    P_{\{1,g\}}\# gx & = & P_{\{1,g\}}\epsilon_{gx}\x g^2 + P_{\{1,g\}}\x gx, \\
    P_{\{1,g\}}\# g^2x & = & 0, \\
    P_{\{1,g\}}\# g^3x & = & -P_{\{1,g\}}\epsilon_{gx}\x 1, \\
\end{array}\right.
$$
$$ 
\left\{\begin{array}{ccl}
    P_{\{1,g\}}\epsilon_{gx}\# x & = & P_{\{1,g\}}\epsilon_{gx}\x x, \\
    P_{\{1,g\}}\epsilon_{gx}\# gx & = & -P_{\{1,g\}}\x g^2 + P_{\{1,g\}}\epsilon_{gx}\x gx, \\
    P_{\{1,g\}}\epsilon_{gx}\# g^2x & = & 0, \\
    P_{\{1,g\}}\epsilon_{gx}\# g^3x & = & P_{\{1,g\}}\x 1,
\end{array}\right.
$$
and now for $P_{\{1,g^3\}}$,
$$
\left\{\begin{array}{ccl}
    P_{\{1,g^3\}}\# 1 & = & P_{\{1,g^3\}}\x 1, \\
    P_{\{1,g^3\}}\# g & = & 0, \\
    P_{\{1,g^3\}}\# g^2 & = & 0, \\
    P_{\{1,g^3\}}\# g^3 & = & P_{\{1,g^3\}}\x g^3, \\
\end{array}
\right.
\left\{\begin{array}{ccl}
    P_{\{1,g^3\}}\epsilon_{x}\# 1 & = & P_{\{1,g^3\}}\epsilon_{x}\x 1, \\
    P_{\{1,g^3\}}\epsilon_{x}\# g & = & 0, \\
    P_{\{1,g^3\}}\epsilon_{x}\# g^2 & = & 0, \\
    P_{\{1,g^3\}}\epsilon_{x}\# g^3 & = & P_{\{1,g^3\}}\epsilon_{x}\x g^3,
\end{array}\right.$$
$$ 
\left\{\begin{array}{ccl}
    P_{\{1,g^3\}}\# x & = & P_{\{1,g^3\}}\epsilon_{x}\x g + P_{\{1,g^3\}}\x x, \\
    P_{\{1,g^3\}}\# gx & = & 0, \\
    P_{\{1,g^3\}}\# g^2x & = & -P_{\{1,g^3\}}\epsilon_{x}\x g^3, \\
    P_{\{1,g^3\}}\# g^3x & = & P_{\{1,g^3\}}\x g^3x, \\
\end{array}\right.
$$
$$ 
\left\{\begin{array}{ccl}
    P_{\{1,g^3\}}\epsilon_{x}\# x & = & -P_{\{1,g^3\}}\x g + P_{\{1,g^3\}}\epsilon_{x}\x x, \\
    P_{\{1,g^3\}}\epsilon_{x}\# gx & = & 0, \\
    P_{\{1,g^3\}}\epsilon_{x}\# g^2x & = & P_{\{1,g^3\}}\x g^3, \\
    P_{\{1,g^3\}}\epsilon_{x}\# g^3x & = & P_{\{1,g^3\}}\epsilon_{x}\x g^3x.
\end{array}\right.
$$
Hence,
$$\begin{array}{l}
    \Bigl\{P_{\{1,g\}}\# 1; P_{\{1,g\}}\# g; P_{\{1,g\}}\epsilon_{gx}\# 1; P_{\{1,g\}}\epsilon_{gx}\# g; \\
    P_{\{1,g\}}\# x; P_{\{1,g\}}\# gx; P_{\{1,g\}}\epsilon_{gx}\# x; P_{\{1,g\}}\epsilon_{gx}\# gx;  \\
    P_{\{1,g^3\}}\# 1; P_{\{1,g^3\}}\# g^3; P_{\{1,g^3\}}\epsilon_{x}\# 1; P_{\{1,g^3\}}\epsilon_{x}\# g^3; \\
    P_{\{1,g^3\}}\# x; P_{\{1,g^3\}}\# g^3x; P_{\{1,g^3\}}\epsilon_{x}\# x; P_{\{1,g^3\}}\epsilon_{x}\# g^3x \Bigr\} \\
\end{array}
$$
is a basis of $\underline{\Gamma_{\{1,g\}}\Hpar\# H}$, and 
$$\hspace{-0.7cm}\begin{array}{l}
    \Bigl\{P_{\{1,g\}}; P_{\{1,g\}}[g]; P_{\{1,g\}}\epsilon_{gx}; P_{\{1,g\}}\epsilon_{gx}[g]; P_{\{1,g\}}[x]; P_{\{1,g\}}[gx]; P_{\{1,g\}}\epsilon_{gx}[x]; P_{\{1,g\}}\epsilon_{gx}[gx];  \\
    P_{\{1,g^3\}}; P_{\{1,g^3\}}[g^3]; P_{\{1,g^3\}}\epsilon_{x}; P_{\{1,g^3\}}\epsilon_{x}[g^3]; P_{\{1,g^3\}}[x]; P_{\{1,g^3\}}[g^3x]; P_{\{1,g^3\}}\epsilon_{x}[x]; P_{\{1,g^3\}}\epsilon_{x}[g^3x]\Bigr\}
\end{array}$$
is a basis of $\Gamma_{\{1,g\}}\Hpar$.

\item[ii)] For $\Gamma_{\{1,g^2\}} = P_{\{1,g^2\}}$

$$ P_{\{1,g^2\}}(\epsilon_{x})^n\# g^l = \bigl(\delta_{l,0}+\delta_{l,2}\bigr)P_{\{1,g^2\}}(\epsilon_{x})^n\x g^l, $$
$$ 
\left\{\begin{array}{ccl}
    P_{\{1,g^2\}}\# x & = & P_{\{1,g^2\}}\epsilon_{x}\x g + P_{\{1,g^2\}}\x x, \\
    P_{\{1,g^2\}}\# gx & = & P_{\{1,g^2\}}\epsilon_{gx}\x g^2 + P_{\{1,g^2\}}\epsilon_{g}\x gx \ = \ P_{\{1,g^2\}}\epsilon_{x}\x g^2, \\
    P_{\{1,g^2\}}\# g^2x & = & P_{\{1,g^2\}}\epsilon_{g^2x}\x g^3 + P_{\{1,g^2\}}\epsilon_{g^2}\x g^2x \ = \ P_{\{1,g^2\}}\epsilon_{x}\x g^3 + P_{\{1,g^2\}}\x g^2x, \\
    P_{\{1,g^2\}}\# g^3x & = & P_{\{1,g^2\}}\epsilon_{g^3x}\x 1 + P_{\{1,g^2\}}\epsilon_{g^3}\x g^3x \ = \ P_{\{1,g^2\}}\epsilon_{x}\x 1, \\
\end{array}\right.
$$
$$ 
\left\{\begin{array}{ccl}
    P_{\{1,g^2\}}(\epsilon_{x})^n\# x & = & P_{\{1,g^2\}}(\epsilon_{x})^{n+1}\x g + P_{\{1,g^2\}}(\epsilon_{x})^n\x x, \\
    P_{\{1,g^2\}}(\epsilon_{x})^n\# gx & = & P_{\{1,g^2\}}(\epsilon_{x})^{n+1}\x g^2, \\
    P_{\{1,g^2\}}(\epsilon_{x})^n\# g^2x & = & P_{\{1,g^2\}}(\epsilon_{x})^{n+1}\x g^3 + P_{\{1,g^2\}}(\epsilon_{x})^n\x g^2x, \\
    P_{\{1,g^2\}}(\epsilon_{x})^n\# g^3x & = & P_{\{1,g^2\}}(\epsilon_{x})^{n+1}\x 1.
\end{array}\right.
$$
Hence,
$$\bigl\{P_{\{1,g^2\}}(\epsilon_x)^{n_1}\# 1; \ P_{\{1,g^2\}}(\epsilon_x)^{n_2}\# g^2; \ P_{\{1,g^2\}}(\epsilon_x)^{n_3}\# x; \ P_{\{1,g^2\}}(\epsilon_x)^{n_4}\# g^2x \bigr\}_{n_1,n_2,n_3,n_4 \geq 0}$$
is a basis of $\underline{\Gamma_{\{1,g^2\}}A\# H}$, and
$$\bigl\{P_{\{1,g^2\}}(\epsilon_x)^{n_1}; \ P_{\{1,g^2\}}(\epsilon_x)^{n_2}[g^2]; \ P_{\{1,g^2\}}(\epsilon_x)^{n_3}[x]; \ P_{\{1,g^2\}}(\epsilon_x)^{n_4}[g^2x] \bigr\}_{n_1,n_2,n_3,n_4\geq 0}$$
is a basis of $\Gamma_{\{1,g^2\}}\Hpar$.

\item[iii)] For $\Gamma_{\{1,g,g^2\}} = P_{\{1,g,g^2\}} + P_{\{1,g,g^3\}} + P_{\{1,g^2,g^3\}}$,

$$
\left\{\begin{array}{ccl}
    P_{\{1,g,g^2\}}\# 1 & = & P_{\{1,g,g^2\}}\x 1, \\
    P_{\{1,g,g^2\}}\# g & = & P_{\{1,g,g^2\}}\x g, \\
    P_{\{1,g,g^2\}}\# g^2 & = & P_{\{1,g,g^2\}}\x g^2, \\
    P_{\{1,g,g^2\}}\# g^3 & = & 0, \\
\end{array}
\right.
\left\{\begin{array}{ccl}
    P_{\{1,g,g^2\}}\epsilon_{g^2x}\# 1 & = & P_{\{1,g,g^2\}}\epsilon_{g^2x}\x 1, \\
    P_{\{1,g,g^2\}}\epsilon_{g^2x}\# g & = & P_{\{1,g,g^2\}}\epsilon_{g^2x}\x g, \\
    P_{\{1,g,g^2\}}\epsilon_{g^2x}\# g^2 & = & P_{\{1,g,g^2\}}\epsilon_{g^2x}\x g^2, \\
    P_{\{1,g,g^2\}}\epsilon_{g^2x}\# g^3 & = & 0,
\end{array}\right.$$
$$ 
\left\{\begin{array}{ccl}
    P_{\{1,g,g^2\}}\# x & = & P_{\{1,g,g^2\}}\x x , \\
    P_{\{1,g,g^2\}}\# gx & = & P_{\{1,g,g^2\}}\x gx, \\
    P_{\{1,g,g^2\}}\# g^2x & = & P_{\{1,g,g^2\}}\epsilon_{g^2x}\x g^3 + P_{\{1,g,g^2\}}\x g^2x , \\
    P_{\{1,g,g^2\}}\# g^3x & = & P_{\{1,g,g^2\}}\epsilon_{g^2x}\x 1, \\
\end{array}\right.
$$
$$ 
\left\{\begin{array}{ccl}
    P_{\{1,g,g^2\}}\epsilon_{g^2x}\# x & = & P_{\{1,g,g^2\}}\epsilon_{g^2x}\x x , \\
    P_{\{1,g,g^2\}}\epsilon_{g^2x}\# gx & = & P_{\{1,g,g^2\}}\epsilon_{g^2x}\x gx, \\
    P_{\{1,g,g^2\}}\epsilon_{g^2x}\# g^2x & = & -P_{\{1,g,g^2\}}\x g^3 + P_{\{1,g,g^2\}}\eps_{g^2x}\x g^2x, \\
    P_{\{1,g,g^2\}}\epsilon_{g^2x}\# g^3x & = & -P_{\{1,g,g^2\}}\x 1, \\
\end{array}\right.
$$
for $P_{\{1,g,g^3\}}$
$$
\left\{\begin{array}{ccl}
    P_{\{1,g,g^3\}}\# 1 & = & P_{\{1,g,g^3\}}\x 1, \\
    P_{\{1,g,g^3\}}\# g & = & P_{\{1,g,g^3\}}\x g, \\
    P_{\{1,g,g^3\}}\# g^2 & = & 0, \\
    P_{\{1,g,g^3\}}\# g^3 & = & P_{\{1,g,g^3\}}\x g^3, \\
\end{array}
\right.
\left\{\begin{array}{ccl}
    P_{\{1,g,g^3\}}\epsilon_{gx}\# 1 & = & P_{\{1,g,g^3\}}\epsilon_{gx}\x 1, \\
    P_{\{1,g,g^3\}}\epsilon_{gx}\# g & = & P_{\{1,g,g^3\}}\epsilon_{gx}\x g, \\
    P_{\{1,g,g^3\}}\epsilon_{gx}\# g^2 & = & 0, \\
    P_{\{1,g,g^3\}}\epsilon_{gx}\# g^3 & = & P_{\{1,g,g^3\}}\epsilon_{gx}\x g^3,
\end{array}\right.$$
$$ 
\left\{\begin{array}{ccl}
    P_{\{1,g,g^3\}}\# x & = & P_{\{1,g,g^3\}}\x x , \\
    P_{\{1,g,g^3\}}\# gx & = & P_{\{1,g,g^3\}}\epsilon_{gx}\x g^2 + P_{\{1,g,g^2\}}\x gx, \\
    P_{\{1,g,g^3\}}\# g^2x & = & P_{\{1,g,g^3\}}\epsilon_{gx}\x g^3, \\
    P_{\{1,g,g^3\}}\# g^3x & = & P_{\{1,g,g^3\}}\x g^3x, \\
\end{array}\right.
$$
$$ 
\left\{\begin{array}{ccl}
    P_{\{1,g,g^3\}}\epsilon_{gx}\# x & = & P_{\{1,g,g^3\}}\epsilon_{gx}\x x , \\
    P_{\{1,g,g^3\}}\epsilon_{gx}\# gx & = & -P_{\{1,g,g^3\}}\x g^2 + P_{\{1,g,g^2\}}\epsilon_{gx}\x gx, \\
    P_{\{1,g,g^3\}}\epsilon_{gx}\# g^2x & = &  -P_{\{1,g,g^3\}}\x g^3, \\
    P_{\{1,g,g^3\}}\epsilon_{gx}\# g^3x & = & P_{\{1,g,g^3\}}\epsilon_{gx}\x g^3x, \\
\end{array}\right.
$$
and for $P_{\{1,g^2,g^3\}}$
$$
\left\{\begin{array}{ccl}
    P_{\{1,g^2,g^3\}}\# 1 & = & P_{\{1,g^2,g^3\}}\x 1, \\
    P_{\{1,g^2,g^3\}}\# g & = & 0, \\
    P_{\{1,g^2,g^3\}}\# g^2 & = & P_{\{1,g^2,g^3\}}\x g^2, \\
    P_{\{1,g^2,g^3\}}\# g^3 & = & P_{\{1,g^2,g^3\}}\x g^3, \\
\end{array}
\right.
\left\{\begin{array}{ccl}
    P_{\{1,g^2,g^3\}}\epsilon_{x}\# 1 & = & P_{\{1,g^2,g^3\}}\epsilon_{x}\x 1, \\
    P_{\{1,g^2,g^3\}}\epsilon_{x}\# g & = & 0, \\
    P_{\{1,g^2,g^3\}}\epsilon_{x}\# g^2 & = & P_{\{1,g^2,g^3\}}\epsilon_{x}\x g^2, \\
    P_{\{1,g^2,g^3\}}\epsilon_{x}\# g^3 & = & P_{\{1,g^2,g^3\}}\epsilon_{x}\x g^3, \\
\end{array}
\right.$$
$$ 
\left\{\begin{array}{ccl}
    P_{\{1,g^2,g^3\}}\# x & = & P_{\{1,g^2,g^3\}}\epsilon_{x}\x g +P_{\{1,g^2,g^3\}}\x x, \\
    P_{\{1,g^2,g^3\}}\# gx & = & P_{\{1,g^2,g^3\}}\epsilon_{x}\x g^2, \\
    P_{\{1,g^2,g^3\}}\# g^2x & = & P_{\{1,g^2,g^3\}}\x g^2x, \\
    P_{\{1,g^2,g^3\}}\# g^3x & = & P_{\{1,g^2,g^3\}}\x g^3x, \\
\end{array}\right.
$$
$$ 
\left\{\begin{array}{ccl}
    P_{\{1,g^2,g^3\}}\epsilon_{x}\# x & = & -P_{\{1,g^2,g^3\}}\x g + P_{\{1,g^2,g^3\}}\epsilon_{x}\x x, \\
    P_{\{1,g^2,g^3\}}\epsilon_{x}\# gx & = & -P_{\{1,g^2,g^3\}}\x g^2, \\
    P_{\{1,g^2,g^3\}}\epsilon_{x}\# g^2x & = & P_{\{1,g^2,g^3\}}\epsilon_{x}\x g^2x, \\
    P_{\{1,g^2,g^3\}}\epsilon_{x}\# g^3x & = & P_{\{1,g^2,g^3\}}\epsilon_{x}\x g^3x. \\
\end{array}\right.
$$
Finally,
$$ \hspace{-2.5cm}\begin{array}{l}
    \Bigl\{P_{\{1,g,g^2\}}\# 1; \ P_{\{1,g,g^2\}}\# g; \ P_{\{1,g,g^2\}}\# g^2; \ P_{\{1,g,g^2\}}\epsilon_{g^2x}\# 1; 
    P_{\{1,g,g^2\}}\epsilon_{g^2x}\# g; \ P_{\{1,g,g^2\}}\epsilon_{g^2x}\# g^2; \\
    P_{\{1,g,g^2\}}\# x; \ P_{\{1,g,g^2\}}\# gx; \ P_{\{1,g,g^2\}}\# g^2x; \ P_{\{1,g,g^2\}}\epsilon_{g^2x}\# x; \ P_{\{1,g,g^2\}}\epsilon_{g^2x}\# gx; \ P_{\{1,g,g^2\}}\epsilon_{g^2x}\# g^2x; \\
    P_{\{1,g,g^3\}}\# 1; \ P_{\{1,g,g^3\}}\# g; \ P_{\{1,g,g^3\}}\# g^3; \ P_{\{1,g,g^3\}}\epsilon_{gx}\# 1; 
    P_{\{1,g,g^3\}}\epsilon_{gx}\# g; \ P_{\{1,g,g^3\}}\epsilon_{gx}\# g^3; \\
    P_{\{1,g,g^3\}}\# x; \ P_{\{1,g,g^3\}}\# gx; \ P_{\{1,g,g^2\}}\# g^3x; \ P_{\{1,g,g^2\}}\epsilon_{gx}\# x; \ P_{\{1,g,g^3\}}\epsilon_{gx}\# gx; \ P_{\{1,g,g^3\}}\epsilon_{gx}\# g^3x; \\
    P_{\{1,g^2,g^3\}}\# 1; \ P_{\{1,g^2,g^3\}}\# g^2; \ P_{\{1,g,g^3\}}\# g^3; \ P_{\{1,g^2,g^3\}}\epsilon_{x}\# 1; 
    P_{\{1,g^2,g^3\}}\epsilon_{x}\# g^2; \ P_{\{1,g^2,g^3\}}\epsilon_{x}\# g^3; \\
    P_{\{1,g^2,g^3\}}\# x; \ P_{\{1,g^2,g^3\}}\# g^2x; \ P_{\{1,g^2,g^3\}}\# g^3x; \ P_{\{1,g,g^2\}}\epsilon_{x}\# x; \ P_{\{1,g^2,g^3\}}\epsilon_{x}\# g^2x; \ P_{\{1,g^2,g^3\}}\epsilon_{x}\# g^3x \Bigr\}
    \end{array}$$
    is a basis of $\underline{\Gamma_{\{1,g,g^2\}}A\# H}$, and 
    $$ \hspace{-2.5cm}\begin{array}{l}
    \Bigl\{P_{\{1,g,g^2\}}; \ P_{\{1,g,g^2\}}[g]; \ P_{\{1,g,g^2\}}[g^2]; \ P_{\{1,g,g^2\}}\epsilon_{g^2x}; \
    P_{\{1,g,g^2\}}\epsilon_{g^2x}[g]; \ P_{\{1,g,g^2\}}\epsilon_{g^2x}[g^2]; \\
    P_{\{1,g,g^2\}}[x]; \ P_{\{1,g,g^2\}}[gx]; \ P_{\{1,g,g^2\}}[g^2x]; \ P_{\{1,g,g^2\}}\epsilon_{g^2x}[x]; \ P_{\{1,g,g^2\}}\epsilon_{g^2x}[gx]; \ P_{\{1,g,g^2\}}\epsilon_{g^2x}[g^2x]; \\
    P_{\{1,g,g^3\}}; \ P_{\{1,g,g^3\}}[g]; \ P_{\{1,g,g^3\}}[g^3]; \ P_{\{1,g,g^3\}}\epsilon_{gx}; \ P_{\{1,g,g^3\}}\epsilon_{gx}[g]; \ P_{\{1,g,g^3\}}\epsilon_{gx}[g^3]; \\
    P_{\{1,g,g^3\}}[x]; \ P_{\{1,g,g^3\}}[gx]; \ P_{\{1,g,g^2\}}[g^3x]; \ P_{\{1,g,g^2\}}\epsilon_{gx}[x]; \ P_{\{1,g,g^3\}}\epsilon_{gx}[gx]; \ P_{\{1,g,g^3\}}\epsilon_{gx}[g^3x]; \\
    P_{\{1,g^2,g^3\}}; \ P_{\{1,g^2,g^3\}}[g^2]; \ P_{\{1,g,g^3\}}[g^3]; \ P_{\{1,g^2,g^3\}}\epsilon_{x}; P_{\{1,g^2,g^3\}}\epsilon_{x}[g^2]; \ P_{\{1,g^2,g^3\}}\epsilon_{x}[g^3]; \\
    P_{\{1,g^2,g^3\}}[x]; \ P_{\{1,g^2,g^3\}}[g^2x]; \ P_{\{1,g^2,g^3\}}[g^3x]; \ P_{\{1,g,g^2\}}\epsilon_{x}[x]; \ P_{\{1,g^2,g^3\}}\epsilon_{x}[g^2x]; \ P_{\{1,g^2,g^3\}}\epsilon_{x}[g^3x] \Bigr\}
    \end{array}$$
    is a basis of $\Gamma_{\{1,g,g^2\}}\Hpar$.

\item[iv)] For $\Gamma_G = \Gamma_{\{1,g,g^2,g^3\}} = P_{\{1,g,g^2,g^3\}}$ however, for this special central idempotent we already know that $\Gamma_{\{1,g,g^2,g^3\}}\Hpar \cong H$, by Theorem \ref{teorema.thetaH.multiplicative.section.pH}, and also 
$$\{P_G; \ P_G[g]; \ P_G[g^2]; \ P_G[g^3]; \ P_G[x]; \ P_G[gx]; \ P_G[g^2x]; \ P_G[g^3x];\}$$
is the base of $\Gamma_G\Hpar$.

\end{itemize}
Using the above basis of each component of $\Hpar$, we are able to list a basis for $\Hpar$, as follows:
$$
\begin{array}{l}
    \Bigl\{ P_{\{1\}}(\epsilon_{x})^\alpha[x^\beta] \mid \alpha,\beta \in \{0,1\} \Bigr\} \\
     \bigcup \Bigl\{P_{\{1,g\}}(\epsilon_{gx})^{\alpha_1}[g^{l_1}x^{\beta_1}]; \ P_{\{1,g^3\}}(\epsilon_x)^{\alpha_2}[g^{l_2}x^{\beta_2}] \mid \alpha_1,\alpha_2,\beta_1,\beta_2\in \{0,1\}; l_1=0,1; l_2=0,3 \Bigr\} \\
    \bigcup \Bigl\{ P_{\{1,g^2\}}(\epsilon_x)^n[g^lx^\beta] \mid n\geq 0; \beta=0,1; l=0,2 \Bigr\}  \\
    \bigcup \Bigl\{ P_{\{1,g,g^2\}}(\epsilon_{g^2x})^{\alpha_1}[g^{l_1}x^{\beta_1}]; \ P_{\{1,g,g^3\}}(\epsilon_{gx})^{\alpha_2}[g^{l_2}x^{\beta_2}]; \ P_{\{1,g^2,g^3\}}(\epsilon_{x})^{\alpha_3}[g^{l_3}x^{\beta_3}] \mid \\
    \hspace{2.5cm} \alpha_1,\alpha_2,\alpha_3,\beta_1,\beta_2,\beta_3 \in \{0,1\};  l_1=0,1,2; l_2=0,1,3; l_3=0,2,3 \Bigr\} \\
    \bigcup \biggl\{P_G; \ P_G[g]; \ P_G[g^2] \ P_G[g^3]; \ P_G[x] \ P_G[gx]; \ P_G[g^2x] \ P_G[g^3x] \biggr\}.
\end{array}
$$

\bibliographystyle{plain}
\bibliography{refs}

\begin{thebibliography}{10}

\bibitem{alves2010enveloping}
Marcelo Muniz~S Alves and Eliezer Batista.
\newblock Enveloping actions for partial {Hopf} actions.
\newblock {\em Communications in Algebra}, 38(8):2872--2902, 2010.

\bibitem{alves2021partial}
Marcelo Muniz~S Alves, Eliezer Batista, Felipe Castro, Glauber Quadros, and
  Joost Vercruysse.
\newblock Partial corepresentations of {Hopf} algebras.
\newblock {\em Journal of algebra}, 577:74--135, 2021.

\bibitem{alves2015partial}
Marcelo Muniz~S Alves, Eliezer Batista, and Joost Vercruysse.
\newblock Partial representations of {Hopf} algebras.
\newblock {\em Journal of algebra}, 426:137--187, 2015.

\bibitem{caenepeel2004galois}
S.~Caenepeel and E.~de~Groot.
\newblock Corings applied to partial {Galois} theory.
\newblock In {\em KALLA SL, CHAWLA M M. Proceedings of the International
  Conference on Mathematics and Applications. Kuwait: Kuwait University},
  volume 117, 2005.

\bibitem{caenepeel2008partial}
Stefaan Caenepeel and Kris Janssen.
\newblock Partial (co) actions of {Hopf} algebras and partial {Hopf}-galois
  theory.
\newblock {\em Communications in Algebra}, 36(8):2923--2946, 2008.

\bibitem{DE05}
M.~Dokuchaev and R.~Exel.
\newblock Associativity of crossed products by partial actions, enveloping
  actions and partial representations.
\newblock {\em Trans. Amer. Math. Soc.}, 357(5):1931--1952, 2005.

\bibitem{piccione}
Michael Dokuchaev, Ruy Exel, and Paolo Piccione.
\newblock Partial representations and partial group algebras.
\newblock {\em J. Algebra}, 226(1):505--532, 2000.

\bibitem{ferrero2007}
Michael Dokuchaev, Miguel Ferrero, and Antonio Paques.
\newblock Partial actions and {Galois} theory.
\newblock {\em J. Pure Appl. Algebra}, 208(1):77--87, 2007.

\bibitem{DS16}
Michael Dokuchaev and J~Simon.
\newblock Isomorphisms of partial group rings.
\newblock {\em Communications in Algebra}, 44(2):680--696, 2016.

\bibitem{exel1994circle}
Ruy Exel.
\newblock Circle actions on {C}*-algebras, partial automorphisms and a
  generalized {Pimsner-Voiculescu} exact sequence.
\newblock {\em J. Funct. Analysis}, 122:361--401, 1994.

\bibitem{E98}
Ruy Exel.
\newblock Partial actions of groups and actions of inverse semigroups.
\newblock {\em Proc. Amer. Math. Soc.}, 126(12):3481--3494, 1998.

\bibitem{arthur2024partial}
Tiago~Luiz Ferrazza, William Hautekiet, and Arthur~Alves Neto.
\newblock Partial representations of connected and smash product {Hopf}
  algebras.
\newblock arXiv preprint arXiv:2404.17303, 2024.

\bibitem{krop2006finite}
Leonid Krop and David~E Radford.
\newblock Finite-dimensional {Hopf} algebras of rank one in characteristic
  zero.
\newblock {\em Journal of Algebra}, 302(1):214--230, 2006.

\bibitem{montgomery1993hopf}
Susan Montgomery.
\newblock {\em {Hopf} algebras and their actions on rings}.
\newblock Number~82. American Mathematical Soc., 1993.

\bibitem{DavidERadford}
David~E Radford.
\newblock {\em {Hopf} algebras}, volume~49.
\newblock World Scientific, 2012.

\end{thebibliography}

\end{document}